\theoremstyle{plain}
\newtheorem{theorem}{Theorem}[section]
\newtheorem{corollary}[theorem]{Corollary}
\newtheorem{lemma}[theorem]{Lemma}
\newtheorem{proposition}[theorem]{Proposition}
\newtheorem{fact}[theorem]{Fact}
\newtheorem*{theorem*}{Theorem}
\newcounter{MainCorollaryCounter}
\newtheorem{MainCorollary}[MainCorollaryCounter]{Corollary}
\newcounter{MainTheoremCounter}
\newtheorem{MainTheorem}[MainTheoremCounter]{Theorem}
\theoremstyle{definition}
\newtheorem{definition}[theorem]{Definition}
\newtheorem*{setupSRF}{Setup for Section~\ref{sec.simpleRankFour}}
\newtheorem*{setupCA}{Setup for Section~\ref{sec.corollaryA}}
\newtheorem*{conjecture*}{Conjecture}
\newtheorem{remark}[theorem]{Remark}
\newcommand{\modu}[1]{\overline{#1}}
\newcommand{\textdef}[1]{\textit{#1}}
\newcommand{\orbit}{\mathcal{O}}
\DeclareMathOperator{\rk}{rk} 
\DeclareMathOperator{\m}{m}
\DeclareMathOperator{\gl}{GL} 
\DeclareMathOperator{\aff}{AGL}
\DeclareMathOperator{\psl}{PSL} 
\DeclareMathOperator{\pssl}{(P)SL} 
\DeclareMathOperator{\proj}{\mathbb{P}} 
\DeclareMathOperator{\emorph}{End} 
\DeclareMathOperator{\points}{\mathcal{P}} 
\DeclareMathOperator{\lines}{\mathcal{L}}
\begin{document}
\title[Groups of Morley rank $4$]{Groups of Morley rank $4$}
\author{Joshua Wiscons}
\address{Department of Mathematics\\
Hamilton College\\
Clinton, NY 13323, USA}
\email{jwiscons@hamilton.edu}
\thanks{This material is based upon work supported by the National Science Foundation under grant No. OISE-1064446.}
\keywords{finite Morley rank, simple group, generic transitivity}
\subjclass[2010]{Primary 03C60; Secondary 20B22}
\begin{abstract}
We show that any simple group of Morley rank $4$ must be a bad group with no proper definable subgroups of rank larger than $1$. We also give an application to groups acting on sets of Morley rank $2$.
\end{abstract}
\maketitle

\section{Introduction}
This note investigates the structure of groups of Morley rank $4$; in what follows, rank always refers to Morley rank. Our motivation is two-fold. On one hand, a ``classification'' of groups of rank $4$ has direct applications to groups acting on sets of small rank. This was our initial point of view, and it is responsible for our inclusion of Corollary~\ref{cor.B}. More precisely, we are interested in applying knowledge of groups of rank $4$ to the exploration of generically sharply $n$-transitive actions on sets of rank $2$. An effort to understand these actions was initiated by Gropp in \cite{GrU92}, and their consideration sits inside of a larger project, started by Borovik and Cherlin  \cite{BoCh08}, to find a natural bound on the degree of generic transitivity for definably primitive permutation groups of finite Morley rank that depends only on the rank of the set being acted upon.

Our other reason for studying groups of rank $4$ is to add ever-so-slightly to the evidence that the Algebraicity Conjecture may hold for groups with involutions. The Algebraicity Conjecture posits that every infinite simple group of finite Morley rank is isomorphic to an algebraic group over an algebraically closed field, but the unresolved possibility of a so-called ``bad group'' of rank $3$ leaves the conjecture on shaky ground. However, it is known that any simple group of rank at most $3$ \emph{that has an involution} is indeed algebraic, and our main result extends this to rank $4$. But, with regards to the Algebraicity Conjecture, it is not so much our main result, but rather the method of proof, that is important. The method, which exploits the geometry of involutions in a potential counterexample, has previously illuminated various other difficult configurations, e.g. the structure of bad groups and the structure of sharply $2$-transitive groups, and as such, our work on this problem seems to suggest that a study of this geometry may be useful in advancing the general theory of groups with low Pr\"ufer $2$-rank. 

\begin{MainTheorem}\label{thm.A}
Any simple group of Morley rank $4$  is a bad group with no proper definable subgroups of rank larger than $1$. In particular, there are no simple groups of Morley rank $4$ with involutions.
\end{MainTheorem}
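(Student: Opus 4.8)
The plan is to let $G$ be an infinite simple group with $\rk G = 4$. Since the connected component $G^{\circ}$ is a nontrivial definable normal subgroup, $G$ is connected, and being nonabelian simple it is nonsolvable. Because $\rk H = \rk H^{\circ}$ for every definable $H$, it suffices to prove that every \emph{proper definable connected} subgroup has rank at most $1$; the ``bad group'' conclusion is then automatic, as such subgroups are abelian by Reineke and $G$ is nonsolvable. I would organize everything around the rank of a Borel subgroup $B$ (a maximal connected definable solvable subgroup): since a connected group of rank $\le 1$ is abelian and one of rank $2$ is solvable (Cherlin), once I know $\rk B \le 1$ for every Borel, any proper definable connected $H$ is either solvable---hence contained in a Borel, so of rank $\le 1$---or nonsolvable, in which case $\rk H \ge 3$ and thus $\rk H = 3$. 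So the theorem reduces to excluding proper connected subgroups of rank $3$ and Borels of rank $2$.

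The rank-$3$ case is clean. If $H < G$ is definable, connected, with $\rk H = 3$, then $\Omega := G/H$ has $\rk \Omega = 1$ and, since $G$ and $H$ are connected, degree $1$; thus $\Omega$ is strongly minimal. The kernel of the transitive action of $G$ on $\Omega$ is the core $\bigcap_{g} H^{g}$, a definable normal subgroup properly contained in $G$, hence trivial by simplicity. So $G$ acts faithfully and transitively on a strongly minimal set, and Hrushovski's theorem on such actions forces $\rk G \le 3$, a contradiction. This excludes all proper connected subgroups of rank $3$; in particular, for any proper definable connected $H$ the connected normalizer $N_{G}(H)^{\circ}$ can have rank neither $3$ (it would be a proper rank-$3$ subgroup) nor $4$ (it would equal $G$, forcing $H \trianglelefteq G$).

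The heart of the proof, and the step I expect to be the main obstacle, is excluding a Borel $B$ of rank $2$. Such a $B$ is connected solvable of rank $2$, and by the previous paragraph $N_{G}(B)^{\circ} = B$, so $G$ acts faithfully and transitively on the rank-$2$ set $G/B$; comparing ranks ($\rk G = 4 = 2\,\rk(G/B)$) the action is generically $2$-transitive with finite generic two-point stabilizer. I would split on the structure of $B$. If $B$ is nonnilpotent, then writing $B = U \rtimes T$ and applying Zilber's field theorem yields an interpretable algebraically closed field $K$ with $U \cong K^{+}$ and $T \hookrightarrow K^{\times}$; this produces an involution in $G$ (from $-1 \in K^{\times}$ when $\charac K \ne 2$, and from the $2$-torsion of $U \cong K^{+}$ when $\charac K = 2$), and one then uses an inverted rank-$1$ subgroup together with a conjugate to build a copy of $\psl_{2}(K)$, a proper connected subgroup of rank $3$, contradicting the rank-$3$ exclusion. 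If instead $B$ is nilpotent, no field is handed to us, and here one must argue by genericity: analyzing the intersections of the conjugates of $B$ and the rank of $\bigcup_{g} B^{g}$ through the theory of generous subgroups and the generic $2$-transitive action on $G/B$, to reach a numerical or structural contradiction. This nilpotent sub-case, where no involution is available, is the delicate point.

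Finally, granting that all Borels have rank $1$, $G$ is a bad group whose proper definable connected subgroups all have rank $\le 1$, and it remains to see $G$ has no involutions. Suppose $i$ were an involution. Then $i$ is noncentral (as $Z(G) = 1$), so $C_{G}(i)^{\circ}$ is abelian of rank $\le 1$ and $i^{G}$ has rank $\ge 3$. The geometry of involutions in such a group---a generic pair of involutions generating an infinite dihedral group inverts a connected rank-$1$ subgroup, and the resulting dihedral and $\psl_{2}$ configurations---forces either a proper connected subgroup of rank $3$ or the identification $G \cong \psl_{2}(K)$, both impossible since $G$ has no proper rank-$3$ subgroup and $\rk G = 4 \ne 3$. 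This is the known absence of involutions in bad groups (Cherlin), and it yields the stated corollary, consistently with there being no simple algebraic group of dimension $4$.
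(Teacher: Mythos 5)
Your reduction is sound and matches the paper's skeleton: rank-$3$ subgroups are excluded exactly as in the paper (via Hrushovski, Fact~\ref{fact.Hru}), and the theorem does come down to excluding a Borel subgroup $B$ of rank $2$. But that exclusion is precisely where your proposal stops being a proof. In the nonnilpotent case, your plan is to ``use an inverted rank-$1$ subgroup together with a conjugate to build a copy of $\psl_2(K)$.'' No such construction is available, and this is the whole difficulty: from a nonnilpotent rank-$2$ Borel one gets a field and involutions, but there is no identification theorem that manufactures $\psl_2(K)$ inside $G$ from this data (if there were, bad groups of rank $3$ would not be an open problem either). The paper's actual route is much longer: Borels are self-normalizing (Proposition~\ref{prop.SimpleRankTwoSubRankFour}); Brauer--Fowler (Fact~\ref{fact.BrauerFowler}, Lemma~\ref{lem.BF}) produces a Borel $B$ with nontrivial center; the action of $G$ on $B\backslash G$ is then genuinely $2$-transitive (Corollary~\ref{cor.SimpleMaxCenterRankFour}, which rests on the primitivity lemmas --- note that your claim that generic $2$-transitivity follows from the rank count $4 = 2\cdot 2$ alone is also unjustified, since a priori all intersections $B\cap B^g$ could be infinite); the $2$-point stabilizers are finite but \emph{nontrivial} and toral (Lemma~\ref{lem.TwoTransTwoPointStab}, where sharp $2$-transitivity is killed by \cite[Proposition~11.71]{BoNe94} and simplicity); $G$ has odd type, $2$-rank $1$, strongly real elements lie in a unique Borel (Lemma~\ref{lem.stronglyRealBorel}); and the final contradiction is an incidence-geometry rank count on the set of involutions (the lines meeting a fixed line and the lines missing it both have rank $2$). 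None of this is replaceable by the one-line $\psl_2$ claim.

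In the nilpotent case you explicitly concede you have no argument (``one must argue by genericity \dots\ to reach a numerical or structural contradiction''), so this is an admitted gap; but it is also worth noting that your intuition about it is inverted. You call it ``the delicate point, where no involution is available,'' whereas in the paper it is the easier case and involutions \emph{are} available there: a nilpotent rank-$2$ subgroup has nontrivial center, so Corollary~\ref{cor.SimpleMaxCenterRankFour} already yields a $2$-transitive action and hence involutions, which contradicts badness of $G$ if all rank-$2$ subgroups were nilpotent (\cite[Theorem~13.3]{BoNe94}); a generosity argument comparing a self-centralizing maximal torus with the putative nilpotent subgroup then eliminates nilpotent rank-$2$ subgroups altogether (Proposition~\ref{prop.SimpleRankTwoSubRankFour}). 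So the proposal correctly locates the hard step but supplies neither of the two sub-arguments needed for it: the nonnilpotent branch rests on a construction that does not exist, and the nilpotent branch is left open.
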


We stress that the proof of Theorem~\ref{thm.A} is relatively elementary with the main tool being Hrushovski's characterization of groups acting transitively on strongly minimal sets. Although we do employ a handful of other not so trivial results (see Section~ \ref{sec.prelim} for example), we do not use the classification of the even and mixed type simple groups nor do we use the theory of minimal simple groups, e.g. \cite[Th\'eor\`eme-Synth\`ese]{DeA07}. 

Our approach is as follows. Let $G$ be a simple group of Morley rank $4$. By Fact~\ref{fact.Hru}, a  proper definable connected subgroup $B<G$ has rank at most $2$, and we ``only'' need to show that $\rk B = 2$ gives rise to a contradiction. If such a $B$ exists,  then the action of $G$ on the right cosets of $B$ is virtually definably primitive, and we  use this to prove that, for an appropriate choice of $B$, the action is quite close to being sharply $2$-transitive, see the remarks preceding Lemma~\ref{lem.TwoTransTwoPointStab}. We will have seen that $B$ contains involutions, so in the case that the action is honestly sharply $2$-transitive, \cite[Proposition~11.71]{BoNe94} implies that $B$ has a normal complement, contradicting the simplicity of $G$. When sharp $2$-transitivity is out of reach, it is only barely out of reach, and we exploit a similar geometrical approach as in the proof of  \cite[Proposition~11.71]{BoNe94}. 

Theorem~\ref{thm.A}  yields the following corollary delineating the structure of groups of rank $4$ according to the rank of their Fitting subgroup. The \textdef{Fitting subgroup} of a group $G$ is the subgroup $F(G)$ generated by all normal nilpotent subgroups. Recall that a group is said to be \textdef{quasisimple} if it is perfect, and modulo its center, it is simple. Also, for a group $G$, we write $G=A*B$ if $A$ and $B$ are commuting subgroups that generate $G$, i.e. $G$ is the \textdef{central product} of $A$ and $B$.

\begin{MainCorollary}\label{cor.A}
Let $G$ be a connected group of Morley rank $4$.
\begin{enumerate}
\item If $\rk F(G) \ge 2$, then $G$ is solvable.
\item If $\rk F(G) = 1$, then either
\begin{enumerate}
\item $G$ is a quasisimple bad group, or
\item $G = F(G) * Q$ for some quasisimple subgroup $Q$ of rank $3$.
\end{enumerate}
\item If $\rk F(G) = 0$, then either
\begin{enumerate}
\item $G$ is a quasisimple bad group, or
\item $G$ has a normal quasisimple bad subgroup of rank $3$. 
\end{enumerate}
\end{enumerate}
In particular, $\rk F(G) \ge 1$ whenever $G$ has an involution.
\end{MainCorollary}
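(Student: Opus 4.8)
The plan is to derive Corollary~\ref{cor.A} from Theorem~\ref{thm.A} by analyzing a connected group $G$ of rank $4$ according to the rank of its Fitting subgroup, which (since $G$ is connected, $F(G)$ is a proper definable connected subgroup, so $\rk F(G) \le 3$) leaves only the four cases $\rk F(G) \in \{0,1,2,3\}$. The overarching strategy is to pass to the quotient $G/Z$ for a suitable central or normal nilpotent subgroup $Z$, apply Theorem~\ref{thm.A} and its rank constraint (a simple rank-$4$ group has no proper definable subgroup of rank $>1$) to the simple factors that appear, and lift the resulting structure back to $G$. The key preliminary observations are that $F(G)$ is definable, connected, nilpotent, and normal; that solvability is detected by the solvability of $G/F(G)$; and that a connected group with no proper nontrivial definable normal subgroups is either abelian or a quasisimple/simple configuration to which Theorem~\ref{thm.A} applies once the rank is pinned down.

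\begin{proof}[Proof sketch]
First I would handle \textbf{(1)}. If $\rk F(G) \ge 2$, then $G/F(G)$ has rank at most $2$. A connected group of rank at most $2$ is solvable (a rank-$1$ connected group is abelian, and rank-$2$ connected groups are solvable by the standard structure theory), and since $F(G)$ is nilpotent, $G$ is solvable. For \textbf{(2)}, suppose $\rk F(G) = 1$, so $F(G)$ is a connected rank-$1$ nilpotent, hence abelian, normal subgroup. I would examine $\modu{G} := G/F(G)$, which has rank $3$. If $\modu{G}$ is solvable then so is $G$, forcing $G = F(G)$ by pushing nilpotency upward, contradicting $\rk G = 4 > \rk F(G)$; hence $\modu{G}$ is nonsolvable. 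The nonsolvable connected groups of rank $3$ are, up to the center, simple, and Theorem~\ref{thm.A} rules out rank-$4$ simple groups but here the relevant simple section has rank $3$; I would argue that the perfect core $Q = \modu{G}^{(\infty)}$ (the last term of the derived series) is quasisimple of rank $3$, and that either $F(G)$ is central in $G$ giving the central product $G = F(G) * Q$ of case~(b), or the action of $Q$ on $F(G)$ produces a proper definable subgroup structure that, combined with the rank count, forces case~(a) where $G$ itself is quasisimple bad.

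For \textbf{(3)}, with $\rk F(G) = 0$ so $F(G)$ finite and central, $G$ is perfect (else $G/G'$ would contribute to a nontrivial connected abelian normal, hence to $F(G)$), and $\modu{G} = G/F(G)$ has rank $4$ with trivial Fitting subgroup. If $\modu{G}$ is simple, Theorem~\ref{thm.A} applies directly, giving that $\modu{G}$, and then $G$, is a quasisimple bad group with no proper definable subgroup of rank $>1$: this is case~(a). Otherwise $\modu{G}$ has a proper nontrivial definable connected normal subgroup $N$; by the rank constraints and triviality of $F(\modu{G})$, $N$ must be nonsolvable of rank $3$, and minimality of such $N$ together with Theorem~\ref{thm.A} (applied after noting $N$ has no room for large proper subgroups) yields that $N$ is a normal quasisimple bad subgroup of rank $3$, which is case~(b). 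The final clause, $\rk F(G) \ge 1$ whenever $G$ has an involution, then follows: in the $\rk F(G) = 0$ cases the group $G$ has a quasisimple bad section (of rank $3$ or $4$), and bad groups have no involutions, so an involution in $G$ would descend to an involution in a bad section, a contradiction.
\end{proof}

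The step I expect to be the main obstacle is the rank-$3$ analysis inside cases~(2) and~(3): precisely identifying the perfect core, showing it is quasisimple and \emph{bad}, and controlling the interaction between $F(G)$ and this core (central product versus a genuinely noncentral extension). This requires knowing that a nonsolvable connected group of rank $3$ is, modulo its center, a simple rank-$3$ group with no proper definable subgroup of rank $>1$ — essentially the rank-$3$ analogue of Theorem~\ref{thm.A}, which must either be quoted from the literature or extracted from Fact~\ref{fact.Hru} and Hrushovski's theorem. The bookkeeping to ensure every claimed subgroup is definable and connected, and that the rank counts leave no unaccounted-for configurations, is where the real care lies.
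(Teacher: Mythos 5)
Your overall skeleton---splitting on $\rk F(G)$, handling (1) by rank-$2$ solvability, and reducing (2) and (3) to a quasisimple/component analysis plus the rank-$3$ classification---matches the paper, but several of your key deductions are wrong or missing. First, in case (2) your nonsolvability argument fails: a connected solvable group need not equal its Fitting subgroup (e.g.\ $K^+\rtimes K^\times$ has Fitting subgroup $K^+$), so ``pushing nilpotency upward'' to force $G=F(G)$ is not valid. The paper instead linearizes the action of $G$ on $F:=F^\circ(G)$ via \cite[Proposition~4.11]{ABC08}, notes that the image of $G$ in $\emorph(F)$ has rank at most $1$, and uses $C^\circ(F)\le F$ (valid for solvable groups) to get $\rk G\le 2$, a contradiction. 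Second, your dichotomy ``$F(G)$ central $\Rightarrow$ case (b), noncentral $\Rightarrow$ case (a)'' is miswired: if $G$ is quasisimple, then $F(G)$, being a proper normal subgroup, is automatically central, so centrality cannot separate (a) from (b). In fact $F$ is central in both cases; the paper proves centrality unconditionally (via the kernel of the action on an orbit $x^G$, Fact~\ref{fact.Hru}, and nilpotence of the preimages of two unipotent subgroups of $G/N$), and the true dichotomy is whether the component $Q\le G$ furnished by \cite[Proposition~7.3]{ABC08} is proper (case (b)) or equals $G$ (case (a)). Note also that your $Q$ is a subgroup of $G/F(G)$, whereas the central product $G=F(G)*Q$ requires a subgroup of $G$; and in case (a) you still owe a proof that $G$ is \emph{bad}---the paper rules out $G/Z(G)$ of the form $\psl_2$ because quasisimple groups of $\pssl_2$ type have finite center, and then shows every proper connected $A<G$ is nilpotent by observing that $AF/F$ has rank at most $1$ in the rank-$3$ bad group $G/F$.

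Two further gaps. In case (3), your claim that $G$ is perfect is unjustified---an abelian quotient $G/G'$ does not produce a normal abelian subgroup---and the paper never establishes it; this is precisely why 3(b) reads ``$G$ \emph{has} a normal quasisimple bad subgroup of rank $3$'' rather than ``$G$ is one.'' Perfectness is also unnecessary if, like the paper, you extract a component $Q$ of $G$ itself (again \cite[Proposition~7.3]{ABC08}) instead of working inside $G/F(G)$ and then facing a lifting problem for $N$. Finally, the last clause needs more than ``bad groups have no involutions'': an involution of $G$ need not lie in the normal bad subgroup $Q$, and an involution lying in a finite center maps to the \emph{identity}, not to an involution, of the bad section. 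The paper handles the first issue with the fact that simple bad groups admit no involutory \emph{automorphisms} (\cite[Theorem~13.3]{BoNe94}) and the second with the main result of \cite{BBC07} (connected degenerate-type groups contain no involutions); your ``descend to an involution in a bad section'' covers neither.
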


Finally, we give an application to groups of rank $4$ with a generically $2$-transitive action on a set of rank $2$; we show that such groups are either solvable or ``approximately'' $\gl_2$. A definable action of a group of finite Morley rank $G$ on a definable set $X$ is said to be \textdef{generically $n$-transitive} if $G$ has an orbit $\mathcal{O}$ on $X^n$ such that the rank of $X^n - \mathcal{O}$ is strictly less than the rank of $X^n$. 

\begin{MainCorollary}\label{cor.B}
If $G$ is a connected nonsolvable group of Morley rank $4$ acting faithfully, definably, transitively, and generically $2$-transitively on a definable set of rank $2$, then there is an algebraically closed field $K$ for which $G =Z(G) \cdot Q$ with  $Z(G)\cong K^\times$ and $Q\cong \pssl_2(K)$. Additionally, the action of $Q/Z(Q)$ on the $Z(G)$-orbits is equivalent to $(\proj^1(K),\psl_2(K))$.
\end{MainCorollary}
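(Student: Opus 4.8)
The plan is to manufacture an involution in $G$ directly from the generic $2$-transitivity, feed this into Corollary~\ref{cor.A} to pin down $\rk F(G)$, and then read off the fine structure with Hrushovski's characterization (Fact~\ref{fact.Hru}). Write $X$ for the set of rank $2$. By transitivity a point stabilizer has rank $2$, and generic $2$-transitivity furnishes a $G$-orbit $\orbit \subseteq X^2$ with $\rk(X^2 \setminus \orbit) < \rk X^2 = 4$; hence $\rk \orbit = 4 = \rk G$ and the stabilizer of a generic pair is finite. (Note $\rk X^3 = 6 > 4$, so the action is at most generically $2$-transitive.) The first thing I would record is that $\orbit$ is invariant under the coordinate swap $\sigma\colon(x,y)\mapsto(y,x)$: since $\sigma$ is $G$-equivariant for the diagonal action, it carries orbits to orbits of equal rank, and as $\orbit$ is the unique orbit of full rank $4$, we get $\sigma(\orbit)=\orbit$.

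Consequently, for a generic pair $(x,y)\in\orbit$ there is $g\in G$ with $g\cdot x=y$ and $g\cdot y=x$. Then $g^2$ fixes both points, so it lies in the finite generic two-point stabilizer; thus $g$ is torsion, and since $g$ interchanges the two distinct points $x,y$ its order is even. A suitable power of $g$ is therefore an involution, so $G$ contains an involution. With this in hand, Corollary~\ref{cor.A} does the structural work: as $G$ is nonsolvable, part (1) forces $\rk F(G)\le 1$, while the final clause forces $\rk F(G)\ge 1$; hence $\rk F(G)=1$ and we are in case (2). Set $F:=F(G)^\circ$, which is connected nilpotent of rank $1$; in both alternatives of case (2) it is central (in (2a) it equals $Z(G)$, in (2b) it commutes with $Q$ and is abelian), so $F\le Z(G)$ with $\rk Z(G)=1$.

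Next I would analyze the induced action on $Y:=X/F$. Because $F$ is normal and $G$ is transitive, all $F$-orbits on $X$ share one rank, which must be $1$: it is not $2$ (else $F$ would be transitive on a rank-$2$ set, impossible for $\rk F=1$) and not $0$ (else the connected normal group $F$ would fix a generic point and, by normality, act trivially, violating faithfulness). Thus $Y$ is transitive of rank $1$, hence strongly minimal, and $G/N$ acts on it faithfully and transitively, where $N\trianglelefteq G$ is the kernel. Since $G/F$ is quasisimple of rank $3$, any normal subgroup strictly between $F$ and $G$ is central and of finite index over $F$, so $\rk(G/N)=3$, and Fact~\ref{fact.Hru} forces $G/N\cong\psl_2(K)$ for an algebraically closed field $K$, with the action equivalent to $(\proj^1(K),\psl_2(K))$. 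A bad group is not $\psl_2(K)$, so this excludes the quasisimple-bad alternative (2a) and places us in (2b): $G=F(G)*Q$ with $Q$ quasisimple of rank $3$. As $G/F\cong Q/(Q\cap F)$ is then a quasisimple cover of $\psl_2(K)$, we get $Q\cong\pssl_2(K)$, and $Y\cong\proj^1(K)$ is exactly the set of $Z(G)$-orbits with $Q/Z(Q)\cong\psl_2(K)$ acting as claimed.

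It remains to identify the central rank-$1$ group, and this is the step I expect to be the main obstacle, since it is where the genuine ``approximately $\gl_2$'' content sits beyond the formal quotient argument. Everything above produces a field $K$ \emph{through the action on $Y$}, but the conclusion demands that $Z(G)$ be the \emph{multiplicative} group of that \emph{same} $K$, so one must rule out a unipotent ($K^+$) central subgroup and match the two a priori unrelated fields. The plan is to examine the faithful action of $Z(G)$ on the rank-$1$ fibers of the projection $X\to\proj^1(K)$: the stabilizer in $G$ of a point of $\proj^1(K)$ is the preimage of a Borel of $\psl_2(K)$, and tracing how this Borel acts on the corresponding punctured line should realize $Z(G)$ as scalar multiplication, forcing $Z(G)\cong K^\times$ with the field identified with the one already obtained. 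Carrying out this fiber analysis carefully — including the verification that the center acts torus-like rather than unipotently and that no second, incompatible field is interpreted — is the delicate part, whereas the production of involutions and the global decomposition $G=Z(G)\cdot Q$ follow cleanly from the swap argument and Corollaries~\ref{cor.A}.
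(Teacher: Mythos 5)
Your first three paragraphs are essentially sound and track the paper's own route: the swap argument manufacturing an involution (the paper merely cites this consequence of generic $2$-transitivity, so your self-contained proof is a nice touch), Corollary~\ref{cor.A} pinning $\rk F(G)=1$ with $Z:=F^\circ(G)$ central, and the quotient by the $Z$-orbits yielding, via Fact~\ref{fact.Hru}, $G/N\cong\psl_2(K)$ acting on $\proj^1(K)$, which also excludes the bad alternatives. The genuine gap is exactly where you flagged it, and your proposed repair cannot work as stated. You hope to identify $Z(G)$ with $K^\times$ by ``tracing how the Borel acts on the corresponding punctured line,'' i.e.\ by analyzing the action on a fiber of $X\to\proj^1(K)$. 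But that action carries no information at all: since $Z$ is central and $Z\cap G_x=1$, the fiber through $x$ is the $Z$-torsor $xZ$, and for $h\in G_x$ and $z\in Z$ one has $(x\cdot z)\cdot h=(x\cdot h)\cdot z=x\cdot z$, so $G_x$ fixes that fiber \emph{pointwise} and its setwise stabilizer $ZG_x$ acts on it through $Z$ alone, regularly. A regular action of a connected abelian rank-$1$ group on a rank-$1$ set looks the same whether the group is $K^+$, $K^\times$, or neither; no scalar-multiplication structure is visible on a single fiber, so this analysis cannot rule out a unipotent center or tie $Z$ to the field $K$.

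The paper closes the step with group-theoretic input, using generic $2$-transitivity a \emph{second} time: $G=\langle H,H^g\rangle$ for $H:=G_x^\circ$ and a generic conjugate, so $H\not\le Q$; hence $HN/N$ is a rank-$2$ subgroup of $G/N\cong\psl_2(K)$, i.e.\ a Borel $\cong K^+\rtimes K^\times$; the rank-$1$ normal subgroup $H\cap Q$ of $H$ must then contain the unipotent radical of $H$, so $H/(H\cap Q)$ is a torus modulo a finite subgroup, $\cong K^\times$ by Lemma~\ref{lem.DivisibleFiniteQuotient}; finally $H/(H\cap Q)\cong HQ/Q=G/Q\cong Z/(Z\cap Q)$, whence $Z\cong K^\times$ by Corollary~\ref{cor.DivisibleFiniteQuotient}. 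Note also that the statement asserts $Z(G)\cong K^\times$, not merely $F^\circ(G)\cong K^\times$: one still must show $Z(G)=Z$ (in particular that $Z(G)$ is connected), which the paper gets from $C(T)=T\times Z$ for a rank-$1$ torus $T\le G_x$ (Fact~\ref{fact.centTori}) together with $Z(G)\cap G_x=1$. Your proposal never addresses this point, and it is also what justifies your implicit identification of the $Z(G)$-orbits with the $F^\circ(G)$-orbits you actually quotiented by.
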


\section{Preliminaries}\label{sec.prelim}
We collect some background results for our analysis; the general theory of groups of finite Morley rank can be found in  \cite{PoB87}, \cite{BoNe94}, and \cite{ABC08}. 

\subsection{Groups of small rank}

\begin{definition}\label{def.definableGset}
We call $(X,G)$ a \textdef{permutation group} if $G$ is a group acting faithfully on the set $X$, and we say that $(X,G)$ has finite Morley rank if $G$, $X$, and the action of $G$ on $X$ are all definable in some ambient structure of finite Morley rank.
\end{definition}

\begin{fact}[Hrushovski, see \protect{\cite[Theorem~11.98]{BoNe94}}]\label{fact.Hru}
Let $(X,G)$ be a transitive permutation group of finite Morley rank with $X$ of rank $1$ and (Morley) degree $1$. Then $\rk G \le 3$, and if $\rk G >1$, there is an interpretable algebraically closed field $K$ such that either
\begin{enumerate}
\item $(X,G)$ is equivalent to $(K,\aff_1(K))$, or
\item $(X,G)$ is equivalent to $(\proj^1(K),\psl_2(K))$.
\end{enumerate}
\end{fact}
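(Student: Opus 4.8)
The plan is to reduce everything to the structure of point stabilizers and then to interpret a field. Since the action is transitive, $\rk G = \rk X + \rk G_x = 1 + \rk G_x$ for a point stabilizer $G_x$, so it suffices to analyze $G_x$ and to bound its rank by $2$. Assume $\rk G > 1$, so that $G_x$ is infinite; I may freely pass to connected components where convenient. The first goal is to promote transitivity to multiple transitivity. The stabilizer $G_x$ acts faithfully on $X \setminus \{x\}$ (the pointwise kernel would fix all of $X$), and this set is again strongly minimal. Hence $G_x^\circ$ acts nontrivially on a set of rank $1$ and degree $1$, so it has a generic orbit of rank $1$ whose complement is finite. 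Using the transitivity of $G$ on $X$ together with connectedness, one upgrades this cofinite orbit to a single orbit, so that $G$ is $2$-transitive on $X$.

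First I would then iterate on the stabilizer chain. Passing to the two-point stabilizer $G_{x,y}$, which has rank $\rk G - 2$, I distinguish two cases. If $G_{x,y}$ is finite, the action is sharply $2$-transitive and $\rk G = 2$. If $G_{x,y}$ is infinite, the same cofinite-orbit argument applied to its action on the strongly minimal set $X \setminus \{x,y\}$ yields $3$-transitivity; repeating once more shows that the three-point stabilizer $G_{x,y,z}$ must be finite, since otherwise one would obtain a faithful, sharply transitive action of a positive-rank connected group on the remaining rank-$1$ set that forces a further, impossible layer of transitivity. This yields the bound $\rk G \le 3$ and shows the action is either sharply $2$-transitive (rank $2$) or sharply $3$-transitive (rank $3$).

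The heart of the argument, and the step I expect to be the main obstacle, is the interpretation of the algebraically closed field $K$ and the identification of the action. In the sharply $2$-transitive case, the point stabilizer $G_x$ is connected of rank $1$, hence abelian by Reineke's theorem, and acts regularly on $X \setminus \{x\}$; the translation structure of a sharply $2$-transitive group produces a definable near-field, and Zilber's field theorem (applied to the action of $G_x$ on the abelian ``translation'' subgroup) yields a field $K$ with $(X,G)$ equivalent to $(K, \aff_1(K))$. In the sharply $3$-transitive case one similarly extracts a near-field and identifies $(X,G)$ with $(\proj^1(K),\pgl_2(K))$. Two points require care. First, by Macintyre's theorem an infinite field interpretable in a structure of finite Morley rank is algebraically closed, which supplies the algebraic closure and in particular forces $\pgl_2(K) = \psl_2(K)$, matching the stated conclusion. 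Second, the finite-rank and connectedness hypotheses must be used to collapse the a priori near-field structure to an honest commutative field, ruling out proper infinite near-fields. Assembling these identifications gives exactly the two listed equivalences.
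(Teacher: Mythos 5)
First, a point of order: the paper does not prove Fact~\ref{fact.Hru} at all --- it is imported verbatim from \cite[Theorem~11.98]{BoNe94} --- so there is no in-paper argument to compare yours against, and your sketch must stand on its own as a proof of Hrushovski's theorem. Its skeleton (orbit--stabilizer rank count, cofinite orbits of connected point stabilizers on a strongly minimal set, a stabilizer chain, then identification via the theory of sharply $2$-transitive groups and Zassenhaus groups together with Zilber's and Macintyre's theorems) is indeed the classical route. But the sketch has genuine gaps, one of them serious.

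The serious gap: you assert that ``if $G_{x,y}$ is finite, the action is sharply $2$-transitive,'' and likewise that finiteness of the three-point stabilizer gives sharp $3$-transitivity. Sharpness means the stabilizer is \emph{trivial}, and finite does not imply trivial. This is not a pedantic distinction; it is exactly the difficulty that the present paper's main theorem has to overcome. In Lemma~\ref{lem.TwoTransTwoPointStab}, the two-point stabilizers $B\cap B^g$ of the relevant $2$-transitive action are finite but provably \emph{nontrivial} --- indeed, if they were trivial, \cite[Proposition~11.71]{BoNe94} would split the group and Theorem~\ref{thm.A} would follow immediately, and the whole point--line geometry of Section~\ref{sec.simpleRankFour} exists precisely because the finite-but-nontrivial case cannot be excluded cheaply. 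So your reduction to the sharply $2$- and $3$-transitive classifications is unjustified as written: you need a separate argument that these finite stabilizers are trivial (or you must route the identification through structural results on connected groups of rank $\le 2$ instead), and no such argument is given.

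Two further steps are glossed over. (i) The upgrade from ``$G_x^\circ$ has a cofinite orbit on $X-\{x\}$'' to honest $2$-transitivity is asserted via ``transitivity plus connectedness,'' but a priori $G_x^\circ$ can fix points other than $x$; the standard remedy is to pass to the quotient by the definable equivalence relation $G_x^\circ = G_y^\circ$, control the resulting finite central kernel, classify the quotient action, and only then lift back to $(X,G)$ --- none of which is free. (ii) Your bound $\rk G\le 3$ rests on the phrase ``forces a further, impossible layer of transitivity,'' which is circular: iterating the cofinite-orbit argument only shows the rank drops by one at each step until a stabilizer is finite; the fact that the chain cannot reach a fourth layer is itself a theorem (in the sharp setting it is Tits' theorem on the nonexistence of infinite sharply $4$-transitive groups) and requires its own proof.
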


Next, we gather some information about groups of rank $2$. It should be noted that our definition of a unipotent group is rather coarse and not standard, but in the case of low Morley rank, it will suffice. 

\begin{definition}
Let $G$ be a group of finite Morley rank. Then
\begin{enumerate}
\item $G$ is called a \textdef{decent torus} if $G$ is divisible, abelian, and equal to the definable hull of its torsion subgroup, and
\item $G$ is said to be \textdef{unipotent} if $G$ is connected, nilpotent, and does not contain a nontrivial decent torus. 
\end{enumerate}
\end{definition}

\begin{fact}[\cite{ChG79}]\label{fact.rankTwoGroups}
Let $B$ be a connected group of rank $2$. Then $B$ is solvable. If $B$ is nilpotent and nonabelian, then $B$ has exponent $p$ or $p^2$ for some prime $p$. If $B$ is nonnilpotent, then
\begin{enumerate}
\item $B = B' \rtimes T$ with $T$ a decent torus containing $Z(B)$, 
\item $B/Z(B) \cong K^+ \rtimes K^\times$ for some algebraically closed field $K$, and
\item every automorphism of $B$ of finite order is inner.
\end{enumerate}
\end{fact}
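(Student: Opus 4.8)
The plan is to reduce everything to the structure theory of solvable and nilpotent groups of finite Morley rank, supplemented by two external inputs: Hrushovski's theorem (Fact~\ref{fact.Hru}) and Zilber's field interpretation. For the \emph{solvability}, I would first record that every connected group of rank $1$ is abelian, since a proper definable connected subgroup would have rank $0$ and so be finite, leaving the group with no infinite proper definable subgroup, whence a standard genericity argument forces commutativity. The derived subgroups $B \ge B' \ge B'' \ge \cdots$ are definable, connected, and of nonincreasing rank, so by the descending chain condition the series terminates at a perfect connected group $H$ of rank $\le 2$, and it suffices to show $H$ is trivial. Rank $1$ is covered by the previous remark. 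If $\rk H = 2$ and $Z(H)$ is infinite, then $H/Z(H)^\circ$ has rank $1$, hence is abelian, so $H' \le Z(H)^\circ$ has rank $\le 1$, contradicting $H = H'$. If $Z(H)$ is finite, I would pass to the centerless group $\bar H = H/Z(H)$, take a nontrivial connected definable abelian subgroup $M$ of rank $1$ (the connected component of the definable hull of a nontrivial element), and let $\bar H$ act on the rank-$1$ coset space $\bar H/M$; dividing by the kernel and passing to a degree-$1$ component, Fact~\ref{fact.Hru} leaves only $\aff_1(K)$ (solvable, not perfect) or $\psl_2(K)$ (rank $3 > 2$), while the residual possibility that the faithful quotient has rank $\le 1$ again forces $\bar H$ abelian. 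Each case contradicts that $\bar H$ is perfect of rank $2$.

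For the \emph{nilpotent nonabelian case}, the group has class $2$: since $B$ is nonabelian and nilpotent, $Z(B)$ is infinite of rank $1$, $B/Z(B)^\circ$ has rank $1$, and $B' \le Z(B)$. Using the decomposition of a connected nilpotent group as a central product of a divisible part and bounded-exponent $p$-parts, I would argue that the divisible (torus) part is central, so that all noncommutativity is concentrated in a connected $p$-group of bounded exponent. The exponent bound $p$ or $p^2$ then follows from a commutator computation: in a class-$2$ group $[x^p,y] = [x,y]^p$, and the rank-$1$ constraints on $Z(B)$ and on $B/Z(B)$ force $[x,y]^p = 1$, so $x^p$ is central and the exponent is at most $p^2$.

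For the \emph{nonnilpotent case}, $B$ is connected solvable but nonnilpotent, so $B'$ is a proper, definable, connected, nilpotent subgroup with $B/B'$ abelian; by rank both have rank $1$. If $B'$ were a decent torus it would be central, making $B$ nilpotent, so $B' = U$ is unipotent of rank $1$. I would then invoke the structure theory of connected solvable groups to produce a complement $T$ (a Carter subgroup / maximal torus) with $B = U \rtimes T$, giving item~(1) once one checks that $T$ is a decent torus containing $Z(B) = C_T(U)$. The faithful action of $T/Z(B)$ on $U$ is a connected rank-$1$ group acting on a connected rank-$1$ abelian group, so Zilber's field interpretation theorem produces an algebraically closed field $K$ with $U \cong K^+$ and $T/Z(B) \cong K^\times$ acting by multiplication, yielding item~(2): $B/Z(B) \cong K^+ \rtimes K^\times$.

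For item~(3), a finite-order $\sigma \in \aut(B)$ preserves $U = B'$ and induces compatible automorphisms of the interpretable field $K$; its field-automorphism part is a finite-order automorphism whose fixed field is definable, and since an algebraically closed field of finite Morley rank has no proper infinite definable subfield, this part is trivial. Hence $\sigma$ acts $K$-linearly on $U$, is realized by conjugation modulo $Z(B)$, and a short lifting argument upgrades this to an inner automorphism of $B$. The deep inputs are the field interpretation in the nonnilpotent case and Fact~\ref{fact.Hru} in the solvability step; I expect the main obstacle to be the centerless subcase of solvability—ruling out a perfect connected rank-$2$ group without recourse to the classification—together with the bookkeeping needed to guarantee that the complement $T$ exists and is genuinely a \emph{decent} torus containing the center.
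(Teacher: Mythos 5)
The paper offers no proof of this statement: it is quoted verbatim as a Fact from Cherlin's paper \cite{ChG79}, so there is no internal argument to compare yours against, and I can only assess your reconstruction on its merits. Your solvability reduction (derived series terminating in a perfect connected $H$ of rank at most $2$; rank $1$ abelian by Reineke; infinite center contradicts perfectness; the centerless case killed by letting $\bar H$ act on a rank-$1$ coset space and invoking Fact~\ref{fact.Hru}) is sound in outline, and your nonnilpotent analysis (rigidity forces $B'$ unipotent, Zilber's field theorem produces $K$, a Carter subgroup provides the complement) is the standard route, with the deferred checks ($T \cap B' = 1$, decency of $T$, $Z(B) = C_T(B')$) real but fillable. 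Two caveats there: the connected component of the definable hull of a torsion element can be trivial, so to produce the rank-$1$ abelian subgroup $M$ you should instead cite the standard fact that every infinite group of finite Morley rank contains an infinite definable abelian subgroup; and since you route solvability through Fact~\ref{fact.Hru}, you must confirm that the proof of that theorem does not itself rely on rank-$2$ solvability (historically Cherlin proved solvability directly, before Hrushovski's theorem was available).

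The first genuine gap is in the nilpotent nonabelian case. You assert that in the decomposition of $B$ into a divisible part and bounded-exponent parts, ``the divisible (torus) part is central, so that all noncommutativity is concentrated in a connected $p$-group of bounded exponent.'' But the divisible part of a nilpotent group of finite Morley rank is not a torus and need not be abelian: in characteristic $0$ there exist divisible torsion-free nilpotent nonabelian groups of finite Morley rank (Heisenberg-type groups). Ruling out a nonabelian divisible $B$ of rank $2$ is precisely the content of the exponent claim, and your sketch assumes it away. The repair is an argument such as: if $B$ is divisible nonabelian, then $B$ has class $2$, its torsion is central, and for $a \in A := B/Z(B)$ the map $[a,-]\colon A \to B'$ is a definable homomorphism whose kernel contains $a$ itself (alternation); since $A$ is rank $1$, connected, and torsion-free, this kernel is either finite (impossible, as $a$ has infinite order) or all of $A$, so every $[a,-]$ vanishes and $B$ is abelian. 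Note also that even granting centrality of the divisible part $D$, centrality is not enough: you need $D = 1$ (which follows by rank counting, since a nonabelian bounded-exponent factor must already have rank $2$), as otherwise $B$ would contain elements of unbounded order and the conclusion ``exponent $p$ or $p^2$'' would fail. Once $B$ is known to be a connected bounded-exponent $p$-group of class $2$, your final computation does work, provided you observe that $B'$ is elementary abelian (a rank-$1$ connected abelian $p$-group of bounded exponent has trivial $p$-th power subgroup), so that $[x,y]^p = 1$ and $x \mapsto x^p \bmod Z(B)^\circ$ is a homomorphism from the connected group $B$ into a finite group.

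The second gap is in item (3), and it is not merely technical: for \emph{abstract} automorphisms the statement you are proving is false in characteristic $0$. Coordinatewise complex conjugation on $\mathbb{C}^+ \rtimes \mathbb{C}^\times$ is a group automorphism of order $2$ which is not inner, since inner automorphisms act trivially on the quotient modulo the derived subgroup, i.e.\ on $\mathbb{C}^\times$, while conjugation does not. Your argument slides over this exactly where you say the induced field automorphism has ``definable'' fixed field: that holds only if the automorphism $\sigma$ is definable in an ambient structure of finite Morley rank (then the fixed field is an infinite---real closed, by Artin--Schreier---definable proper subfield of $K$, a contradiction), but an arbitrary finite-order automorphism need not be definable, and complex conjugation is not. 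So item (3) must be read with that definability hypothesis, which is how the paper actually uses it: in Proposition~\ref{prop.SimpleRankTwoSubRankFour} the automorphism in question is conjugation by a finite-order element of the ambient rank-$4$ group, hence definable. You should state this hypothesis explicitly and justify the definability of the induced field automorphism; as written, your proof of (3) purports to establish a false statement and therefore cannot be correct.
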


We now collect some easy consequences of the previous fact. 

\begin{lemma}\label{lem.RankTwoAbelian}
If $B$ is a connected group of rank $2$, then any one of the following implies that $B$ is abelian:
\begin{enumerate}
\item $B$ normalizes a nontrivial decent torus,
\item $B$ contains two distinct unipotent subgroups of rank $1$, or
\item $B$ is nilpotent and contains two distinct definable connected subgroups of rank $1$.
\end{enumerate} 
\end{lemma}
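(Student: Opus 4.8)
The plan is to prove each of the three implications in Lemma~\ref{lem.RankTwoAbelian} by leveraging Fact~\ref{fact.rankTwoGroups}, which tells us that a connected rank-$2$ group $B$ is solvable and, if nonabelian, is either nilpotent of exponent $p$ or $p^2$, or else nonnilpotent with the explicit structure listed. The overall strategy in each case is to assume $B$ is nonabelian and derive a contradiction (or, in case (3), to work directly with the nilpotent structure). Let me sketch each part.

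For part (1), suppose $B$ normalizes a nontrivial decent torus $T$. First I would pass to a definable connected infinite subgroup, noting that a nontrivial decent torus is infinite and we may take its definable hull to be connected. If $B$ were nonabelian, then by Fact~\ref{fact.rankTwoGroups} it is either unipotent (nilpotent, containing no decent torus) or nonnilpotent of the form $B'\rtimes T_0$ with $T_0$ a decent torus. In the unipotent/exponent-$p$ case, the key point is that a connected nilpotent group of bounded exponent cannot normalize a nontrivial decent torus nontrivially, and one checks the torus would have to centralize or lie inside $B$, contradicting that $B$ has no decent torus; since $B$ is connected and of rank $2$, the normalized torus $T$ must be central, forcing enough commutativity. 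In the nonnilpotent case, I would use that $B/Z(B)\cong K^+\rtimes K^\times$ acts without a normal nontrivial torus of the wrong type, so again the normalized $T$ is pushed into the center. The cleanest route is probably to show that in all nonabelian cases $T$ must be contained in $Z(B)$ or act by inner automorphisms, and then a rank count (the normalizer of $T$ together with $C_B(T)$ filling up $B$) forces $B$ abelian.

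For part (2), suppose $B$ contains two distinct unipotent subgroups $U_1,U_2$ of rank $1$. If $B$ is nonabelian, Fact~\ref{fact.rankTwoGroups} gives two subcases. If $B$ is nonnilpotent, then $B=B'\rtimes T$ with $T$ a decent torus of rank $1$ and $B'$ of rank $1$; here any unipotent rank-$1$ subgroup must be $B'$ itself (a decent torus is not unipotent, and $B'$ is the unique unipotent piece), contradicting the existence of two distinct ones. If $B$ is nilpotent nonabelian of exponent $p$ or $p^2$, then I would use nilpotence to find a central rank-$1$ subgroup and argue that $U_1U_2$ has rank $2$, hence equals $B^\circ=B$, while the commutator structure of a nonabelian nilpotent rank-$2$ group forces $[U_1,U_2]$ to be trivial or to land in a proper subgroup, yielding abelianness. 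For part (3), the hypothesis is that $B$ is nilpotent and contains two distinct definable connected rank-$1$ subgroups $H_1,H_2$; again $H_1H_2$ has rank $2$ and so is all of $B$, and since $B$ is nilpotent each $H_i$ meets the center nontrivially, which combined with the commutator estimates forces $[B,B]$ to be trivial.

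The main obstacle I expect is the bookkeeping in part (1): pinning down precisely why a normalized nontrivial decent torus must be central in the various nonabelian cases, since the unipotent (exponent $p$ or $p^2$) case does not come with the explicit semidirect-product description that the nonnilpotent case enjoys. The cleanest unifying lemma would be that a connected solvable rank-$2$ group acting on a decent torus does so with a rank-$1$ (hence, after a rank count, central) kernel, but making this rigorous may require invoking rigidity of decent tori (their automorphism groups are small/the normalizer-centralizer connection) rather than a direct structural computation. Parts (2) and (3) are more routine once one observes that the product of two distinct rank-$1$ subgroups exhausts $B$ and then applies the exponent and nilpotence constraints from Fact~\ref{fact.rankTwoGroups} to rule out nontrivial commutators.
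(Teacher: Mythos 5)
There are genuine gaps in two of the three parts. Your reduction of (2) to the nilpotent case---arguing that a nonnilpotent connected rank-$2$ group has $B'$ as its only unipotent rank-$1$ subgroup---is essentially the paper's move and is fine. But in (3) (and hence in the nilpotent half of (2)) you assert that ``since $B$ is nilpotent each $H_i$ meets the center nontrivially.'' That is not a consequence of nilpotence for arbitrary subgroups: only \emph{normal} subgroups of a nilpotent group are guaranteed to meet the center (already the finite Heisenberg group has cyclic subgroups intersecting its center trivially). The paper supplies exactly the step you skip: by the Normalizer Condition for nilpotent groups of finite Morley rank \cite[I,~Proposition~5.3]{ABC08}, each definable connected rank-$1$ subgroup is normal in $B$, and then \cite[I,~Lemma~5.1]{ABC08} gives an \emph{infinite} intersection with $Z(B)$. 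The strengthening from ``nontrivial'' to ``infinite'' also matters: a finite nontrivial intersection plus your unspecified ``commutator estimates'' does not conclude, whereas an infinite intersection has finite index in the connected rank-$1$ group $H_i$, forcing $H_i \le Z(B)$ for both $i$, hence $\rk Z(B) = 2$ and $B = Z(B)$. (A workable substitute for normality in this rank-$2$ setting: $Z(B)$ is infinite because $B$ is an infinite nilpotent group of finite Morley rank, and if $H_i \cap Z^\circ(B)$ were finite then $B = H_i Z^\circ(B)$ would already be abelian; but some such argument must actually be made.)

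Your plan for (1) also does not close, as you partly concede. The ``unifying lemma'' you propose---that $B$ acts on the torus with a rank-$1$ kernel---is backwards: rigidity of decent tori makes $N(T)/C(T)$ finite, so the connected group $B$ lies entirely in $C(T)$; the kernel is all of $B$, not of rank $1$, and there is no ``rank count'' to run. More importantly, $[B,T]=1$ by itself cannot yield abelianness of $B$ (take $B$ nonabelian with $T$ an external commuting factor); one must use that the torus sits inside $B$, as it does in the paper's applications, so that centralizing it means $T \le Z(B)$. That is where Fact~\ref{fact.rankTwoGroups} bites: in the nonnilpotent case $Z(B)$ is finite (since $B/Z(B) \cong K^+ \rtimes K^\times$ has rank $2$), and in the nilpotent nonabelian case $B$ has exponent $p$ or $p^2$, hence contains no nontrivial divisible subgroup at all; either way a nontrivial normal decent torus is impossible unless $B$ is abelian. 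Your phrases ``forcing enough commutativity'' and ``the normalized $T$ is pushed into the center'' stand in for precisely this content, which is what the paper means when it says item (1) follows immediately from Fact~\ref{fact.rankTwoGroups}.
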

\begin{proof}
The first item follows immediately from the previous fact. For the third point, assume that $B$ is nilpotent and contains two distinct definable connected subgroups each of rank $1$. By the ``Normalizer Condition'' for nilpotent groups of finite Morley rank, see \cite[I,~Proposition~5.3]{ABC08}, both subgroups must be normal in $B$, and hence, both have an infinite intersection with $Z(B)$, see \cite[I,~Lemma~5.1]{ABC08}. Thus $Z(B) = B$. This establishes the third point, and the second now follows since the hypothesis implies, upon invoking Fact~\ref{fact.rankTwoGroups}, that $B$ is nilpotent. 
\end{proof}

We will say a little more about nonnilpotent groups of rank $2$ for which we need a lemma (and its corollary). This is certainly well-known.

\begin{lemma}\label{lem.DivisibleFiniteQuotient}
If $D$ is a divisible abelian group and $A$ is a finite subgroup, then $D\cong D/A$. 
\end{lemma}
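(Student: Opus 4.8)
The plan is to reduce to the case where $A$ has prime order and then exploit the fact that divisible subgroups split off as direct summands. Throughout I write $D$ additively.

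First I would record the key case: suppose $|A| = p$ for a prime $p$, and let $a$ generate $A$, so $a$ has order $p$. Using divisibility of $D$, I would choose elements $a_1, a_2, \ldots$ with $p a_1 = a$ and $p a_{n+1} = a_n$ for all $n$; the subgroup $P = \langle a, a_1, a_2, \ldots\rangle$ is then a copy of the Pr\"ufer group $\mathbb{Z}(p^{\infty})$ containing $A$ as its unique subgroup of order $p$. Since $P$ is divisible, hence an injective $\mathbb{Z}$-module, it is a direct summand of $D$, say $D = P \oplus C$. As $A \le P$, the correspondence of subgroups gives $D/A = (P/A) \oplus C$. Finally, multiplication by $p$ is a surjective endomorphism of $P \cong \mathbb{Z}(p^{\infty})$ whose kernel is exactly $A$, so $P/A \cong P$; combining, $D/A \cong P \oplus C = D$.

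Then I would handle the general case by induction on $|A|$, the case $|A| = 1$ being trivial. If $|A| > 1$, I would choose a subgroup $A_1 \le A$ of prime index $p$ (possible since $A$ is finite abelian). The inductive hypothesis applied to $A_1$ yields $D \cong D/A_1$. Now $D/A_1$ is again divisible, being a quotient of a divisible group, and $A/A_1$ is a subgroup of it of order $p$, so the key case gives $D/A_1 \cong (D/A_1)/(A/A_1) \cong D/A$, whence $D \cong D/A$.

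The argument is entirely routine --- this is the ``well-known'' remark in the statement --- and relies only on two standard facts about divisible abelian groups: that a divisible subgroup is a direct summand, and that $\mathbb{Z}(p^{\infty})/A \cong \mathbb{Z}(p^{\infty})$ when $A$ is its order-$p$ subgroup (both used above). The only mild point to watch is the reduction step: one must quotient in stages and re-invoke divisibility of the quotient at each stage, rather than try to build a single Pr\"ufer-type summand containing a non-cyclic $A$ all at once, which would become awkward when the $p$-primary part of $A$ has rank greater than one.
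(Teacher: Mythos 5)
Your proof is correct and follows essentially the same route as the paper: embed (a generator of) $A$ in a Pr\"ufer subgroup, split it off as a direct summand using injectivity of divisible groups, and use that a Pr\"ufer group is isomorphic to its quotient by a finite subgroup. The only difference is organizational --- you induct down to $|A|=p$ via staged quotients, while the paper reduces to $A$ cyclic of prime power order and handles any finite subgroup of $\mathbb{Z}_{p^\infty}$ at once via the $m$-th power map --- which changes nothing of substance.
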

\begin{proof}
The critical case is when $D \cong \mathbb{Z}_{p^\infty}$ for some prime $p$. In this case, if $A$ has order $m$, then $A$ is the unique subgroup of $D$ of order $m$, so $A$ is the kernel of the map $D\rightarrow D:x\mapsto x^m$. By divisibility, the map is surjective, so in this case, $D\cong D/A$.

Now, the general case easily reduces to the case of $A$ cyclic of prime power order, so assume that $A = \langle a \rangle$ with $a$ a $p$-element for some prime $p$. By the divisibility of $D$, $a$ is contained in a subgroup $T$ that is isomorphic to $\mathbb{Z}_{p^\infty}$. As $T$ is divisible and $D$ is abelian, it is well-known that $T$ has a complement $H$ in $D$. Now we have that $D/A \cong (T\times H)/ (A \times 1) \cong T/A \times H$, and as we have already observed that $T/A \cong T$, we are done. 
\end{proof}

\begin{corollary}\label{cor.DivisibleFiniteQuotient}
If $G$ is a connected group of finite Morley rank and $N$ is a finite normal subgroup for which $G/N \cong K^\times$ with $K$ a field, then $G\cong K^\times$.
\end{corollary}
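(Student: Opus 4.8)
The plan is to reduce everything to Lemma~\ref{lem.DivisibleFiniteQuotient} by first establishing that $G$ is a divisible abelian group; once that is in place, taking $D = G$ and $A = N$ in that lemma gives $G \cong G/N \cong K^\times$ immediately. It is worth noting that this route never requires knowing that $K^\times$ is itself divisible, since the divisibility we use is that of $G$, which comes for free from its being connected abelian.

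First I would show that $G$ is abelian. As $K^\times$ is abelian, the quotient $G/N$ is abelian, so the derived subgroup $G'$ is contained in the finite group $N$. Since $G$ is connected, $G'$ is connected (and definable), and a finite connected group is trivial; hence $G' = 1$ and $G$ is abelian. If one prefers to sidestep the connectedness of $G'$, the same conclusion follows from a direct argument: for fixed $g$, the definable map $x \mapsto [g,x]$ carries the connected group $G$ into the finite set $N$, so its image, being a connected definable subset, collapses to the single point $[g,1] = 1$, forcing $g$ to be central.

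Next I would invoke the well-known fact that a connected abelian group of finite Morley rank is divisible. This places $G$ squarely in the hypotheses of Lemma~\ref{lem.DivisibleFiniteQuotient}, with $N$ playing the role of the finite subgroup, and applying that lemma yields $G \cong G/N$. Combining this with the hypothesis $G/N \cong K^\times$ finishes the proof.

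The only content here is the pair of structural inputs, namely that $G$ is abelian and that connected abelian groups of finite Morley rank are divisible, and both are standard. I do not anticipate any genuine obstacle: the entire corollary amounts to assembling these two facts and feeding the outcome into the preceding lemma, so the main thing to be careful about is simply that the divisibility used is that of $G$ rather than of $K^\times$.
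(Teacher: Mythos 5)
Your overall plan coincides with the paper's: establish that $G$ is divisible abelian and then apply Lemma~\ref{lem.DivisibleFiniteQuotient}. The abelian step is fine in its first form ($G'$ is definable and connected by Zilber's theorem on commutator subgroups of connected groups, and a finite connected group is trivial). But the divisibility step contains a genuine error: it is \emph{not} true that a connected abelian group of finite Morley rank is divisible. The additive group $K^+$ of an algebraically closed field of characteristic $p$, or any infinite definable elementary abelian $p$-group, is connected and abelian of finite Morley rank but has exponent $p$; Macintyre's theorem only decomposes an abelian group of finite Morley rank into a divisible part plus a part of bounded exponent, and connectedness does not eliminate the latter. Consequently, your opening remark that the argument ``never requires knowing that $K^\times$ is itself divisible'' is exactly backwards: the divisibility of $G$ must be pulled back from that of $G/N\cong K^\times$ (which holds here because $K$ is algebraically closed in all of the paper's applications). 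The repair is short: since $G$ is abelian, $G^n=\{g^n : g\in G\}$ is a definable subgroup for each $n\ge 1$; divisibility of $G/N$ gives $G=G^nN$, so $G^n$ has index at most $|N|$ in $G$, and connectedness forces $G^n=G$, i.e.\ $G$ is divisible. This is in effect what the paper draws from its citation of \cite[I,~Lemma~3.8]{ABC08} before invoking Lemma~\ref{lem.DivisibleFiniteQuotient}.

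A secondary problem: your ``sidestep'' proof of abelianness also fails as stated, because the image of a connected definable set under a definable map need not be connected --- the map $K\to\{0,1\}$ sending $0$ to $1$ and everything else to $0$ is definable --- so a finite image does not collapse to a point merely because the domain is connected. The elementary argument that does work runs through fibers: $[g,x]=[g,y]$ if and only if $xy^{-1}\in C(g)$, so the fibers of $x\mapsto[g,x]$ are cosets of $C(g)$; finiteness of the image then forces $C(g)$ to have finite index, whence $C(g)=G$ by connectedness and $g$ is central. So keep your first abelianness argument, discard the second, and replace the divisibility step as above.
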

\begin{proof}
This follows directly from the previous lemma since the hypotheses, together with \cite[I,~Lemma~3.8]{ABC08}, imply that  $G$ is divisible abelian.
\end{proof}

\begin{lemma}\label{lem.RankTwoNonnilCenter}
Let $B$ be a nonnilpotent connected group of rank $2$. Set $n:=|Z(B)|$. Then $Z(B)$ contains all elements of $B$ of order dividing $n$, so $Z(B)$ is the unique subgroup of $B$ of cardinality $n$.
\end{lemma}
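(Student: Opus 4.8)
The plan is to leverage the explicit description of nonnilpotent rank-$2$ groups furnished by Fact~\ref{fact.rankTwoGroups} and then track torsion through the decomposition $B = B' \rtimes T$. First I would record the structural consequences. Since $B/Z(B) \cong K^+ \rtimes K^\times$ has rank $2$, the center $Z(B)$ has rank $0$ and so is finite of order $n$. As $Z(B) \le T$ and the product $B = B' \rtimes T$ is internal, $B' \cap Z(B) \le B' \cap T = 1$; combined with the fact that $B'$ maps onto $(B/Z(B))' = K^+$, this yields $B' \cong K^+$. Likewise the image of $T$ in $B/Z(B)$ is a rank-$1$ decent torus complementing $K^+$, so $T/Z(B) \cong K^\times$, and then Corollary~\ref{cor.DivisibleFiniteQuotient}, applied to the connected group $T$ with finite central subgroup $Z(B)$, gives $T \cong K^\times$.

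The crucial intermediate step is to show that $\charac K \nmid n$ and that $T[n] := \{t \in T : t^n = 1\}$ equals $Z(B)$. Since $T \cong K^\times$, the group $T[n]$ is isomorphic to the group $\mu_n$ of $n$-th roots of unity in $K$, which has order $n$ if $\charac K \nmid n$ and order strictly less than $n$ otherwise. On the other hand, Lagrange's theorem gives $Z(B) \subseteq T[n]$ with $|Z(B)| = n$. Thus $n = |Z(B)| \le |T[n]| = |\mu_n| \le n$, forcing equality throughout: $\charac K \nmid n$, and $T[n] = Z(B)$, the latter because $Z(B)$ is a subgroup of $T[n]$ of the same finite cardinality.

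With these in hand the main argument is short. Given $x \in B$ with $x^n = 1$, I would project to $B/B' \cong T$ (the isomorphism induced by the complement $T$); the image is $n$-torsion, hence lies in $T[n] = Z(B)$, so $x \in B' Z(B)$. Writing $x = bz$ with $b \in B'$ and $z \in Z(B)$ central, one gets $1 = x^n = b^n z^n = b^n$, so $b$ is an element of $B' \cong K^+$ with $b^n = 1$. Because $\charac K \nmid n$, the group $K^+$ has no nontrivial element killed by $n$ (it is torsion-free in characteristic $0$ and of exponent $p \nmid n$ in characteristic $p$), whence $b = 1$ and $x = z \in Z(B)$. This proves the first assertion, and the final clause follows immediately: any subgroup $H \le B$ with $|H| = n$ consists, by Lagrange, of elements of order dividing $n$, so $H \subseteq Z(B)$, and equal cardinalities force $H = Z(B)$.

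The main obstacle is the torsion bookkeeping of the middle paragraph, namely pinning down that $\charac K \nmid n$ and $T[n] = Z(B)$, since this is exactly what rules out the a priori possibility of an element $t \in T$ of order dividing $n$ whose image in $K^\times$ is a nontrivial root of unity; such a $t$ would furnish $x = t \notin Z(B)$ with $x^n = 1$. Identifying $T$ with $K^\times$ via Corollary~\ref{cor.DivisibleFiniteQuotient} is what makes this transparent, and the remaining reduction to the derived subgroup is then routine.
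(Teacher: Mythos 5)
Your proof is correct and follows essentially the same route as the paper's: the decomposition $B = B' \rtimes T$ from Fact~\ref{fact.rankTwoGroups}, the identification $T \cong K^\times$ via Corollary~\ref{cor.DivisibleFiniteQuotient}, projection to $B/B' \cong T$, and the splitting $x = bz$ to kill the $B'$-component. The only difference is expository: you make explicit that $\charac K \nmid n$ (via the $\mu_n$ counting argument), a fact the paper uses implicitly when it asserts that $B' \cong K^+$ has no nontrivial elements of order dividing $n$.
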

\begin{proof}
Set $Z:=Z(B)$. By Fact~\ref{fact.rankTwoGroups}, $B$ is solvable, and $Z$ is finite. Further, $B=B'\rtimes T$ with $T$ a decent torus containing $Z$, and $B/Z \cong K^+\rtimes K^\times$ for some algebraically closed field $K$. By the previous corollary, $T\cong K^\times$, so $T$ contains a unique subgroup of order $m$ for every $m$ dividing $n$. Further, as $B' \cong K^+$, we see that $B'$ has no nontrivial elements of order dividing $n$.

Now, let $g\in B$ be of order $m$ with $m$ dividing $n$. As $T$ contains a unique subgroup of order $m$, we find that the image of $g$ in $B/B'$ lies in the image of $Z$. Thus, we may write $g=uz$ for some $u\in B'$ and some $z\in Z$. Now, $1 = g^m = u^mz^m$, so $u^m \in B' \cap Z$. Hence, $u^m = 1$, so our previous observation implies that $u=1$ and $g\in Z$.
\end{proof}

\subsection{Tori}
Here we simply quote a pair of general facts about tori.

\begin{fact}[\protect{\cite[Theorem~1]{AlBu08},\cite[Corollary~2.12]{FrO09}}]\label{fact.centTori}
If $T$ is a decent torus in a connected group of finite Morley rank, then $C(T)$ is connected. 
\end{fact}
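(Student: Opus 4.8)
The plan is to reduce the statement to a rigidity property of decent tori and then to an analysis of torsion elements. Write $C=C(T)$ and $N=N(T)$ for the centralizer and normalizer of $T$ in the ambient connected group $G$. First I would record the relevant structure of $T$: being divisible it is connected; for each $n$ the $n$-torsion $T[n]=\{t\in T: t^n=1\}$ is a finite characteristic (indeed definable) subgroup; the torsion subgroup $T_{\mathrm{tor}}=\bigcup_n T[n]$ satisfies $\overline{T_{\mathrm{tor}}}=T$ by the definition of a decent torus; and consequently $C=C(T_{\mathrm{tor}})$, since any element centralizing $T_{\mathrm{tor}}$ has a definable centralizer containing $\overline{T_{\mathrm{tor}}}=T$. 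Note also that $T\le Z(C)$ and $T\le C^\circ$.

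The engine of the proof is the following rigidity lemma: any connected definable group $H$ acting definably on $T$ by automorphisms centralizes $T$. I would prove this by observing that $H$ permutes the finite set $T[n]$, so the action yields a homomorphism from $H$ into the finite group $\mathrm{Sym}(T[n])$; since $H$ is connected it has no proper definable subgroup of finite index, hence no nontrivial homomorphism to a finite group, so $H$ fixes $T[n]$ pointwise. As this holds for all $n$, $H$ fixes $T_{\mathrm{tor}}$ pointwise, and then for each $h\in H$ the definable fixed-point subgroup $\{t\in T: t^h=t\}$ contains $T_{\mathrm{tor}}$, hence contains $\overline{T_{\mathrm{tor}}}=T$. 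Applying this with $H=N^\circ$ (acting by conjugation) gives $N^\circ\le C$, and since $C\le N$ always holds, we obtain $C^\circ=N^\circ\le C$. Thus the theorem reduces to showing $C\le C^\circ$, i.e. that every element centralizing $T$ already lies in $C^\circ$.

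Next I would reduce this to torsion elements. Given $g\in C$, the definable hull $\overline{\langle g\rangle}$ centralizes $T$, so $A:=\overline{\langle g\rangle}\cdot T$ is a definable abelian subgroup of $C$ containing $g$ and $T$. Its connected component $A^\circ$ is connected abelian, hence divisible, hence a direct summand: $A=A^\circ\oplus B$ with $B\cong A/A^\circ$ finite. Writing $g=a+b$ accordingly (additively), we have $a\in A^\circ\le C^\circ$ and $b=g-a\in A\le C$ a torsion element, so $g\in C^\circ$ if and only if $b\in C^\circ$; passing to primary components further reduces matters to showing that a $p$-element of $C$ lies in $C^\circ$.

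The hard part will be this last step, and it is where I expect the genuine difficulty to lie (and where the cited works of Altınel--Burdges and Frécon concentrate their effort). For a torsion $b\in C=C(T)$ we have $T\le C(b)$, so $T$ is a decent torus of the connected group $C(b)^\circ$; extending $T$ to a maximal decent torus $S$ of $C(b)^\circ$, any such $S$ is abelian and connected, whence $S\le C(T)^\circ=C^\circ$, so it would suffice to absorb $b$ into the connected structure attached to such an $S$. The obstacle is precisely that a torsion---possibly unipotent---element need not be toral, so one cannot simply place $b$ inside $S$; instead one must invoke the conjugacy of maximal decent tori together with a generosity/genericity count (controlling the conjugates of $C^\circ$, showing that any $g\in C$ normalizes $C^\circ$, and then catching it by the rigidity above) to force $b\in C^\circ$. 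Carrying out this generosity argument is the main obstacle, and it is the substantive content quoted from the two references.
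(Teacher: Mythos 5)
The first thing to note is that the paper itself contains no proof of this statement: it is imported as a Fact, with the content attributed to Alt{\i}nel--Burdges \cite{AlBu08} and Fr\'econ \cite{FrO09}. So the only meaningful question is whether your argument actually proves the theorem, and it does not. The preliminary reductions are fine and standard: the rigidity lemma (a connected definable group acting definably on $T$ stabilizes each finite layer $T[n]$, hence fixes the torsion pointwise, hence centralizes its definable hull $T$) is correct, and it does give $N^\circ(T)=C^\circ(T)$. The reduction to torsion elements is also salvageable, but as written it rests on a false claim: a connected abelian group of finite Morley rank need \emph{not} be divisible --- $K^+$ in characteristic $p$ is connected of exponent $p$ --- so you cannot split $A=A^\circ\oplus B$ by citing divisibility of $A^\circ$. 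The repair is Macintyre's theorem: $A=D\oplus B$ with $D$ divisible and $B$ of bounded exponent; since $D$ is divisible it has no proper subgroup of finite index, so $D=D\cap A^\circ\le A^\circ\le C^\circ(T)$, and writing $g=d+b$ with $d\in D$ and $b\in B$ torsion gives the same reduction to $p$-elements.

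The genuine gap is the final step, and you concede it yourself: you never prove that a $p$-element of $C(T)$ lies in $C^\circ(T)$; you only indicate that it ``should'' follow from conjugacy of maximal decent tori together with a generosity argument, i.e., from the content of the two cited papers. But that step \emph{is} the theorem. Everything preceding it --- rigidity, $N^\circ=C^\circ$, the splitting into a connected part and a torsion part --- is routine bookkeeping that any treatment would include, while the entire mathematical substance of the result of \cite{AlBu08} and \cite{FrO09} is exactly the genericity/generosity machinery (Cherlin-style conjugacy and generosity of maximal decent tori, applied with an induction on rank) needed to absorb such a $p$-element into $C^\circ(T)$. The obstruction you name is real: such an element may be unipotent-like, so it cannot be placed inside any torus, and no appeal to toral structure or to abelian structure theory will catch it; nothing in your sketch circumvents this. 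As it stands, your proposal is a correct reduction of the Fact to its hard core, plus a citation of that core --- not a proof of the Fact.
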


\begin{fact}[\protect{\cite[Theorem~3]{BuCh08}}]\label{fact.torality}
Let $p$ be a prime, and assume that $G$ is a group of finite Morley rank with no infinite elementary abelian $p$-group. Then every $p$-element of $G$ lies in a decent torus. 
\end{fact}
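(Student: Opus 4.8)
This is the Burdges--Cherlin torality principle, quoted here from \cite{BuCh08}; below I sketch the shape of a proof. Read literally the statement needs $G$ to be connected: a $p$-element outside $G^\circ$ may fail to be toral, as the $\mathbb{Z}/p$ summand of $\mathbb{Z}_{p^\infty}\oplus\mathbb{Z}/p$ shows, so I would first reduce to the case $G=G^\circ$. The plan is then to pin down the relevant Sylow structure. Under the standing hypothesis, the Borovik--Poizat description of Sylow $p$-subgroups forces the connected component $S^\circ$ of a Sylow $p$-subgroup to have trivial bounded-exponent part, since a nontrivial connected $p$-group of bounded exponent contains an infinite elementary abelian $p$-group; hence $S^\circ$ is a $p$-torus and so a decent torus. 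As the maximal $p$-tori are precisely the conjugates of $S^\circ$ and these are conjugate by Sylow theory, the theorem reduces to showing that every $p$-element is \emph{toral}, i.e. lies in some maximal $p$-torus.

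I would next reduce to $x$ of order exactly $p$ by torsion-lifting. If $x$ has order $p^k$ with $k\ge 2$, then $x^p$ is toral by induction, so it lies in a maximal $p$-torus $S$ of $H:=C^\circ(x^p)$ (note $x^p$ sits in a decent torus, which is connected and hence inside $H$); since $x$ commutes with $x^p$ it normalizes $H$, a Frattini argument over the conjugacy of maximal $p$-tori of $H$ lets me assume $x$ normalizes $S$, and analyzing the finite-order automorphism $x$ induces on $S$ (using the rigidity of tori, together with divisibility to extract $p$-th roots inside $S$) either places $x$ in a decent torus or produces an element of order $p$ whose torality suffices. The abelian case of the remaining order-$p$ problem is a clean warm-up and shows why connectedness is essential: connectedness forces $p$-divisibility (as $G[p]$ is finite, $pG$ has finite index, hence equals $G$), and a $p$-divisible abelian group with finite $p$-socle has $p$-torsion equal to a $p$-torus, so every $p$-element is toral. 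For a non-central $x$ the centralizer $C(x)$ is a proper definable subgroup, so $C^\circ(x)$ has strictly smaller rank and one can hope to induct, with care taken to phrase the inductive statement so as to avoid the false non-connected version.

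The hard part will be the residual case, where $x$ is central of order $p$ but lies outside the connected center; the involution $-I\in\ssl_2(K)$ is the guiding example, central and outside the trivial connected center yet toral in the diagonal torus. Here rank induction is unavailable, since $C^\circ(x)=G$, and the naive moves fail: adjoining a central element of order $p$ to a maximal $p$-torus only enlarges a finite elementary abelian section, so maximality alone does not force $x$ inside the torus. I expect to need the generosity of maximal decent tori (a maximal decent torus $Q$ has $\bigcup_g Q^g$ generic in $G$) together with the connectedness of $C(Q)$ from Fact~\ref{fact.centTori}, combined to show that the central $p$-torsion is absorbed into $Q$. The no-infinite-elementary-abelian hypothesis should enter decisively at exactly this point, preventing the accumulation of independent non-toral $p$-elements that the failure of torality would otherwise generate.
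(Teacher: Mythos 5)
This statement is quoted in the paper as a Fact from \cite[Theorem~3]{BuCh08}; the paper contains no proof of it, so there is no in-paper argument to compare yours against, and the only question is whether your sketch would stand as an independent proof. Your opening observation is correct and is a genuine catch: as printed, the Fact omits a connectedness hypothesis, and your example really is a counterexample. The group $G=\mathbb{Z}_{p^\infty}\oplus\mathbb{Z}/p$ is abelian and $\omega$-stable of Morley rank $1$ (with $G^\circ=pG\cong\mathbb{Z}_{p^\infty}$), its $p$-socle is finite so it has no infinite elementary abelian $p$-subgroup, yet no element outside the Pr\"ufer summand lies in a decent torus, since any divisible subgroup has divisible, hence trivial, image in $\mathbb{Z}/p$ and so is contained in $\mathbb{Z}_{p^\infty}\oplus 0$. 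Burdges and Cherlin state their theorem for connected $G$, and every application in this paper is to a connected (indeed simple) group, so the omission is harmless here, but it is real.

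As a proof, however, your plan gives out exactly where the content lies. First, ``conjugate by Sylow theory'' is not available: conjugacy of Sylow $p$-subgroups for odd $p$ is open in general, and under the present hypothesis the conjugacy of maximal $p$-tori (and of Sylow $p$-subgroups) is itself one of the main theorems of the very paper being quoted, proved alongside torality rather than before it; so this step presupposes the machinery it is supposed to deliver. Second, in the reduction from order $p^k$ to order $p$, the alternative ``either places $x$ in a decent torus or produces an element of order $p$ whose torality suffices'' is not an argument: even when $x$ centralizes a maximal $p$-torus $S$, the group $S\langle x\rangle$ is abelian with $S$ of index $p$ but need not be divisible --- this is exactly the obstruction your own counterexample exhibits, now sitting inside a connected ambient group --- and the same issue blocks the rank induction for noncentral $x$, since $x$ centralizes $C^\circ(x)$ but need not belong to it, so the inductive statement cannot be applied to a connected group containing $x$. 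Third, and most importantly, the ingredient your sketch never names, and which is what actually closes these cases in \cite{BuCh08}, is the degenerate-type theorem: a \emph{connected} group of finite Morley rank with no infinite elementary abelian $p$-subgroup and no nontrivial $p$-torus has no $p$-torsion at all (for $p=2$ this is the main result of \cite{BBC07}; the odd-prime analogue is due to Burdges and Cherlin). That theorem is what resolves your ``hard part'': if $x$ centralizes a maximal $p$-torus $T$, then $C(T)$ is connected by Fact~\ref{fact.centTori}, the quotient of $C(T)$ by the definable hull of $T$ has no nontrivial $p$-torus and hence no $p$-torsion, and so $x$ is forced into that definable hull. Generosity plus connectedness of $C(T)$, which is all your final paragraph invokes, cannot substitute, precisely because those tools never exclude a copy of $\mathbb{Z}_{p^\infty}\oplus\mathbb{Z}/p$ occurring inside a connected group. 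In short: your diagnosis of the hypothesis is right and worth recording, but the proposal is a research plan rather than a proof, and the missing step is a deep theorem --- which is exactly why the paper imports this statement as a Fact instead of proving it.
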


\subsection{Strongly real elements}
We end this section with the Brauer-Fowler Theorem for groups of finite Morley rank.

\begin{definition}
An element of a group is said to be \textdef{strongly real} if it is the product of two involutions.
\end{definition}

Note that an element $r$ is strongly real if and only if it is inverted by some involution that is not equal to $r$.

\begin{fact}[\protect{\cite[Theorem~10.3]{BoNe94}}]\label{fact.BrauerFowler}
For every involution $i$ of a group of finite Morley rank $G$ there is a nontrivial strongly real element $r$ for which \[\rk G \le \rk C(r) + 2\cdot\rk C(i).\]
\end{fact}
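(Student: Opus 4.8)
The plan is to adapt the classical Brauer--Fowler counting argument to the rank setting, replacing cardinality estimates by the additivity of Morley rank. First I would fix the conjugacy class $C := i^G$ of the involution $i$. Since $C$ is definable with $\rk C = \rk G - \rk C(i)$, and since the product of any two involutions is strongly real, the multiplication map
\[
\mu\colon C\times C \to G, \qquad (a,b)\mapsto ab,
\]
has image $R$ consisting entirely of strongly real elements, with $1\in R$ arising from the diagonal.

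The key computation is the fiber analysis. Suppose $r\in R$ is nontrivial and $ab=r$ with $a,b\in C$. Since $a$ is an involution, $b=a^{-1}r=ar$ is determined by $a$, and moreover $a$ inverts $r$ because $ara = a(ab)a = ba = r^{-1}$. Conversely, any involution $a\in C$ that inverts $r$ and satisfies $ar\in C$ yields the point $(a,ar)\in\mu^{-1}(r)$. Thus $\mu^{-1}(r)$ is in definable bijection with a subset of the set of elements inverting $r$; as the latter is a single coset of $C(r)$, I obtain the bound $\rk\mu^{-1}(r)\le \rk C(r)$.

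Next I would run the rank count. The domain has rank $2\rk C = 2(\rk G - \rk C(i))$, while $R\subseteq G$ has rank at most $\rk G$. Assuming $i$ is noncentral (so $\rk C\ge 1$), the fiber over the identity, namely the diagonal $\{(a,a):a\in C\}$, has rank $\rk C < 2\rk C$; hence the complement $\mu^{-1}(R\setminus\{1\})$ is generic in $C\times C$ and still has rank $2\rk C$. Applying the additivity of Morley rank to $\mu$ restricted to this complement produces a nontrivial $r\in R$ with
\[
\rk\mu^{-1}(r)\ \ge\ 2\rk C - \rk R\ \ge\ 2(\rk G-\rk C(i)) - \rk G\ =\ \rk G - 2\rk C(i).
\]
Combining with the fiber bound $\rk\mu^{-1}(r)\le \rk C(r)$ gives $\rk G\le \rk C(r)+2\rk C(i)$, as desired.

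The main obstacle, and the place where care is needed, is guaranteeing that the large fiber sits over a \emph{nontrivial} strongly real element; this is exactly what $\rk C\ge 1$ secures by pushing the identity fiber into strictly lower rank. The only genuinely separate case is when $i$ is central, where $\rk C(i)=\rk G$ renders the displayed bound trivial, so the content there reduces to the mere existence of some nontrivial strongly real element. Beyond this, the remaining work is bookkeeping: checking definability of $C$, of $R$, and of the set of elements inverting a fixed $r$, and invoking the rank additivity formula in the form ``some fiber has rank at least $\rk(\text{domain})-\rk(\text{image})$.''
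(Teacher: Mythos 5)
Your core argument is the classical Brauer--Fowler counting argument, and since the paper does not prove this Fact at all (it is quoted from \cite[Theorem~10.3]{BoNe94}), there is no internal proof to compare against; your proposal is presumably the same argument as the cited source's. In the main case it is correct: $\rk C = \rk G - \rk C(i)$ for $C = i^G$, the fiber of $\mu$ over a nontrivial $r$ injects via $(a,b)\mapsto a$ into a coset of $C(r)$ (your computation that $a$ inverts $r$ is right, and the inverting set is indeed a coset of $C(r)$), and additivity of rank applied to $\mu$ restricted off the identity fiber produces a nontrivial strongly real $r$ with $\rk C(r) \ge 2\rk C - \rk G$. Note that this part of your argument needs only $\rk C \ge 1$, and it yields the required $r$ even when the inequality $\rk C(r)\ge 2\rk C-\rk G$ is vacuous.

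There are, however, two problems with your case division. First, the parenthetical ``noncentral (so $\rk C\ge 1$)'' is false: a noncentral involution can have a finite conjugacy class, i.e.\ $\rk C(i)=\rk G$ with $C(i)\neq G$. This is harmless --- if $|C|\ge 2$ but $\rk C=0$, the bound is trivial and the product of two distinct elements of $C$ is a nontrivial strongly real element --- but the correct dichotomy is $\rk C(i)<\rk G$ versus $\rk C(i)=\rk G$, not noncentral versus central. Second, and genuinely: in the remaining case, where $i$ is the \emph{unique} involution of $G$ (so central), you reduce the statement to ``the mere existence of some nontrivial strongly real element'' and stop. That existence claim is not only unproven, it is false under the paper's definition of strongly real (a product of two involutions, equivalently an element inverted by an involution distinct from itself): for $K$ algebraically closed of characteristic $\neq 2$, the groups $K^\times$ and $\ssl_2(K)$ have a unique involution ($-1$, resp.\ $-I$), and the only product of two involutions is the identity, so they contain no nontrivial strongly real elements whatsoever. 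Thus this case cannot be completed as stated; one must either read ``strongly real'' inclusively so that an involution counts as strongly real (being inverted by itself), whereupon $r=i$ settles the central case trivially, or add a hypothesis guaranteeing at least two involutions. This is arguably a defect of the Fact as transcribed rather than of your counting argument --- and the paper's sole application, Lemma~\ref{lem.BF}, concerns a simple group of rank $4$ where $\rk C(i)\le 2<\rk G$, so it lives entirely in the case you do handle --- but as a proof of the statement as written, the gap is real.
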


\section{Some permutation group theory}

As mentioned in the introduction, Theorem~\ref{thm.A} reduces to a study of virtually definably primitive permutation groups with connected point stabilizers of rank $2$. The focus of this section is the case of abelian point stabilizers; this is addressed by Proposition~\ref{prop.VprimRankAbelian}. We then conclude the section with some analogs of Proposition~\ref{prop.VprimRankAbelian} for the nonabelian case.

\subsection{Primitivity}
We begin by recalling the essential definitions. For more information on the various notions of primitivity, we refer to \cite{BoCh08}.

\begin{definition}
Assume that a group $G$, a set $X$, and an action of $G$ on $X$ are all definable in some ambient structure. We say that the action is \textdef{definably primitive} if every definable (with respect to the ambient structure) $G$-invariant equivalence relation is either trivial or universal; where as, we call the action \textdef{virtually definably primitive} if every definable $G$-invariant equivalence relation either has finite classes or finitely many classes. 
\end{definition}

As with the usual notion of primitivity, the above two analogs can be described in terms of subgroups of $G$.

\begin{fact}[\protect{\cite[Lemma~1.13]{BoCh08}}]
Let $(X,G)$ be a transitive permutation group definable in some ambient structure, and fix $x\in X$. Then 
\begin{enumerate}
\item $(X,G)$ is definably primitive if and only if $G_x$ is a maximal definable subgroup of $G$, and 
\item $(X,G)$ is virtually definably primitive if and only if for every definable subgroup $H$ with $G\ge H\ge G_x$ either $|G:H|$ or $|H:G_x|$ is finite. 
\end{enumerate}
\end{fact}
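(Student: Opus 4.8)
The plan is to prove both items at once by setting up the standard Galois-type correspondence between $G$-invariant equivalence relations on $X$ and the subgroups $H$ with $G_x \le H \le G$, and then checking that this correspondence respects definability and translates the two finiteness conditions on classes into the two index conditions.

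First I would construct the correspondence. To a subgroup $H$ with $G_x \le H \le G$ I attach the relation $E_H$ given by $gx \mathrel{E_H} g'x \iff gH = g'H$; the inclusion $G_x \le H$ makes this independent of the chosen coset representatives, and transitivity of the action shows every pair in $X^2$ is of the form $(gx, g'x)$, so $E_H$ is a genuine $G$-invariant equivalence relation. Conversely, to a $G$-invariant equivalence relation $E$ I attach $H_E := \{g \in G : (gx, x) \in E\}$; $G$-invariance shows that $H_E$ is a subgroup, that $G_x \le H_E$, and that $H_E$ is exactly the setwise stabilizer of the $E$-class of $x$. A routine check confirms that $H \mapsto E_H$ and $E \mapsto H_E$ are mutually inverse.

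Next I would record the two numerical dictionary entries. The $E_H$-class of $x$ is the block $Hx$, and $hG_x \mapsto hx$ is a bijection $H/G_x \to Hx$; since the action is transitive every class is a translate of $Hx$ and hence has the same cardinality, so the classes are finite precisely when $|H : G_x|$ is finite. The blocks $gHx$ are in bijection with the cosets $gH$, so the number of classes is $|G : H|$, finite precisely when $|G : H|$ is. In particular the trivial (equality) relation corresponds to $H = G_x$ and the universal relation to $H = G$.

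The step demanding the most care is the transfer of definability in both directions, since both notions of primitivity range only over \emph{definable} relations and subgroups. If $H$ is definable, then $E_H$ is definable because the orbit map $G \to X$, $g \mapsto gx$, is a definable surjection whose fibres are the left cosets of $G_x$, and, as $G_x \le H$, the definable relation ``same left coset of $H$'' on $G$ descends along this map to $E_H$ (definable by existential quantification). If $E$ is definable, then $H_E = \{g : (gx, x) \in E\}$ is immediately definable. Granting this, the two statements follow by reading off the correspondence: definable primitivity says the only definable invariant relations are the trivial and universal ones, i.e. the only definable $H$ between $G_x$ and $G$ are $G_x$ and $G$, i.e. $G_x$ is a maximal definable subgroup; and virtual definable primitivity says every definable invariant relation has finite classes or finitely many classes, i.e. every definable $H$ with $G \ge H \ge G_x$ has $|H : G_x|$ finite or $|G : H|$ finite. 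The real content is thus bookkeeping: the classical block--stabilizer correspondence together with the definability transfer, with no single hard inequality to overcome.
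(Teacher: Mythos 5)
Your proposal is correct, but note that there is nothing in the paper to compare it against: the statement is imported as a Fact, cited to \cite[Lemma~1.13]{BoCh08}, and the paper gives no proof of it. Your argument is the standard block--stabilizer correspondence, and all the pieces check out: $E_H$ is well defined exactly because $G_x \le H$, $H_E$ is the setwise stabilizer of the $E$-class of $x$, the two maps are mutually inverse, the class sizes and class counts are $|H:G_x|$ and $|G:H|$ respectively, and the definability transfer in both directions is exactly as you say (existential quantification along the definable orbit map $g \mapsto gx$ in one direction, pullback of $E$ under $g \mapsto (gx,x)$ in the other), so the correspondence restricts to a bijection between definable invariant equivalence relations and definable subgroups in the interval $[G_x, G]$. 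The only caveat worth recording is the degenerate case $G_x = G$ (i.e.\ $X$ a single point), where ``maximal definable subgroup'' fails vacuously while definable primitivity holds trivially; this is a standard convention issue, not a gap in the argument.
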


A \textdef{quotient} of a permutation group $(X,G)$ is any permutation group of the form $(X/{\sim},G/K)$ with ${\sim}$ a $G$-invariant equivalence relation on $X$ and $K$ the kernel of the (induced) action of $G$ on $X/{\sim}$. We often also refer to $X/{\sim}$ together with the (not necessarily faithful) action of $G$ as a quotient of $(X,G)$, which is more-or-less harmless. An important observation to make is that every transitive permutation group of finite Morley rank has a virtually definably primitive quotient, which corresponds to a proper definable subgroup of maximal rank containing a point stabilizer.  The following fact says that we can often find a quotient that is in fact definably primitive. 

\begin{fact}[\protect{\cite[Lemma~1.18]{BoCh08}}]\label{fact.primQuotient}
Let $(X,G)$ be a transitive and virtually definably primitive permutation group of finite Morley rank with infinite point stabilizers. Then $(X,G)$ has a nontrivial (but not necessarily faithful) definably primitive quotient. 
\end{fact}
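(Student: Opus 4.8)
The plan is to translate the statement into the lattice of definable subgroups of $G$ and then locate a maximal one. Recall that the quotients of a transitive $(X,G)$ correspond to definable subgroups $H$ with $G_x \le H \le G$: the block through $x$ is the orbit $Hx$, the quotient set is equivalent to the coset space $G/H$, and the kernel of the induced action is the core $\bigcap_{g\in G} gHg^{-1}$. By the subgroup-theoretic description of definable primitivity recalled above, the quotient attached to $H$ is definably primitive exactly when $H/\mathrm{core}(H)$ is a maximal definable subgroup of $G/\mathrm{core}(H)$, and by the correspondence theorem (since $\mathrm{core}(H)\le H$) this holds precisely when $H$ is a maximal \emph{proper} definable subgroup of $G$. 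As the quotient is nontrivial as soon as $H\neq G$, it suffices to produce a maximal proper definable subgroup $M$ with $G_x \le M < G$.

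Here I would use virtual definable primitivity to sort the intermediate subgroups. For every definable $H$ with $G_x \le H \le G$ the hypothesis yields that either $|G:H|$ is finite or $|H:G_x|$ is finite; in the first case $\rk H = \rk G$, while in the second $\rk H = \rk G_x$ and in fact $H^\circ = G_x^\circ$ (equal-rank connected subgroups related by inclusion coincide). I would first dispose of the easy case in which some proper $H \ge G_x$ has finite index in $G$ (this includes the situation where $X$ is finite, taking $H = G_x$): then every definable subgroup between $H$ and $G$ is a union of finitely many cosets of $H$, so there are only finitely many of them and a maximal proper one $M$ exists; any proper definable overgroup of $M$ would again lie between $H$ and $G$, so $M$ is maximal in $G$ and yields a nontrivial (finite) definably primitive quotient.

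The remaining, and main, case is when no proper definable subgroup containing $G_x$ has finite index in $G$; then every proper definable $H \ge G_x$ satisfies $H^\circ = G_x^\circ$ and hence normalizes $G_x^\circ$, so $H \le N := N_G(G_x^\circ)$. The crux is to rule out $N = G$: if $G_x^\circ$ were normal in $G$, then transitivity would give $G_x^\circ = gG_x^\circ g^{-1} \le gG_x g^{-1} = G_{gx}$ for all $g$, so $G_x^\circ$ would fix every point of $X$, and faithfulness of the permutation action would force $G_x^\circ = 1$, contradicting the assumption that point stabilizers are infinite. Hence $N$ is proper, and by the previous paragraph it contains every proper definable subgroup lying above $G_x$; thus it is the largest such subgroup and therefore maximal. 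Since $\rk N = \rk G_x < \rk G$, the associated quotient $G/N$ is infinite, giving the desired nontrivial definably primitive quotient. The only genuine obstacle is this last case: everything hinges on recognizing $N_G(G_x^\circ)$ as a ceiling for the intermediate subgroups and on using faithfulness together with the infiniteness of point stabilizers to keep that ceiling proper.
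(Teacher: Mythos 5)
The paper itself does not prove this statement: it is imported as a black box from Borovik--Cherlin (\cite[Lemma~1.18]{BoCh08}), so there is no internal proof to compare yours against; I can only assess your argument on its own terms, and it checks out. The dictionary between definable quotients and definable subgroups $H$ with $G_x \le H \le G$ is used correctly, and the reduction of ``the quotient attached to $H$ is definably primitive'' to ``$H$ is a maximal proper definable subgroup of $G$'' is legitimate, since every definable subgroup containing $H$ automatically contains the core, so the correspondence theorem applies. The finite-index case is fine (finitely many intermediate subgroups, hence a maximal proper one). In the main case, the three key assertions all hold: $|H:G_x|<\infty$ forces $H^\circ = G_x^\circ$ (the connected component $H^\circ$ has no proper definable finite-index subgroups, so $H^\circ \le G_x$, and then $H^\circ = G_x^\circ$), so every proper definable overgroup of $G_x$ lies in $N := N_G(G_x^\circ)$; $N$ is proper because normality of $G_x^\circ$ in $G$ plus transitivity and faithfulness would give $G_x^\circ \le \bigcap_{y\in X} G_y = 1$, contradicting infiniteness of point stabilizers; and a largest proper definable subgroup above $G_x$ is in particular a maximal one. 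A pleasant byproduct worth noting: in your main case $|N : G_x|$ is finite, so the quotient you construct has \emph{finite classes}, which is precisely the refined form in which the Fact is invoked in the proof of Proposition~\ref{prop.VprimRankAbelian} (there $G$ is connected, so your finite-index case cannot occur, and your construction recovers exactly what the paper needs). In short: correct, self-contained, and it makes explicit a use of faithfulness and of the infinite-stabilizer hypothesis that the paper leaves hidden inside the citation.
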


We now collect a handful of remarks on definably primitive groups. 

\begin{lemma}\label{lem.DefPrim}
Assume that $(X,G)$ is a definably primitive permutation group. Let $x,y\in X$ be distinct, and assume that $G_x \neq 1$. Then
\begin{enumerate}
\item $G_x \neq G_y$, and
\item $Z(G_x)\cap Z(G_y) = 1$.
\end{enumerate}
Further, if $(X,G)$ has finite Morley rank and $G^\circ_x \neq 1$, then $G^\circ_x \neq G^\circ_y$, so in this case, $G_x$ has a unique orbit of rank $0$, namely $\{x\}$.
\end{lemma}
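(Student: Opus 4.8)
The plan is to exploit the definable primitivity of $(X,G)$, which by the cited characterization tells us that $G_x$ is a \emph{maximal} definable subgroup of $G$. Every statement in the lemma is ultimately a consequence of this maximality together with the fact that point stabilizers of distinct points are conjugate but distinct.

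First I would prove item (1). Suppose toward a contradiction that $G_x = G_y$ for distinct $x,y$. Consider the setwise stabilizer, or more simply the subgroup $N_G(G_x)$. Since $G_x$ is maximal definable and $G_x \neq 1$, the normalizer $N_G(G_x)$ is a definable subgroup containing $G_x$, so by maximality it is either $G_x$ itself or all of $G$. The latter would make $G_x$ normal in the transitive group $G$, forcing $G_x$ to act trivially (as the kernel of a transitive action is the intersection of all conjugates of a point stabilizer, and a normal point stabilizer equals all its conjugates), contradicting faithfulness and $G_x \neq 1$; so $N_G(G_x) = G_x$. Now the condition $G_x = G_y$ means $G_x$ fixes $y$, and by transitivity write $y = g\cdot x$ for some $g \in G$, whence $G_y = g G_x g^{-1}$. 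Then $G_x = G_y = gG_xg^{-1}$ forces $g \in N_G(G_x) = G_x$, so $g$ fixes $x$, giving $y = g\cdot x = x$, a contradiction. This establishes (1).

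For item (2), I would argue that $Z(G_x) \cap Z(G_y)$ is centralized by both $G_x$ and $G_y$, hence is normalized by the definable group $\langle G_x, G_y\rangle$. Since $G_x$ is maximal and $G_x \neq G_y$ by (1), the subgroup $\langle G_x, G_y\rangle$ properly contains $G_x$ and is therefore all of $G$. Thus $Z(G_x)\cap Z(G_y)$ is a subgroup centralized by all of $G$, i.e. contained in $Z(G)$; but the kernel of a faithful transitive action meets each point stabilizer trivially in its center in the required sense — more directly, any element of $Z(G)$ fixing a point lies in the kernel $\bigcap_{g} gG_xg^{-1} = 1$, so $Z(G)\cap G_x = 1$ and in particular $Z(G_x)\cap Z(G_y) \leq Z(G) \cap G_x = 1$.

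For the final statement, in the finite Morley rank setting with $G^\circ_x \neq 1$, I would run the connected analogue of the argument for (1): the connected component $G^\circ_x$ is a definable connected normal subgroup of $G_x$, and I expect $G^\circ_x = G^\circ_y$ to again force, via a normalizer computation now inside $G$ using that $N_G(G^\circ_x)$ is definable and contains the maximal subgroup $G_x$, that $g$ normalizing $G^\circ_x$ lands back in a stabilizer, contradicting $x \neq y$. The main obstacle I anticipate is this last step: unlike (1), the equality $G^\circ_x = G^\circ_y$ does not immediately say $g \in G_x$, only $g \in N_G(G^\circ_x)$, so I must verify that $N_G(G^\circ_x)$ is small enough — likely by showing it equals $G_x$ via maximality (it contains $G_x$ and cannot be all of $G$ lest $G^\circ_x$ be normal and hence trivial by faithfulness) — and then conclude that $G^\circ_x = G^\circ_y$ cannot happen for distinct $x,y$, whence $\{x\}$ is the only rank-$0$ orbit of $G_x$ since any other fixed point $y$ would share the connected stabilizer.
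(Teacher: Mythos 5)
Your item (2), as written, has a genuine flaw: you assert that $\langle G_x,G_y\rangle$ is ``the definable group'' generated by the two stabilizers and then apply maximality to it. But the maximality coming from definable primitivity is maximality among \emph{definable} subgroups only, and the subgroup generated by two definable subgroups need not be definable --- not in the general ambient structures for which items (1) and (2) are stated, and not even in finite Morley rank: in $\psl_2(\mathbb{C})$, two involutions $i,j$ with $ij$ of infinite order generate an infinite countable dihedral group, which is not definable, even though $\{1,i\}$ and $\{1,j\}$ are definable subgroups. So from ``$\langle G_x,G_y\rangle$ properly contains $G_x$'' you cannot conclude that it equals $G$. The repair is exactly the move the paper makes: fix $g\in Z(G_x)\cap Z(G_y)$ and work with the centralizer $C(g)$, which \emph{is} definable and contains both $G_x$ and $G_y$; by maximality of each stabilizer together with item (1), $C(g)=G$, so $g\in Z(G)$, and then your kernel argument ($Z(G)\cap G_x=1$ by transitivity and faithfulness) gives $g=1$. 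Equivalently, one can use the definable subgroup $C_G\bigl(Z(G_x)\cap Z(G_y)\bigr)$.

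The rest of your proposal is sound. Item (1) in fact takes a different route from the paper: the paper simply observes that ``$a\sim b$ if and only if $G_a=G_b$'' is a definable $G$-invariant equivalence relation, so definable primitivity forces its classes to be singletons (a universal class would make every $G_a$ equal to the kernel, hence trivial); your normalizer-plus-conjugacy argument reaches the same conclusion from the maximal-subgroup characterization, at the cost of the extra step $N(G_x)=G_x$. Your treatment of the connected components is essentially the paper's, which notes that $G^\circ_x=G^\circ_y$ would put both $G_x$ and $G_y$ inside the definable subgroup $N(G^\circ_x)$, forcing $G^\circ_x$ to be normal and hence trivial. One last small imprecision: in your closing sentence, a rank-$0$ orbit of $G_x$ is a \emph{finite} orbit, not necessarily a fixed point; for $y\neq x$ in such an orbit, $G_x\cap G_y$ has finite index in $G_x$, so $G^\circ_x\le G^\circ_y$, and since $G_x$ and $G_y$ are conjugate these connected groups have equal rank, whence $G^\circ_x=G^\circ_y$ --- which is precisely what you have excluded.
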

\begin{proof}
Notice that ``$a\sim b$ if and only if $G_a = G_b$'' is a definable equivalence relation on $X$. Thus, definable primitivity and nontrivial point stabilizers imply that point stabilizers must be pairwise distinct. 

To see that $Z(G_x)\cap Z(G_y) = 1$, recall that definable primitivity implies that point stabilizers are maximal proper definable subgroups. 
Thus, if $g\in Z(G_x)\cap Z(G_y)$, then $C(g) \ge \langle G_x, G_y \rangle$, so $g\in Z(G)$. Since $g$ fixes a point and $G$ is transitive on $X$, we find that $g= 1$. Similarly, if $G$ has finite Morley rank and $G^\circ_x = G^\circ_y$,  then $N(G^\circ_x) \ge \langle G_x, G_y \rangle$, so $G^\circ_x$ is normal in $G$. This forces  $G^\circ_x=1$. 
\end{proof}

\subsection{Generically regular subgroups}
This subsection is devoted solely to the following rather general (and rather useful) connectedness lemma. 

\begin{lemma}\label{lem.ConnectedPS}
Assume that $(X,G)$ is a transitive permutation group of finite Morley rank with $G$ connected. If some definable subgroup of $G$ has a regular and generic orbit on $X$, then  all $1$-point stabilizers of $G$ are connected.  
\end{lemma}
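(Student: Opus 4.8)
The plan is to reduce to a single point stabilizer and then exploit a multiplication map whose injectivity is forced by the regularity of the orbit. Since $G$ is transitive, all $1$-point stabilizers are conjugate, so it suffices to prove that $G_x$ is connected for one well-chosen $x$. Let $H \le G$ be the definable subgroup whose orbit $\orbit$ on $X$ is regular and generic, and fix $x \in \orbit$. Regularity gives $H_x = 1$ together with a bijection $H \to \orbit$, $h \mapsto hx$, so that $\rk H = \rk \orbit$; genericity gives $\rk \orbit = \rk X$. Combining this with the orbit--stabilizer rank identity $\rk G = \rk X + \rk G_x$ for the transitive action of $G$, I obtain the crucial numerical relation $\rk H + \rk G_x = \rk G$.

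The key step is to analyze the multiplication map $\mu\colon H \times G_x \to G$, $(h,s) \mapsto hs$. First I would observe that $H \cap G_x = H_x = 1$, again by regularity; this makes $\mu$ injective, since $h_1 s_1 = h_2 s_2$ forces $h_2^{-1} h_1 = s_2 s_1^{-1} \in H \cap G_x = 1$. Hence the image $H G_x$ has rank $\rk H + \rk G_x = \rk G$ and is generic in $G$. Now suppose, toward a contradiction, that $G_x$ is disconnected, and write $G_x = \bigsqcup_{i=1}^{k} G_x^\circ c_i$ as a disjoint union of cosets of its connected component, with $k \ge 2$. Because $\mu$ is injective, the translates $H G_x^\circ c_i$ are pairwise disjoint, and each has rank $\rk H + \rk G_x^\circ = \rk H + \rk G_x = \rk G$, so each is generic in $G$.

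The contradiction then comes from connectedness: a generic definable subset of a connected group of finite Morley rank has complement of strictly smaller rank, so two generic subsets cannot be disjoint. As $H G_x^\circ c_1$ and $H G_x^\circ c_2$ are disjoint and generic in the connected group $G$, this is impossible; hence $k = 1$ and $G_x$ is connected. Transporting by conjugation then yields connectedness of every $1$-point stabilizer. I expect the only real obstacle to be the bookkeeping that makes $\mu$ injective --- that is, recognizing that regularity of the orbit is exactly what forces $H \cap G_x = 1$ --- together with the care needed to see that each disjoint translate $H G_x^\circ c_i$ remains generic; once those are in place, the degree-one property of the connected group $G$ closes the argument immediately.
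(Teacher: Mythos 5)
Your proposal is correct and follows essentially the same route as the paper: both arguments use $H\cap G_x=1$ to get a definable bijection between $H\times G_x$ and the product set $HG_x$, compute that this set is generic in $G$ via the orbit--stabilizer rank identity, and then invoke connectedness of $G$ to force $G_x$ to be connected. The only difference is cosmetic --- you spell out the final step (pairwise disjoint generic translates $HG_x^\circ c_i$ contradicting degree one) that the paper leaves implicit.
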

\begin{proof}
Assume that $H$ is a definable subgroup with a regular and generic orbit, and choose $x$ in the orbit. Set $A:=G_xH$. Now, $A$ is a definable set, and since $G_x\cap H = 1$, every element of $A$ has unique representation as $gh$ with $g\in G_x$ and $h\in H$. Thus, there is a definable bijection between $A$ and $G_x\times H$. Further, as the orbit of $H$ on $x$ is generic, we find that $\rk H = \rk G-\rk G_x$. Thus, $A$ is generic in $G$, and since $G$ is connected, it must be that $G_x$ is connected as well.
\end{proof}

\subsection{Abelian point stabilizers}
The goal of this subsection is to show that virtually definably primitive actions with sufficiently large abelian point stabilizers are of one flavor; this is Proposition~\ref{prop.VprimRankAbelian}. The result is not surprising, but there are a handful of details to address. We begin with a slight generalization of a well know result about $2$-transitive groups.

\begin{lemma}\label{lem.primGenTwoTransAbelian}
If $(X,G)$ is an infinite definably primitive and generically $2$-transitive permutation group of finite Morley rank with abelian point stabilizers, then $(X,G) \cong (K,\aff_1(K))$ for some algebraically closed field $K$. 
\end{lemma}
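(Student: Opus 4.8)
The plan is to show that the abelian stabilizers act generically freely, to pass to a regular normal ``translation'' subgroup, and then to extract a field so that $X$ becomes strongly minimal and Fact~\ref{fact.Hru} finishes the job.

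First I would record the basic reductions. Since the action is generically $2$-transitive, $G_x$ has a generic orbit $\mathcal{O}$ on $X$; as $G_x$ is abelian and transitive on $\mathcal{O}$, it is \emph{regular} on $\mathcal{O}$. Hence $G_x$ has a regular generic orbit, the generic two-point stabilizers are trivial, and $\rk G_x = \rk X$ while $\rk G = 2\rk X$. Examining the $G^\circ$-orbit equivalence relation (which is definable and $G$-invariant with finitely many classes) shows, via definable primitivity and faithfulness, that $G^\circ$ is transitive; I may then apply Lemma~\ref{lem.ConnectedPS} to $(X,G^\circ)$ to conclude that all point stabilizers are connected, so in particular $G_x$ is connected. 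Finally, definable primitivity makes $G_x$ a maximal definable subgroup, so every definable subgroup properly containing $G_x$ equals $G$. Applying this to $C_G(S)$ and $N_G(S)$ for nontrivial connected $S \le G_x$, and using that a central or normal subgroup fixing a point is trivial by transitivity (cf. Lemma~\ref{lem.DefPrim}), I obtain $C_G(G_x) = N_G(G_x) = G_x$; that is, $G_x$ is a connected, self-centralizing, self-normalizing abelian subgroup.

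Next I would produce the translation subgroup. Choose a minimal infinite definable connected normal subgroup $M$ of $G$. As $M$ is normal, its orbits form a definable $G$-invariant equivalence relation, so definable primitivity forces $M$ to be transitive on $X$. The crucial claim is that $M$ is abelian and regular. Granting abelianness, transitivity of the abelian group $M$ forces regularity, so $M_x = 1$ and $M$ is a regular normal abelian subgroup $N$ with $G = N \rtimes G_x$ and $X \cong N$; moreover, maximality of $G_x$ translates into $N$ being $G_x$-minimal, since a proper nontrivial $G_x$-invariant connected subgroup of $N$ would yield a definable subgroup strictly between $G_x$ and $G$. With the configuration $N \rtimes G_x$ in hand — $N$ abelian and $G_x$-minimal, $G_x$ connected abelian acting faithfully — Zilber's Field Theorem supplies an interpretable algebraically closed field $K$ with $N \cong K^+$ and $G_x$ embedded in $K^\times$ acting by scalar multiplication. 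Since infinite fields of finite Morley rank have rank $1$, this forces $\rk X = \rk N = \rk K = 1$, and $X$ has degree $1$ because $G^\circ$ is transitive with connected stabilizers. Now $\rk G^\circ = 2 > 1$, so Fact~\ref{fact.Hru} applies to $(X,G^\circ)$ and yields $(K,\aff_1(K))$ or $(\proj^1(K),\psl_2(K))$; the latter has a nonabelian (Borel) point stabilizer and is excluded by our hypothesis. Thus $(X,G^\circ) \cong (K,\aff_1(K))$, whence $G_x = G^\circ_x \cong K^\times$ (a connected rank-$1$ group, pinned down by Corollary~\ref{cor.DivisibleFiniteQuotient} if needed), and $G = G^\circ G_x = G^\circ$ since $G^\circ$ is transitive, giving $(X,G) \cong (K,\aff_1(K))$.

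The hard part will be the construction of $N$, i.e. excluding a nonabelian minimal normal $M$. Such an $M$ is perfect with finite center and acts transitively with abelian self-centralizing point stabilizer $M_x \le G_x$; a rank count shows that $M$ is itself generically $2$-transitive only if $M_x = G_x$, forcing $M = G$ to be quasisimple. I would therefore split into the case $M < G$ proper — where $M$ fails to be generically $2$-transitive and the abelian self-centralizing stabilizer should be incompatible with $M$ being perfect — and the case where $G$ is quasisimple and generically $2$-transitive with abelian point stabilizer of rank $>1$, which I expect to rule out directly using Fact~\ref{fact.Hru} together with the self-centralizing condition. Everything outside this dichotomy is bookkeeping with ranks and the maximality of $G_x$.
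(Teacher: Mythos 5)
Your proposal has a genuine gap at exactly the point you yourself flag as ``the hard part'': excluding a nonabelian minimal normal subgroup $M$, and in particular the case where $G$ is quasisimple and generically $2$-transitive with abelian self-centralizing point stabilizers. You only say this ``should be incompatible'' and that you ``expect'' to rule it out using Fact~\ref{fact.Hru} plus self-centralization, but Fact~\ref{fact.Hru} says nothing unless $\rk X=1$; for $\rk X\ge 2$ the configuration you must kill --- e.g.\ a simple group of rank $4$ acting definably primitively on a rank-$2$ coset space with abelian rank-$2$ stabilizers --- is precisely one of the configurations this paper is built to eliminate, and the paper's tool for doing so, Corollary~\ref{cor.SimpleMaxAbelian}, is itself a consequence of this lemma via Proposition~\ref{prop.VprimRankAbelian}, so any appeal in that direction is circular. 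Separately, your endgame rests on a false assertion: an interpretable field of finite Morley rank need \emph{not} have rank $1$ --- this is the bad field phenomenon, and characteristic-$0$ bad fields (rank-$2$ fields with a rank-$1$ definable subgroup of $K^\times$) exist. That flaw is repairable: from $N\cong K^+$, $G_x\hookrightarrow K^\times$, and $\rk G_x=\rk N=\rk K^\times$, the image of $G_x$ is a definable finite-index subgroup of the divisible, hence connected, group $K^\times$, so it equals $K^\times$ and $(X,G)\cong(K,\aff_1(K))$ follows with no rank-$1$ claim and no second use of Fact~\ref{fact.Hru}. (A smaller issue of the same kind: ``abelian and transitive on $\orbit$ implies regular on $\orbit$'' requires faithfulness of $G_x$ on $\orbit$, which needs an argument.) But the normal-subgroup dichotomy is where your proof actually lives, and it is missing.

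The irony is that the hypotheses collapse the problem if you use Lemma~\ref{lem.DefPrim} at full strength, which is what the paper does. Since $G_x$ is abelian, $Z(G_x)=G_x$, so Lemma~\ref{lem.DefPrim}(2) gives $G_x\cap G_y=1$ for every $y\neq x$: the action of $G_x$ on $X-\{x\}$ is free outright, not merely generically free. Freeness forces every $G_x$-orbit on $X-\{x\}$ to have rank exactly $\rk G_x$, and since generic $2$-transitivity gives $X$ degree $1$, there can be only one orbit of full rank; hence $G_x$ is transitive on $X-\{x\}$ and $(X,G)$ is sharply $2$-transitive. The identification with $(K,\aff_1(K))$ is then a citation to \cite[Proposition~11.61]{BoNe94}. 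No minimal normal subgroup, no Zilber field theorem, and no case analysis are needed; the entire content of your ``hard part'' is absorbed by the observation that definable primitivity plus abelian stabilizers yields freeness everywhere.
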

\begin{proof}
Let $x\in X$. By Lemma~\ref{lem.DefPrim}, $G_x$ acts freely on $X-\{x\}$. Thus, every orbit of $G_x$ on $X-\{x\}$ has the same rank, so generic $2$-transitivity implies that $G_x$ is transitive on $X-\{x\}$. Hence, $(X,G)$ is sharply $2$-transitive, so $(X,G)$ is equivalent to $(K,\aff_1(K))$ for some algebraically closed field $K$; see  \cite[Proposition~11.61]{BoNe94} for example. 
\end{proof}

The next lemma provides a connectedness result essential for our proof of Proposition~\ref{prop.VprimRankAbelian}.

\begin{lemma}\label{lem.GenTwoTransAbelian}
Let $(X,G)$ be an infinite transitive and generically $n$-transitive permutation group of finite Morley rank with $n\ge 2$. If $(x_1,\ldots,x_n)$ is in the generic orbit of $G$ on $X^n$ and $H:=G_{x_1,\ldots,x_{n-1}}$ is abelian-by-finite, then $G$ is centerless, and $C(H^\circ) = H^\circ$. If, additionally, $G$ is connected, then $G_{x_1,\ldots,x_{k}}$ is connected for every $k<n$.
\end{lemma}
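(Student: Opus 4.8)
The plan is to extract everything from one geometric fact: that the connected abelian group $H^\circ$ has a generic orbit on $X$, namely $Y := H^\circ\cdot x_n$, on which it acts almost freely. First I would record that $H^\circ$ is abelian (the connected finite-index subgroup $H^\circ$ meets the abelian finite-index subgroup of $H$ in finite index, hence lies in it), so that $H^\circ \le C(H^\circ)$. To see that $Y$ is generic, I would project the generic orbit $\mathcal{O}\subseteq X^n$ onto its first $n-1$ coordinates: the image is the generic orbit of the induced (still generically $(n-1)$-transitive) action and contains $(x_1,\dots,x_{n-1})$, so the fibre over $(x_1,\dots,x_{n-1})$ has rank $\rk X$; that fibre is exactly the $H$-orbit $H\cdot x_n$, and since $[H:H^\circ]$ is finite, $Y=H^\circ\cdot x_n$ is generic as well.

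Next I would isolate the abelian mechanism. Writing $S:=(H^\circ)_{x_n}$, for any $y=h\cdot x_n\in Y$ with $h\in H^\circ$ one has $(H^\circ)_y = h\,S\,h^{-1}=S$ because $H^\circ$ is abelian; hence $S$ fixes all of $Y$ pointwise, and the same computation shows that any element of $C(H^\circ)$ fixing $x_n$ fixes $Y$ pointwise.

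The heart of the argument is $C(H^\circ)=H^\circ$. Since $C(H^\circ)$ normalises $H^\circ$ it permutes the $H^\circ$-orbits; using that $Y$ fills up a generic part of $X$, I would argue that $C(H^\circ)$ carries $Y$ into itself up to translation by $H^\circ$, so that $C(H^\circ)=H^\circ\cdot P$ where $P$ is the pointwise stabiliser of $Y$ inside $C(H^\circ)$. It then remains to prove $P\le H^\circ$, i.e. that an element centralising $H^\circ$ and fixing the generic set $Y$ pointwise already lies in $H^\circ$. \textbf{This is the step I expect to be the main obstacle}: I would attack it by playing faithfulness against the genericity of $Y$ — a nontrivial such element fixes a generic subset of $X$, and one must use the full force of generic $n$-transitivity (not merely the single fibre over $(x_1,\dots,x_{n-1})$) to spread this fixing across enough of $X$ to contradict faithfulness. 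Once $C(H^\circ)=H^\circ$ is in hand, centerlessness is immediate: any $z\in Z(G)$ centralises $H^\circ$, so $z\in H^\circ\le G_{x_1}$ fixes $x_1$; as $z$ is central and $G$ is transitive, $z$ fixes every point, whence $z=1$ by faithfulness.

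Finally, for the connectedness statement I would feed Lemma~\ref{lem.ConnectedPS} with a definable subgroup having a regular and generic orbit on $X$ — the natural candidate being $H^\circ$ itself, once the analysis above shows its generic orbit $Y$ to be regular — to conclude that all $1$-point stabilisers of the connected group $G$ are connected, in particular $G_{x_1}$. I would then descend the stabiliser chain by induction: the connected group $G_{x_1}$ acts on its relevant orbit and is generically $(n-1)$-transitive with the same abelian-by-finite deep stabiliser $H$, so applying the lemma to $G_{x_1}$ yields $G_{x_1,x_2}$ connected, and so on down to $G_{x_1,\dots,x_{n-1}}$. The bookkeeping here — ensuring the induced actions remain transitive and that a regular generic orbit persists at each stage — is routine relative to the obstacle above, to which it is ultimately tied.
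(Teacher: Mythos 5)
Your skeleton is sound and in fact follows the paper's own route: the generic $H$-orbit on $X$ (your $Y$, obtained by the same fibre-rank computation), the abelian trick showing that any centralizing element fixing one point of $Y$ fixes all of $Y$, the decomposition $C(H^\circ)=H^\circ\cdot P$, and finally Lemma~\ref{lem.ConnectedPS} plus iteration down the stabilizer chain. But the step you flag as ``the main obstacle'' is a genuine gap, and it is the load-bearing step of the entire lemma. What you need is: \emph{no nontrivial element of $G$ fixes a generic definable subset of $X$ pointwise}; equivalently, the setwise stabilizer $N(Y)$ acts faithfully on $Y$. Everything hinges on this single fact: it gives $P=1$ (hence $C(H^\circ)=H^\circ$, hence centerlessness), and it gives $S=(H^\circ)_{x_n}=1$, which is what makes the generic orbit $Y$ \emph{regular} --- and your connectedness argument via Lemma~\ref{lem.ConnectedPS} explicitly presupposes that regularity (``once the analysis above shows its generic orbit $Y$ to be regular''). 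So without this fact, none of the three conclusions of the lemma is reached.

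The paper closes this step by citation rather than by an ad hoc argument: generic $2$-transitivity forces $X$ to have Morley degree $1$ \cite[Lemma~1.8(3)]{BoCh08}, and then \cite[Lemma~1.6]{BoCh08} states exactly that the setwise stabilizer of a generic definable subset of a degree-$1$ set acts faithfully on it. Note that degree $1$ is also what you silently use to know that $Y$ is the \emph{unique} generic $H^\circ$-orbit, so that $C(H^\circ)$, which permutes the $H^\circ$-orbits, stabilizes $Y$ setwise; this should be made explicit. Your proposed line of attack --- invoking ``the full force of generic $n$-transitivity'' to spread the fixed-point set across $X$ --- is misdirected: the needed statement is a general one about transitive actions on degree-$1$ sets, and generic transitivity enters only through the degree computation. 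One smaller repair: from $[H^\circ : H^\circ\cap A]<\infty$ with $A\le H$ abelian of finite index you cannot conclude $H^\circ\le A$, since $A$ need not be definable; replace $A$ by its definable hull (still abelian, still of finite index) before applying connectedness of $H^\circ$.
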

\begin{proof}
First note that generic $2$-transitivity implies that $X$ has degree $1$, see \cite[Lemma~1.8(3)]{BoCh08}. Let $\orbit$ be the generic orbit of $H$ on $X$. By \cite[Lemma~1.6]{BoCh08}, $N(\orbit)$ acts faithfully on $\orbit$. Now, $\orbit$ is connected, so $H^\circ$ acts transitively on $\orbit$. Thus, $C(H^\circ)$, which by assumption includes $H^\circ$, acts regularly on $\orbit$, and we find that $C(H^\circ) = H^\circ$. Consequently, $Z(G) \le H$, so  faithfulness implies that $Z(G) = 1$.

Now assume that $G$ is connected. Since $H^\circ$ acts regularly on $\orbit$, Lemma~\ref{lem.ConnectedPS} shows that $G_{x_1}$ is connected. Clearly we can iterate this argument, using $H^\circ$ at each stage, to see that $G_{x_1,\ldots,x_{k}}$ is connected for every $k<n$.
\end{proof}

We can now generalize Lemma~\ref{lem.primGenTwoTransAbelian} to the virtually definably primitive setting; here we  replace the generically $2$-transitive assumption with a condition on the rank of a point stabilizer.

\begin{proposition}\label{prop.VprimRankAbelian}
Let $(X,G)$ be an infinite transitive and virtually definably primitive permutation group of finite Morley rank with abelian point stabilizers. If $G$ is connected and the point stabilizers have rank at least that of $X$, then $(X,G) \cong (K,\aff_1(K))$ for some algebraically closed field $K$. 
\end{proposition}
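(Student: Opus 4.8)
The plan is to reduce, via the definably primitive quotient, to the setting of Lemma~\ref{lem.primGenTwoTransAbelian}, and then to transfer the resulting equivalence back to $(X,G)$ using Hrushovski's theorem (Fact~\ref{fact.Hru}). Throughout, set $r := \rk X$; since $X$ is infinite, $r \ge 1$, and transitivity gives $\rk G = r + \rk G_x$ with $\rk G_x \ge r$. Because $G$ is connected and acts transitively, $X \cong G/G_x$ has Morley degree $1$, and hence so does every Cartesian power of $X$.

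First I would show that $(X,G)$ is already generically $2$-transitive and that $\rk G_x = r$. The relation ``$a\sim b$ iff $G_a^\circ = G_b^\circ$'' is a definable $G$-invariant equivalence relation; virtual definable primitivity forces either finite classes or finitely many classes, and the latter is impossible since $G$ connected and transitive would then make $G_x^\circ$ a nontrivial normal subgroup fixing a point, contradicting faithfulness. So the class of $x$ is finite. The key claim is that $G_x^\circ$ acts generically freely: if $a$ satisfies $\rk(G_x^\circ \cap G_a) \ge 1$, let $S := (G_x^\circ \cap G_a)^\circ \ne 1$; since $G_x$ and $G_a$ are abelian and both contain $S$, the definable subgroup $C(S)$ contains $\langle G_x, G_a\rangle$. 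Virtual definable primitivity applied to $C(S) \ge G_x$ gives either $C(S) = G$, whence $S \le Z(G) = 1$ (a contradiction), or $|C(S):G_x|$ finite, whence $G_a^\circ = G_x^\circ$ and $a \sim x$. Thus the exceptional set $\{a : \rk(G_x^\circ \cap G_a)\ge 1\}$ is finite. For $a$ outside it the $G_x^\circ$-orbit of $a$ has rank $\rk G_x^\circ \le r$, forcing $\rk G_x = r$ and producing a generic orbit; the $G$-orbit of $(x,a)$ then has rank $2r = \rk X^2$, which (by degree $1$) is the required generic orbit on $X^2$.

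Next I would pass to a definably primitive quotient $(\bar X, \bar G)$ via Fact~\ref{fact.primQuotient} and control ranks. Such a quotient corresponds to a maximal proper definable $M$ with $G_x \le M < G$; by virtual definable primitivity (and $G$ connected, so $M$ cannot have finite index) one has $|M:G_x|$ finite, so $M^\circ = G_x^\circ$ and the blocks are finite. The kernel $K$ of the action on blocks is finite as well, since $K^\circ$ is connected and fixes each finite block setwise, hence pointwise. Consequently $\rk \bar X = r$, $\rk \bar G = 2r$, and $\bar G_{\bar x} = M/K$ is abelian-by-finite with $(\bar G_{\bar x})^\circ$ the abelian image of $G_x^\circ$. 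Generic $2$-transitivity descends through the finite-to-one map $X^2 \to \bar X^2$, so $(\bar X, \bar G)$ is generically $2$-transitive. Applying Lemma~\ref{lem.GenTwoTransAbelian} with $n = 2$ shows $\bar G_{\bar x}$ is connected, hence abelian, and then Lemma~\ref{lem.primGenTwoTransAbelian} yields $(\bar X, \bar G) \cong (K', \aff_1(K'))$; in particular $r = \rk \bar X = 1$.

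Finally, with $r = 1$ the set $X$ has rank and degree $1$, so Fact~\ref{fact.Hru} applies to the transitive group $(X,G)$: either $(X,G) \cong (K, \aff_1(K))$ or $(X,G) \cong (\proj^1(K), \psl_2(K))$. Since $\rk G = 2r = 2$ rules out the second (rank $3$) possibility, the first holds. I expect the main obstacle to be the generic-freeness argument of the second paragraph together with the bookkeeping in the quotient---in particular the fact that abelianness of point stabilizers is not visibly preserved on passing to the quotient, and must be recovered from connectedness via Lemma~\ref{lem.GenTwoTransAbelian}.
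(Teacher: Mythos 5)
Your proof is correct, and its overall skeleton---establish generic $2$-transitivity, pass to the definably primitive quotient of Fact~\ref{fact.primQuotient}, and feed that quotient into Lemmas~\ref{lem.GenTwoTransAbelian} and~\ref{lem.primGenTwoTransAbelian}---matches the paper's, but you diverge in two places worth recording. First, for generic $2$-transitivity the paper works with a single witness: it picks one $y$ not fixed by $G_x^\circ$ and shows $G_x\cap G_y=1$ by essentially the same centralizer-plus-virtual-primitivity argument you use; you instead bound the whole exceptional set $\{a:\rk(G_x^\circ\cap G_a)\ge 1\}$ by a finite equivalence class, which works but costs you the (standard, yet not free) fact that the connected components of the uniformly definable family $\{G_a\}_{a\in X}$ are uniformly definable, so that your relation $G_a^\circ=G_b^\circ$ is indeed definable; the paper's one-witness version sidesteps this. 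Second, and more substantively, the endgames differ: the paper shows the quotient is trivial---its kernel is finite, hence central, hence trivial by the centerlessness in Lemma~\ref{lem.GenTwoTransAbelian}, and connectedness of stabilizers forces the classes to be singletons---so that $(X,G)$ itself is definably primitive and Lemma~\ref{lem.primGenTwoTransAbelian} applies to it directly; you instead apply Lemma~\ref{lem.primGenTwoTransAbelian} to the quotient, extract only $\rk X=1$ and $\rk G=2$, and re-invoke Fact~\ref{fact.Hru} on the original $(X,G)$, ruling out $\psl_2(K)$ by rank. Your transfer is a legitimate and clean alternative: it never needs the classes to be singletons, and your kernel-finiteness argument (a connected group stabilizing finite blocks fixes them pointwise) needs only faithfulness rather than centerlessness. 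One small slip, easily repaired: in the case $C(S)=G$ you write $S\le Z(G)=1$, but $Z(G)=1$ is not known at that stage; what you actually have is that $S$ is central and fixes $x$, and a central element fixing a point is trivial by transitivity and faithfulness---which is all your argument requires.
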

\begin{proof}
Let $(X,G)$ satisfy the hypotheses of the proposition, and note that this implies that $X$ is connected. We first claim that the action is generically $2$-transitive. Fix $x\in X$. Since the point stabilizers are infinite, there is some $y\in X$ that is not fixed by $G_x^\circ$. In light of the fact that $X$ is connected, the claim will follow from the observation that $G_x \cap G_y = 1$.  Indeed, if $h \in G_x \cap G_y$, then $\langle G_x, G_y\rangle\le C(h)$, and as $G_x^\circ \neq G_y^\circ$, this shows that $C(h)$ has infinite index over $G_x$. By virtually definably primitivity and the connectedness of $G$, we find that $C(h) = G$, so as $h$ fixes a point, $h = 1$. 

Now by Fact~\ref{fact.primQuotient}, $(X,G)$ has a definably primitive quotient $(\overline{X},G)$ with finite classes. Let $M$ be the kernel of the action of $G$ on $\overline{X}$. Since the classes in the quotient are finite, $M^\circ$ fixes all of $X$, so $M$ is finite. Recalling that $G$ is connected, we find that $M$ is central, so the previous lemma shows us that in fact $M=1$. Further, since $(X,G)$ is generically $2$-transitive, $(\overline{X},G)$ is as well, and the connectedness result from the previous lemma implies that the point stabilizers from  $(\overline{X},G)$ and $(X,G)$ coincide, i.e. the classes in the quotient have cardinality $1$. Thus, $(X,G)$ is definably primitive and generically $2$-transitive with abelian point stabilizers. Lemma~\ref{lem.primGenTwoTransAbelian}  applies.
\end{proof} 

\begin{corollary}\label{cor.SimpleMaxAbelian}
Let $G$ be a simple  group of finite Morley rank and $A<G$ a maximal definable connected subgroup. If $A$ is abelian, then $2\cdot\rk A<\rk G$.
\end{corollary}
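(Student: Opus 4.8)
The plan is to argue by contradiction: assume $2\cdot\rk A \ge \rk G$ and produce a configuration forbidden by the simplicity of $G$. The idea is to realize $A$ as the point stabilizer of a virtually definably primitive action meeting the hypotheses of Proposition~\ref{prop.VprimRankAbelian}, which will identify $G$ with the solvable group $\aff_1(K)$.

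First I would set up the action. Since $G$ is simple and infinite it is connected, and $A$ is a proper definable subgroup, so the core $\bigcap_{g\in G} gAg^{-1}$ is a proper normal subgroup of $G$, hence trivial. Therefore the left translation action of $G$ on the coset space $X := G/A$ is faithful, and $(X,G)$ is a transitive permutation group of finite Morley rank with connected abelian point stabilizer $A$. I would then deduce virtual definable primitivity directly from the maximality of $A$: given a definable $H$ with $A \le H \le G$, the connected component $H^\circ$ is a definable connected subgroup containing $A$ (as $A$ is connected, $A = A^\circ \le H^\circ$), so maximality forces $H^\circ = A$ or $H^\circ = G$; in the first case $|H:A|$ is finite and in the second $H = G$, which is exactly the subgroup criterion for virtual definable primitivity.

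Next I would check the numerical hypothesis. One has $\rk X = \rk G - \rk A$, and the standing assumption $2\cdot\rk A \ge \rk G$ rearranges to $\rk A \ge \rk X$, so the point stabilizers have rank at least that of $X$. Moreover $\rk A < \rk G$ --- otherwise $A$ would have finite index and coincide with $G$ by connectedness --- so $\rk X \ge 1$ and $X$ is infinite. With $A$ abelian and $G$ connected, Proposition~\ref{prop.VprimRankAbelian} now applies and yields $(X,G)\cong (K,\aff_1(K))$ for some algebraically closed field $K$. In particular $G\cong \aff_1(K) = K^+\rtimes K^\times$ is solvable, contradicting the simplicity of $G$.

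The argument is short because Proposition~\ref{prop.VprimRankAbelian} does the substantive work; the points requiring care are the two ``glue'' steps --- the triviality of the core, which gives faithfulness, and the passage from $A \le H$ to $A \le H^\circ$ that converts maximality of $A$ into virtual definable primitivity --- together with checking that the degenerate possibilities ($\rk A = \rk G$, or $X$ finite) are excluded. I expect this finite-index book-keeping in the primitivity verification to be the most delicate part, although it is entirely elementary.
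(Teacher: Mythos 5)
Your proof is correct and is precisely the argument the paper intends: the corollary is stated without proof because it follows by applying Proposition~\ref{prop.VprimRankAbelian} to the action of $G$ on $G/A$, exactly as you do. Your ``glue'' steps --- triviality of the core, the deduction of virtual definable primitivity from maximality of the connected subgroup $A$, and the exclusion of the degenerate cases --- are all sound.
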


\subsection{Groups acting on sets of rank $2$}
Here, in the context of groups acting on sets of rank $2$, we give a couple of approximations to Proposition~\ref{prop.VprimRankAbelian} for actions with nonabelian point stabilizers. The relevant result for the sequel is Corollary~\ref{cor.SimpleMaxCenterRankFour}.

\begin{lemma}\label{prop.VprimRankNilpotentTwoTrans}
Let $(X,G)$ be a transitive and virtually definably primitive permutation group of finite Morley rank whose point stabilizers are connected and nilpotent. Assume $\rk X = 2$. If $G$ is connected and the point stabilizers have rank at least $2$, then any definably primitive quotient of $(X,G)$ is $2$-transitive.
\end{lemma}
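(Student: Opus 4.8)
The plan is to pass to a definably primitive quotient and then show that a point stabilizer acts transitively on the complement of the fixed point, the key leverage being Hrushovski's classification (Fact~\ref{fact.Hru}) applied to any low-rank orbit.

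First I would fix a definably primitive quotient $(\overline X, \overline G)$ with $\overline G = G/K$, which exists by Fact~\ref{fact.primQuotient}. Such a quotient corresponds to a proper definable subgroup $H$ with $G\ge H\ge G_x$ that is maximal among definable subgroups, so virtual definable primitivity forces $|H:G_x|$ to be finite (the alternative, $|G:H|$ finite, would collapse the quotient since $G$ is connected). Hence $H^\circ = G_x$ and $\rk\overline X = \rk G - \rk H = \rk X = 2$. The kernel $K$ is finite: a class is finite, so the connected group $K^\circ$ fixes each class pointwise, hence fixes all of $X$, whence $K^\circ = 1$ by faithfulness. Thus $\overline G$ is connected, the stabilizer $P := \overline G_{\bar x} = H/K$ has $\rk P = \rk G_x \ge 2$, and $P^\circ = G_x K/K$ is connected and nilpotent. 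By Lemma~\ref{lem.DefPrim}, $P$ has $\{\bar x\}$ as its unique orbit of rank $0$, so every $P^\circ$-orbit other than $\{\bar x\}$ has rank $1$ or $2$.

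The crux is to exclude rank-$1$ orbits. Suppose $\orbit = P^\circ \bar z$ has rank $1$ for some $\bar z\neq\bar x$. Being a $P^\circ$-orbit with $P^\circ$ connected, $\orbit$ is connected, hence of degree $1$; moreover $P^\circ \le N_{\overline G}(\orbit)$ acts transitively on $\orbit$, and $N_{\overline G}(\orbit)$ acts faithfully on $\orbit$ by \cite[Lemma~1.6]{BoCh08}. Thus $(\orbit, N_{\overline G}(\orbit))$ is a transitive permutation group on a rank-$1$, degree-$1$ set, so Fact~\ref{fact.Hru} applies: $N_{\overline G}(\orbit)$ has rank at most $3$ and, if its rank exceeds $1$, is isomorphic to $\aff_1(K)$ or $\psl_2(K)$ for some algebraically closed field $K$. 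In every such case each connected nilpotent subgroup has rank at most $1$, since the only connected subgroups of larger rank are the nonnilpotent rank-$2$ groups $K^+\rtimes K^\times$. This contradicts the fact that $P^\circ$ embeds faithfully as a connected nilpotent subgroup of rank at least $2$. Therefore $P^\circ$ has no orbit of rank $1$.

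Putting this together, every $P^\circ$-orbit other than $\{\bar x\}$ has rank exactly $2$; in particular a generic orbit of rank $2$ exists, so $(\overline X, \overline G)$ is generically $2$-transitive and $\overline X$ has degree $1$ by \cite[Lemma~1.8(3)]{BoCh08}. A degree-$1$, rank-$2$ set cannot contain two disjoint definable subsets of rank $2$, so $\overline X\setminus\{\bar x\}$ is a single $P^\circ$-orbit. Hence $P$ is transitive on $\overline X\setminus\{\bar x\}$, i.e. $(\overline X, \overline G)$ is $2$-transitive. I expect the main obstacle to be isolating and justifying the observation of the third paragraph — that a connected nilpotent group of rank at least $2$ admits no faithful transitive action on a connected rank-$1$ set — since the remaining steps are bookkeeping with ranks and degrees together with the reduction to the primitive quotient.
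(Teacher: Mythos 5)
The crux of your argument fails at the appeal to \cite[Lemma~1.6]{BoCh08}. That lemma gives faithfulness of a setwise stabilizer acting on a \emph{generic} definable subset --- this is exactly how the paper itself uses it (in the proof of Lemma~\ref{lem.GenTwoTransAbelian}, for the generic orbit of $H$ on $X$) --- whereas your $\orbit$ has rank $1$ inside the rank-$2$ set $\overline{X}$, so it is nowhere near generic. Moreover, no repair of the citation can save the step, because the asserted principle is simply false without genericity: in the definably primitive rank-$2$ permutation group $(\proj^2(K),\pgl_3(K))$, the setwise stabilizer of a projective line is a parabolic subgroup whose pointwise stabilizer of that line (the group of central collineations with that axis) is infinite. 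So you may not conclude that $P^\circ$, let alone $N_{\overline{G}}(\orbit)$, acts faithfully on $\orbit$. The honest output of Fact~\ref{fact.Hru}, applied to the faithful transitive group $P^\circ/A$ acting on $\orbit$ (where $A$ is the kernel), is only that $\rk(P^\circ/A)\le 1$, i.e.\ that $A$ is \emph{infinite}. Eliminating this infinite kernel is precisely the content of the lemma, and it is the step your proposal skips; your closing remark locates the difficulty in the observation that a connected nilpotent group of rank at least $2$ has no faithful transitive action on a rank-$1$ degree-$1$ set, but that observation is immediate from Fact~\ref{fact.Hru} --- the real difficulty is the faithfulness itself.

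For comparison, the paper confronts the kernel head-on, and this is where the remaining hypotheses get used. Working in $(X,G)$ itself (not in the quotient, where stabilizers are only connected-by-finite), it lets $A$ be the kernel of $G_x$ acting on a putative rank-$1$ orbit $\orbit$; Fact~\ref{fact.Hru} and nilpotence give $\rk A=\rk G_x-1$, so $A$ has corank $1$ in $G_y$ for every $y\in\orbit$, whence the Normalizer Condition together with connectedness of $G_y$ forces $A\trianglelefteq G_y$; then virtual definable primitivity applied to $N(A)\ge\langle G_x,G_y\rangle$ yields $A\trianglelefteq G$, contradicting faithfulness since $A$ is infinite and fixes a point of a transitive action. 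Your surrounding bookkeeping --- the reduction to a definably primitive quotient with finite kernel, and the degree-$1$ argument showing that the absence of rank-$0$ and rank-$1$ orbits gives $2$-transitivity --- agrees with the paper and is fine, but as it stands the proposal has a genuine gap at its central step.
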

\begin{proof}
Fix $x\in X$. First, we show that $G_x$ has no rank $1$ orbits on $X$. From this, the lemma follows quickly. Towards a contradiction, assume that $\orbit$ is a rank $1$ orbit of $G_x$, and let $A$ be the kernel of the action of $G_x$ on $\orbit$. As $G_x$ is nilpotent, Fact~\ref{fact.Hru} implies that $\rk G_x - \rk A = 1$. Thus, for every $y\in \orbit$, $A$ has corank $1$ in $G_y$, so the Normalizer Condition, together with the assumption that $G_y$ is connected, shows that $A$ is normal in $G_y$. As $G_x \neq G_y$, virtual definably primitivity implies that $A$ is normal in $G$. This is a contradiction, so $G_x$ has no rank $1$ orbits on $X$. Thus, the only obstacle to $(X,G)$ being $2$-transitive is the possibility that $G_x$ fixes more that just $x$, but we can remove this obstacle by passing to a definably primitive quotient.

Let $(\modu{X},G)$ be a definably primitive (but possibly not faithful) quotient of $(X,G)$. Let $\modu{x}$ be the image of $x$ in $\modu{X}$. By Lemma~\ref{lem.DefPrim}, $G_{\modu{x}}$, has no orbit of rank $0$ other than $\{\modu{x}\}$. Now, if $G_{\modu{x}}$ has an orbit of rank $1$ on $\modu{X}$, then $G^\circ_{\modu{x}} = G_x$ would as well. As the classes in the quotient are finite, this would imply that $G_x$ acts nontrivially on a rank $1$ subset of $X$, and this in turn would imply that $G_x$ has a rank $1$ orbit on $X$. This contradicts our work above, so $G_{\modu{x}}$ must have no orbits of rank $1$. Now, the connectedness of $G$ implies that $X$ has degree $1$, so $G_{\modu{x}}$ acts transitively on $\modu{X} - \{\modu{x}\}$.
\end{proof}

\begin{lemma}\label{lem.vprimTwoTrans}
Let $(X,G)$ be a transitive and virtually definably primitive permutation group of finite Morley rank whose point stabilizers are connected and possess a nontrivial center. Assume $\rk X = 2$. If $G$ is connected and the point stabilizers have rank $2$, then any definably primitive quotient of $(X,G)$ is $2$-transitive.
\end{lemma}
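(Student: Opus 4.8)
The plan is to follow the template of Lemma~\ref{prop.VprimRankNilpotentTwoTrans}: first show that the point stabilizer $G_x$ has no orbit of rank $1$ on $X$, and then transfer this to an arbitrary definably primitive quotient $(\modu{X},G)$ exactly as in the second paragraph of that proof. Indeed, a rank-$1$ orbit of $G_{\modu{x}}$ upstairs would descend, through the finite classes, to a rank-$1$ orbit of $G_x$ on $X$; combined with Lemma~\ref{lem.DefPrim} (which pins the only rank-$0$ orbit as $\{\modu{x}\}$) and the fact that $X$ has degree $1$ (as $G$ and $G_x$ are connected) and hence so does its quotient, the absence of rank-$1$ orbits forces $G_{\modu{x}}$ to be transitive on $\modu{X} - \{\modu{x}\}$. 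Since the case that $G_x$ is nilpotent is already covered by Lemma~\ref{prop.VprimRankNilpotentTwoTrans}, I may assume throughout that $G_x$ is nonnilpotent, so by Fact~\ref{fact.rankTwoGroups} it has the form $G_x = (G_x)' \rtimes T$ with $(G_x)' \cong K^+$ unipotent of rank $1$, $T$ a decent torus, and $Z(G_x)$ finite; the hypothesis guarantees $Z(G_x) \neq 1$.

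Suppose toward a contradiction that $\orbit$ is a rank-$1$ orbit of $G_x$, and let $A \leq G_x$ be the (normal) kernel of the action of $G_x$ on $\orbit$. As $\orbit$ is the image of the connected group $G_x$, it has degree $1$, so Fact~\ref{fact.Hru} applies to the faithful transitive action of $G_x/A$ on $\orbit$; since $G_x$ is solvable the $\psl_2$ option is excluded, leaving two cases: either $\rk A = 1$, or $A$ is finite and $G_x/A \cong \aff_1(K)$ for some algebraically closed field $K$.

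In the first case I would argue that $A^\circ$ is the derived subgroup $(G_x)'$: it is a connected normal rank-$1$ subgroup of $G_x$, and any such subgroup other than $(G_x)'$ would map onto the torus quotient $G_x/(G_x)'$, hence contain a nontrivial decent torus normalized by $G_x$, forcing $G_x$ abelian by Lemma~\ref{lem.RankTwoAbelian}(1). As $(G_x)'$ is unipotent, the same reasoning identifies it as the unique connected unipotent rank-$1$ subgroup of each $G_y$ with $y \in \orbit$ (each of which contains $A^\circ$, since $A$ fixes $\orbit$ pointwise); thus $A^\circ = (G_y)' \trianglelefteq G_y$. Then $N_G(A^\circ)$ contains $\langle G_x, G_y\rangle$, which properly contains $G_x$ because $G_x \neq G_y$, so virtual definable primitivity and the connectedness of $G$ force $A^\circ \trianglelefteq G$; faithfulness of $(X,G)$ then gives $A^\circ = 1$, contradicting $\rk A^\circ = 1$.

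The second case is the crux, and it is exactly where the nontrivial center is indispensable. Since $\aff_1(K)$ is centerless and has no nontrivial finite normal subgroup, the image of $Z(G_x)$ in $G_x/A$ is trivial and $A/Z(G_x)$ is trivial, whence $A = Z(G_x)$; in particular $A \neq 1$. For $y \in \orbit$ we have $A = Z(G_x) \leq G_y$, and since every element of $A$ has order dividing $|A| = |Z(G_y)|$, Lemma~\ref{lem.RankTwoNonnilCenter} places $A$ inside $Z(G_y)$, forcing $A = Z(G_y)$ by cardinality. Choosing any $1 \neq a \in A$ then gives $a \in Z(G_x) \cap Z(G_y)$, so $C_G(a) \supseteq \langle G_x, G_y\rangle \supsetneq G_x$; virtual definable primitivity together with the connectedness of $G$ yields $C_G(a) = G$, i.e.\ $a \in Z(G)$, and faithfulness forces $a = 1$ — a contradiction. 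I expect this second case to be the main obstacle: the possibility of an $\aff_1$-orbit is precisely what the nilpotent hypothesis ruled out for free in Lemma~\ref{prop.VprimRankNilpotentTwoTrans}, and recovering the contradiction here hinges on the finite center being shared across nearby stabilizers, which is supplied by Lemma~\ref{lem.RankTwoNonnilCenter}.
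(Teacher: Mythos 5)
Your proof is correct and follows essentially the same route as the paper: the same case split on the kernel $A$ (rank $1$ versus finite), the same identification of $A^\circ$ with the shared unipotent radical normalized by both $G_x$ and $G_y$, and the same use of Lemma~\ref{lem.RankTwoNonnilCenter} to propagate the finite center from $G_x$ to $G_y$, followed by the virtual-definable-primitivity contradiction and the transfer to the definably primitive quotient. The only cosmetic slip is attributing $A \le Z(G_x)$ to $\aff_1(K)$ having no nontrivial finite normal subgroups --- the correct (and standard) reason, used silently in the paper, is that a finite normal subgroup of a connected group is central.
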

\begin{proof}
By the previous lemma, we may assume that $G_x$ is not nilpotent. As before, we first show that $G_x$ has no orbits of rank $1$. Indeed, assume that $\orbit$ is a rank $1$ orbit, and let $A$ be the kernel of the action of $G_x$ on $\orbit$.  Since $Z(G_x) \neq 1$, Fact~\ref{fact.Hru} implies that $A$ is nontrivial. Fix $y\in \orbit$.

Now, if $A$ has rank $1$, then Lemma~\ref{lem.RankTwoAbelian} implies that $A^\circ$ is unipotent. Of course $A^\circ \le G_y$, so  Lemma~\ref{lem.RankTwoAbelian} also tells us that $A^\circ$ is normal in $G_y$. Thus, $N(A^\circ) \ge \langle G_x,G_y \rangle$, so $A^\circ$ is normal in $G$. This is a contradiction. 

Thus, $A$ is finite. Hence, $A$ is central in $G_x$, and Fact~\ref{fact.Hru} implies that $Z(G_x) = A$.  By Lemma~\ref{lem.RankTwoNonnilCenter}, $A$ is the unique subgroup of $G_x$ of cardinality $n := |A|$. But then, $A$ is also the unique subgroup of $G_y$ of cardinality $n$, so $A$ is the center of $G_y$ as well. We conclude that $A$ is central in $G$, which is again a contradiction. Thus, $G_x$ has no rank $1$ orbits, and the rest follows as in the proof of the preceding lemma.
\end{proof}

\begin{corollary}\label{cor.SimpleMaxCenterRankFour}
Let $G$ be a simple group of rank $4$ and $B < G$ a definable connected subgroup of rank $2$. If $B$ has a nontrivial center, then the action of $G$ on the coset space $N(B)\backslash G$ by right multiplication is $2$-transitive.
\end{corollary}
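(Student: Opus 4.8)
The plan is to obtain this from Lemma~\ref{lem.vprimTwoTrans}. The one genuine obstacle is that the lemma demands \emph{connected} point stabilizers, whereas the stabilizers of the action on $N(B)\backslash G$ are conjugates of $N(B)$, which need not be connected (this is presumably why the statement is phrased via $N(B)$ rather than $B$). I would get around this by applying the lemma to the coset action of $B$, whose stabilizers \emph{are} connected, and then recognizing $(N(B)\backslash G, G)$ as a definably primitive quotient of $(B\backslash G, G)$.

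First I would record the subgroup lattice above $B$. Since $G$ is simple and $B$ is proper and nontrivial, $B$ is not normal, so $N(B)^\circ$ is a proper definable connected subgroup; by Fact~\ref{fact.Hru} every proper definable connected subgroup of $G$ has rank at most $2$ (a rank-$3$ one would make $G$ act transitively, and with degree $1$, on a set of rank $1$, forcing $\rk G \le 3$). As $B \le N(B)^\circ$ and $\rk B = 2$, connectedness forces $N(B)^\circ = B$. The same count shows that $N(B)$ is a maximal definable subgroup: any definable $M$ with $B \le M < G$ must have $\rk M = 2$ (rank $3$ being excluded, rank $4$ forcing $M = G$), whence $M^\circ = B$ and so $M \le N(B)$.

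Next I would examine $X := B\backslash G$ under right multiplication. Its point stabilizers are conjugates of $B$, hence connected, of rank $2$, and with nontrivial center (by hypothesis); moreover $G$ is connected and $\rk X = \rk G - \rk B = 2$. The action is virtually definably primitive, since a definable $H$ with $B \le H \le G$ has rank $2$ or $4$ (never $3$), giving $|H:B|$ finite in the first case and $H = G$ in the second. All hypotheses of Lemma~\ref{lem.vprimTwoTrans} are thus in place, and it yields that \emph{every} definably primitive quotient of $(X,G)$ is $2$-transitive.

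Finally, the natural $G$-equivariant surjection $B\backslash G \to N(B)\backslash G$ has finite fibers, of size $|N(B):B|$, so $(N(B)\backslash G, G)$ is a quotient of $(X,G)$; it is definably primitive because $N(B)$ is maximal. By the previous step it is $2$-transitive, which is exactly the assertion. I expect the connectedness maneuver of the first paragraph to be the only real content, everything else being routine rank bookkeeping.
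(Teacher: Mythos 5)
Your proof is correct and takes essentially the same route as the paper: the paper's own argument also applies Lemma~\ref{lem.vprimTwoTrans} to the action of $G$ on $B\backslash G$ (where stabilizers are connected) after using Fact~\ref{fact.Hru} to exclude rank-$3$ subgroups, and then observes that $N(B)$ is a maximal definable subgroup, so that $(N(B)\backslash G, G)$ is a definably primitive quotient to which the lemma's conclusion applies. Your write-up merely makes explicit the rank bookkeeping (virtual definable primitivity, $N(B)^\circ = B$, finite fibers) that the paper leaves implicit.
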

\begin{proof}
Since $G$ is simple, Fact~\ref{fact.Hru} implies that $G$ has no definable subgroups of rank $3$. Thus, $B$ is a maximal definable connected subgroup of $G$, so the previous lemma applies to $G$ acting on the coset space $B\backslash G$. It remains to observe that $N(B)$ is a maximal definable subgroup of $G$, so the action of $G$ on $N(B)\backslash G$ is definably primitive. 
\end{proof}

\section{Simple groups of rank $4$}\label{sec.simpleRankFour}
We now take up the proof of Theorem~\ref{thm.A}. For the remainder of this section, $G$ denotes a simple group of rank $4$ for which there is some definable subgroup of rank $2$.

\begin{setupSRF}
Let  $G$ be a simple group of rank $4$, and assume that $G$ has a definable subgroup of rank $2$.
\end{setupSRF}

\subsection{General remarks}
The simplicity of $G$ implies that $G$ has no definable subgroups of rank $3$, so Corollary~\ref{cor.SimpleMaxAbelian} yields the following important fact.

\begin{remark}\label{rem.SimpleMaxAbelianRankFour}
A definable abelian subgroup of $G$ has rank at most $1$.
\end{remark}

We now comment on tori in $G$. Certainly more could be said, specifically in regards to the Weyl group, but the following will suffice for our purposes.

\begin{lemma}\label{lem.maxTori}
If $T$ is a maximal decent torus of $G$, then $T$ is self-centralizing and contained in some nonnilpotent Borel subgroup. Further, 
$T\cong K^\times$ for some algebraically closed field $K$. 
\end{lemma}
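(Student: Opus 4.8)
The plan is to treat the three assertions in turn, using the structure theory of connected rank-$2$ groups from Section~\ref{sec.prelim}. Throughout I take $T$ to be nontrivial, since a trivial maximal decent torus makes the statement vacuous and the conclusions presuppose $T\neq 1$.

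\emph{Rank and self-centralization.} Since $T$ is abelian, Remark~\ref{rem.SimpleMaxAbelianRankFour} gives $\rk T\le 1$; as $T$ is a nontrivial decent torus, $\rk T=1$. By Fact~\ref{fact.centTori} the centralizer $C(T)$ is connected, and it contains $T$. Suppose $C(T)\neq T$. As $G$ is simple it has no definable subgroup of rank $3$ (Fact~\ref{fact.Hru}), so $C(T)$ is connected of rank exactly $2$ with $T\le Z(C(T))$ infinite. Fact~\ref{fact.rankTwoGroups} then leaves three possibilities, each of which I can exclude: if $C(T)$ were abelian it would contradict Remark~\ref{rem.SimpleMaxAbelianRankFour}; if it were nilpotent and nonabelian it would have exponent $p$ or $p^2$ and so could not contain the nontrivial divisible group $T$; and if it were nonnilpotent then $Z(C(T))$ would be finite by Lemma~\ref{lem.RankTwoNonnilCenter}, again contradicting $T\le Z(C(T))$. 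Hence $C(T)=T$, so $T$ is self-centralizing.

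\emph{A nonnilpotent Borel and the field.} The first key observation is that \emph{any} Borel subgroup (maximal connected definable solvable subgroup) of rank $2$ containing $T$ is automatically nonnilpotent: a nilpotent connected rank-$2$ group is either abelian, which is impossible by Remark~\ref{rem.SimpleMaxAbelianRankFour} when the rank is $2$, or of exponent $p$ or $p^2$ by Fact~\ref{fact.rankTwoGroups}, and the latter cannot contain the nontrivial divisible torus $T$. Granting that $T$ lies in such a rank-$2$ Borel $B$, Fact~\ref{fact.rankTwoGroups} writes $B=B'\rtimes S$ with $S$ a decent torus, and the argument in the proof of Lemma~\ref{lem.RankTwoNonnilCenter} (via Corollary~\ref{cor.DivisibleFiniteQuotient}) gives $S\cong K^\times$ for an algebraically closed field $K$. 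Since $T$ is maximal decent in $G$, it is maximal decent in $B$, so $T$ and $S$ are conjugate in the connected solvable group $B$; thus $T\cong S\cong K^\times$, and the $B$-conjugate of $B$ carrying $S$ to $T$ is a nonnilpotent Borel containing $T$.

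\emph{The main obstacle} is therefore to show that $T$ genuinely lies in a rank-$2$ Borel, i.e. that $T$ is not itself a rank-$1$ Borel. Here the Setup hypothesis that $G$ has a definable subgroup of rank $2$ is essential: its connected component is solvable by Fact~\ref{fact.rankTwoGroups} and hence sits inside a Borel $B_0$ of rank $2$, so rank-$2$ Borels exist. If some rank-$2$ Borel is nonnilpotent, it contains a maximal decent torus, which by conjugacy of maximal decent tori is $G$-conjugate to $T$; transporting back places $T$ inside a rank-$2$ Borel (and by maximality of $T$ shows $T$ is not a Borel), finishing the proof. The delicate case is when every rank-$2$ Borel is unipotent of bounded exponent; I expect to eliminate this by producing a nontrivial semisimple element whose centralizer has rank $2$ — for example through the Brauer--Fowler estimate of Fact~\ref{fact.BrauerFowler} combined with the torality of Fact~\ref{fact.torality} — since the connected component of such a centralizer is a connected solvable rank-$2$ group containing a nontrivial decent torus, hence nonnilpotent. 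Locating this nonnilpotent Borel is the crux of the lemma; the rank and self-centralization assertions are comparatively routine consequences of the rank-$2$ structure theory.
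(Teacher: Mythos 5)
Your handling of the two ``routine'' assertions is essentially sound and runs parallel to the paper's own argument: the paper gets self-centralization in one stroke from Lemma~\ref{lem.RankTwoAbelian}(1) together with Remark~\ref{rem.SimpleMaxAbelianRankFour} (where you do a three-case analysis via Fact~\ref{fact.rankTwoGroups} and Lemma~\ref{lem.RankTwoNonnilCenter}, which also works), and it obtains $T\cong K^\times$ exactly as you do, from Fact~\ref{fact.rankTwoGroups} and Corollary~\ref{cor.DivisibleFiniteQuotient}. But the step you defer --- locating a nonnilpotent rank-$2$ subgroup --- is not a loose end; it is the main content of the lemma, and your plan for it does not work. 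Note first that the case $T=1$ is not vacuous: if $G$ had no nontrivial decent torus, the unique maximal decent torus would be trivial, and the conclusion ``$T$ is self-centralizing'' would then be \emph{false} (as $C(1)=G$), not empty. So proving the lemma forces you to produce a nontrivial torus, which is exactly the configuration you set aside as ``delicate'': every connected rank-$2$ subgroup nilpotent of bounded exponent, i.e.\ $G$ a bad group.

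Second, the tool you propose for that configuration fails at the starting line. Fact~\ref{fact.BrauerFowler} is a statement \emph{about} involutions: it needs an involution $i\in G$ before it produces anything, and in the problematic configuration $G$ is a simple bad group, which a priori (and in fact provably, by \cite[Theorem~13.3]{BoNe94}) has no involutions at all. Likewise Fact~\ref{fact.torality} only locates $p$-elements inside tori once such elements are known to exist; it cannot manufacture torsion or tori from nothing. The paper's resolution is different, and it is the one idea your outline is missing: if every connected rank-$2$ subgroup of $G$ were nilpotent, then the rank-$2$ subgroup guaranteed by the Setup would be connected nilpotent, hence would have nontrivial center, so Corollary~\ref{cor.SimpleMaxCenterRankFour} would make $G$ act $2$-transitively on a coset space; a $2$-transitive group of finite Morley rank contains involutions, whereas a simple bad group has none --- a contradiction. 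This produces the nonnilpotent connected rank-$2$ subgroup $B$, whose torus $S$ is then shown to be maximal and self-centralizing, and conjugacy of maximal decent tori transports $T$ into (a conjugate of) $B$, as in the final step you describe. Without this argument, or a genuine substitute for it, your proof is incomplete precisely at its crux.
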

\begin{proof}
The main point is that $G$ must contain some nonnilpotent subgroup of rank $2$. Indeed, if every connected rank $2$ subgroup of $G$ is nilpotent, then $G$ is a bad group, but Corollary~\ref{cor.SimpleMaxCenterRankFour} implies that $G$ contains involutions. Since simple bad groups do not have involutions, see \cite[Theorem~13.3]{BoNe94}, $G$ must contain some nonnilpotent connected subgroup $B$ of rank $2$. Now, $B$ is solvable and nonnilpotent, so $B$ contains a nontrivial decent torus $S$ of rank $1$. By Lemma~\ref{lem.RankTwoAbelian} and Remark~\ref{rem.SimpleMaxAbelianRankFour}, $S= C^\circ(S)$. Thus $S$ is a maximal decent torus, and Fact~\ref{fact.centTori} tells us that  $S= C(S)$. As maximal decent tori are conjugate in any group of finite Morley rank (see \cite[IV,~Proposition~1.15]{ABC08}), we may take $T$ to be $S$, and the first point is complete.

Now, by Fact~\ref{fact.rankTwoGroups}, we know that $B/Z(B) \cong K^+ \rtimes K^\times$  for some algebraically closed field $K$. Further, $Z(B) \le T$, so as $T/Z(B) \cong K^\times$, we see that $T \cong K^\times$ by Corollary~\ref{cor.DivisibleFiniteQuotient}. 
\end{proof}

\subsection{Borel subgroups}
Recall that a \textdef{Borel subgroup} of a group of finite Morley rank is defined to be a maximal definable connected solvable subgroup. As connected groups of rank $2$ are solvable and $G$ has no subgroups of rank $3$, \emph{a Borel subgroup of $G$ is the same as a maximal proper definable connected subgroup.}

\begin{proposition}\label{prop.SimpleRankTwoSubRankFour}
Every Borel subgroup of $G$ is self-normalizing and nonnilpotent of rank $2$. Further, $G$ has even or odd type.
\end{proposition}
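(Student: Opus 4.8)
The plan is to first pin down the type of $G$ and then rule out nilpotent Borel subgroups, the latter being the crux of the proposition.

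I would begin with the reductions and the non-degenerate case. Since $G$ has no proper definable connected subgroup of rank $3$, a Borel subgroup is exactly a maximal proper definable connected subgroup, so every Borel has rank at most $2$; in particular every connected rank-$2$ subgroup is a Borel. Because a connected solvable group of rank at most $1$ is abelian, hence nilpotent, a \emph{nonnilpotent} Borel automatically has rank $2$. Thus the clause ``every Borel is nonnilpotent of rank $2$'' is equivalent to the single statement that $G$ has \emph{no} nilpotent Borel, and I would organize the argument around that. To start, Fact~\ref{fact.rankTwoGroups} and Lemma~\ref{lem.maxTori} give a nonnilpotent Borel $B_0 = B_0' \rtimes T$ with $B_0' \cong K^+$ and $T \cong K^\times$ for some algebraically closed field $K$. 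If $\charac K \neq 2$ then $T \cong K^\times$ contains a nontrivial $2$-torus, whereas if $\charac K = 2$ then $B_0' \cong K^+$ is a nontrivial $2$-unipotent subgroup; either way $G$ carries a nontrivial connected $2$-subgroup, so $G$ is not of degenerate type.

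Next I would rule out mixed type. Let $P$ be a Sylow $2$-subgroup of $G$ and write its connected component as the central product $P^\circ = U * S$ of a $2$-unipotent group $U$ and a $2$-torus $S$. If $G$ were mixed, both factors would be nontrivial, so $\rk P^\circ = \rk U + \rk S \ge 2$; as $P^\circ$ is connected and nilpotent, it would then be a Borel of rank $2$. But $S$ is central in $P^\circ$, so $P^\circ$ normalizes the nontrivial decent torus $S$, and Lemma~\ref{lem.RankTwoAbelian}(1) would force $P^\circ$ to be abelian, contradicting Remark~\ref{rem.SimpleMaxAbelianRankFour}, which caps abelian subgroups at rank $1$. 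Hence $G$ is not mixed. Combined with the $2$-torus or $2$-unipotent subgroup exhibited above, $G$ is of odd type when $\charac K \neq 2$ and of even type when $\charac K = 2$; in particular $G$ has even or odd type.

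The main obstacle is ruling out nilpotent Borels. Suppose toward a contradiction that $B$ is a nilpotent Borel. By Remark~\ref{rem.SimpleMaxAbelianRankFour} it is nonabelian of rank $2$, so by Fact~\ref{fact.rankTwoGroups} it is a $p$-group of exponent $p$ or $p^2$ with finite but nontrivial center $Z(B)$. Corollary~\ref{cor.SimpleMaxCenterRankFour} then yields a $2$-transitive action of $G$ on $X := N(B)\backslash G$ with $\rk X = 2$, stabilizer $G_x = N(B)$ satisfying $G_x^\circ = B$, and finite $2$-point stabilizers. This is an \emph{almost} sharply $2$-transitive configuration, and I expect the genuine difficulty of the whole proposition to reside exactly here: one must convert the abundance of involutions (which lie inside the point stabilizers in even type, and transversally to the $p$-Borels via the torus in odd type) into a contradiction. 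I would carry out the involution-geometry argument in the spirit of \cite[Proposition~11.71]{BoNe94}, analyzing fixed-point sets of involutions on $X$ together with products of pairs of involutions, aiming either to produce a proper nontrivial normal subgroup of the simple group $G$ or to violate the Brauer--Fowler bound $\rk G \le \rk C(r) + 2\,\rk C(i)$ of Fact~\ref{fact.BrauerFowler}. Either outcome is impossible, so no nilpotent Borel exists; consequently every Borel is nonnilpotent, and therefore of rank exactly $2$.

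Finally I would establish self-normalization. Let $B$ be any Borel, now known to be nonnilpotent of rank $2$ with torus $T_B \cong K^\times$. Since $N(B)^\circ$ is a connected subgroup containing $B$, it equals $B$, so $N(B)/B$ is finite. By Fact~\ref{fact.rankTwoGroups}(3) every finite-order automorphism of $B$ is inner, so $\mathrm{Out}(B)$ is torsion-free; as the image of $N(B)$ in $\mathrm{Out}(B)$ is finite, it is trivial, giving $N(B) = B\,C(B)$. A short connectedness-and-rank argument (using that $B\,C^\circ(B)$ is a connected subgroup of rank $2$ containing $B$) shows $C(B)$ is finite, and since $T_B$ is a maximal decent torus it is self-centralizing by Lemma~\ref{lem.maxTori}, whence $C(B) \le C(T_B) = T_B \le B$. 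Therefore $N(B) = B\,C(B) = B$, completing the proof.
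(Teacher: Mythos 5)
Your proposal has a genuine gap at exactly the point you yourself identify as the crux: you never prove that nilpotent Borel subgroups do not exist. The passage ``I would carry out the involution-geometry argument in the spirit of \cite[Proposition~11.71]{BoNe94}, \dots aiming either to produce a proper nontrivial normal subgroup \dots or to violate the Brauer--Fowler bound'' is a research plan, not an argument, and it is not clear it can be executed at this stage: in the paper the incidence-geometry argument is reserved for the final proof of Theorem~\ref{thm.A}, and it only works after much more structure (odd type, $\m_2 G = 1$, the behaviour of strongly real elements) has been established --- all of which depends on this very proposition. What the paper actually does here is a \emph{generosity} argument: a maximal decent torus $T$ is self-centralizing, hence generous; if $A$ were a nonabelian nilpotent rank-$2$ subgroup, then $A$ would also be generous, because for $g \notin N(A)$ any nontrivial $a \in A \cap A^g$ would give $C^\circ(a) \ge \langle Z^\circ(A), (Z^\circ(A))^g \rangle$, which is abelian of rank $2$ by Lemma~\ref{lem.RankTwoAbelian}(2), contradicting Remark~\ref{rem.SimpleMaxAbelianRankFour}. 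Two generous subgroups must meet along a generic element, producing a nontrivial $a \in A$ whose centralizer contains both the unipotent group $Z^\circ(A)$ and a self-centralizing torus; since $A$ has exponent $p$ or $p^2$, that torus is $K^\times$ with $\charac K = p$ and yet must contain the $p$-element $a$ --- a contradiction. Relatedly, your reduction contains a second oversight: from ``$B$ is a nilpotent Borel'' you conclude ``by Remark~\ref{rem.SimpleMaxAbelianRankFour} it is nonabelian of rank $2$,'' but the Remark does not exclude an \emph{abelian Borel of rank $1$} (this is precisely what happens in a rank-$3$ bad group). The paper spends a separate paragraph ruling out rank-$1$ Borels, again by generosity; your proposal is silent on this case.

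Two further comments. Your self-normalization step is also flawed: Fact~\ref{fact.rankTwoGroups}(3) says finite-order \emph{automorphisms} of $B$ are inner, but a finite-order element of $\aut(B)/\mathrm{Inn}(B)$ need not lift to a finite-order automorphism, so ``the outer automorphism group is torsion-free'' does not follow, and with it the identity $N(B) = B\,C(B)$ collapses. The paper avoids this by a Frattini argument on a maximal torus $T \le B$: lifting torsion modulo the \emph{divisible} group $T$ produces an honest finite-order element $n \in N_{N(B)}(T) - T$, to which Fact~\ref{fact.rankTwoGroups}(3) genuinely applies. On the positive side, your treatment of the type of $G$ is essentially correct and your mixed-type argument (a connected nilpotent rank-$2$ group built from a $2$-unipotent subgroup and a $2$-torus normalizes a nontrivial decent torus, hence is abelian, contradicting Remark~\ref{rem.SimpleMaxAbelianRankFour}) is arguably more direct than the paper's appeal to conjugacy of maximal decent tori --- though you should replace the possibly nondefinable $P^\circ = U * S$ by the definable connected group $U \cdot d(S)$, where $d(S)$ is the definable hull of the $2$-torus.
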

\begin{proof}
We first work to show that every definable connected rank $2$ subgroup of $G$ is nonnilpotent. 
Let $T$ be a maximal decent torus contained in some Borel subgroup. Since $T$ is self-centralizing, $T$ is generous in $G$, i.e. $\bigcup T^G$ is  generic in $G$. We refer to \cite[IV,~Section~1]{ABC08} for generalities on generosity. Now assume that $G$ has a definable connected nilpotent (and nonabelian) subgroup $A$ of rank $2$; we show that $A$ is also generous. First, note that $A$ is almost self-normalizing by rank considerations. Thus, it suffices to show that $A\cap A^g$ is trivial for every $g\in G - N(A)$, and by Corollary~\ref{cor.SimpleMaxCenterRankFour}, we already know that $A\cap A^g$ is finite. Let $g\in G - N(A)$, and assume that $a \in A\cap A^g$ is nontrivial.  Let $Z$ be the connected center of $A$; $Z$ is unipotent of rank $1$ by Fact~\ref{fact.rankTwoGroups}. Now, $C^\circ(a)$ contains $\langle Z,Z^g \rangle$, so $C^\circ(a)$ is abelian by Lemma~\ref{lem.RankTwoAbelian}. As $C^\circ(a)$ has rank $2$, this contradicts Remark~\ref{rem.SimpleMaxAbelianRankFour}, so we conclude that $A$ is generous.

Since $T$ and $A$ are generous, there is a nontrivial $a\in A$ such that $C(a)$ contains a decent torus. Of course, $C(a)$ also contains the unipotent subgroup $Z$. Thus, $C(a)$ has rank $2$, and since $C^\circ(a)$ cannot be abelian, it must be nonnilpotent. By Fact~\ref{fact.rankTwoGroups}, $A$ has exponent $p$ or $p^2$ for some prime $p$. Thus $Z$ is an elementary abelian $p$-group, so the tori in $C(a)$ are isomorphic to $K^\times$ for some algebraically closed field $K$ of characteristic $p$. As the tori in $C(a)$ are self-centralizing, they contain the $p$-element $a$, a contradiction. We conclude that every connected subgroup of $G$ of rank $2$ is nonnilpotent.

Next, assume that some connected rank $1$ subgroup $A$ of $G$ is a Borel. Since every nontrivial decent torus is properly contained in a Borel, $A$ is not a decent torus. We claim that $A$ is generous, and as before, it suffices to show that $A\cap A^g$ is trivial for every $g\in G - N(A)$. Let $g\in G - N(A)$, and assume that $a \in A\cap A^g$ is nontrivial. Then $C^\circ(a)$ is equal to $\langle A,A^g \rangle$. Since $A$ is not a decent torus, we find that $C^\circ(a)$ is rank $2$ and abelian. This is a contradiction, so $A$ is generous.  Thus, as before, there is a nontrivial $a\in A$ such that $C(a)$ contains a decent torus, and of course, $C(a)$ also contains $A$. This is a contradiction. Thus, every Borel subgroup of $G$ has rank $2$. 

We now address self-normalization. Let $B$ be a Borel subgroup of $G$. Let $T$ be a maximal decent torus of $B$, and recall that $T$ is self-centralizing by Lemma~\ref{lem.maxTori}. Using the conjugacy of maximal decent tori in $B$, a Frattini argument yields $N(B) = BN_{N(B)}(T)$, so we need only show that $N_{N(B)}(T) \subset B$. Suppose not. Since $T$ is a decent torus, $N(T)/C(T) =  N(T)/T$ is finite. Lifting torsion from $N(T)/T$ (see \cite[I,~Lemma~2.18]{ABC08}), we find an $n$ in $N_{N(B)}(T) - T$ of finite order, and by Fact~\ref{fact.rankTwoGroups}, $n$ acts on $B$ as an inner automorphism. Again using Fact~\ref{fact.rankTwoGroups}, it is not hard to see that $T$ is self-normalizing in $B$, so it must be that $n$ centralizes $T$. Since $T$ is self-centralizing in $G$, we have a contradiction. Thus, $B$ is self-normalizing.

At this point, it is also clear that $G$ is not of degenerate type, so it remains to show that $G$ cannot have mixed type.  Suppose that $G$ is of mixed type. Let $S$ be the definable closure of a $2$-torus, and let $U$ be a $2$-unipotent subgroup. We know that both $S$ and $U$ are properly contained in (different) Borel subgroups. As $S$ is a maximal decent torus, $S$ is conjugate to a decent torus normalizing $U$, but the latter torus is without $2$-torsion.
\end{proof}

As $G$ has no definable subgroups of rank $3$, the previous proposition implies that \emph{every definable subgroup of $G$ of rank $2$ is connected.}

\begin{corollary}\label{cor.torality}
Let $S \subset G$. If $\rk C(S) = 2$, then $S$ is toral.
\end{corollary}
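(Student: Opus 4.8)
The plan is to show that $C(S)$ is forced to be a nonnilpotent Borel subgroup and that $S$ then lands inside its maximal decent torus. First I would note that $C(S)$ is definable even though $S$ is only a subset: in a group of finite Morley rank the descending chain condition on centralizers gives $C(S) = C(S_0)$ for some finite $S_0 \subseteq S$. Since $\rk C(S) = 2$, the remark following Proposition~\ref{prop.SimpleRankTwoSubRankFour} tells us $C(S)$ is connected, and as $G$ has no definable subgroups of rank $3$, it is a maximal proper definable connected subgroup, i.e. a Borel subgroup $B$. By Proposition~\ref{prop.SimpleRankTwoSubRankFour}, $B$ is nonnilpotent, so Fact~\ref{fact.rankTwoGroups} lets me write $B = B' \rtimes T$ with $T$ a nontrivial decent torus, necessarily of rank $1$ since $\rk B' = 1$.

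Next I would verify that this $T$ is self-centralizing in $G$, hence a maximal decent torus. By Fact~\ref{fact.centTori}, $C(T)$ is connected, and it contains $T$. If its rank were $2$, then $C(T)$ would itself be a Borel normalizing the nontrivial decent torus $T$, so Lemma~\ref{lem.RankTwoAbelian} would make it abelian of rank $2$, contradicting Remark~\ref{rem.SimpleMaxAbelianRankFour}. Thus $\rk C(T) = 1$, and connectedness forces $C(T) = T$; alternatively, one can simply invoke Lemma~\ref{lem.maxTori} once $T$ is recognized as a maximal decent torus.

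To finish, I would exploit the elementary inclusion $S \subseteq C(C(S))$: every element of $S$ commutes with all of $C(S) = B$, so $S \subseteq C(B)$. Since $T \le B$, this gives $S \subseteq C(B) \subseteq C(T) = T$, placing $S$ inside a decent torus, so $S$ is toral. I expect the only genuinely delicate points to be the definability of $C(S)$ for an arbitrary subset and the self-centralizing property of the torus of $C(S)$; both are routine given the preceding results, so the proof should be quite short.
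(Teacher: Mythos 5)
Your proof is correct and follows essentially the same route as the paper: identify $C(S)$ as a (nonnilpotent) Borel subgroup via Proposition~\ref{prop.SimpleRankTwoSubRankFour}, locate a self-centralizing maximal decent torus $T$ inside it, and conclude $S \subseteq C(C(S)) \subseteq C(T) = T$. The extra details you supply (definability of $C(S)$ via the chain condition on centralizers, and the re-derivation of $C(T)=T$, which is already contained in Lemma~\ref{lem.maxTori}) are correct but not needed beyond what the paper's one-line proof cites.
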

\begin{proof}
If $\rk C(S) = 2$, then Proposition~\ref{prop.SimpleRankTwoSubRankFour} implies that $C(S)$ contains a maximal decent torus $T$, and as $T$ is self-centralizing, $S \subset T$.
\end{proof}

\begin{corollary}\label{cor.normalizerUni}
Every unipotent subgroup is contained in a unique Borel subgroup.
\end{corollary}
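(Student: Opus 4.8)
The plan is to first pin down the rank of $U$, then show that $U$ is normal in every Borel subgroup containing it, and finally derive uniqueness of the Borel from maximality and simplicity. Throughout I take $U$ to be a nontrivial unipotent subgroup. Since $U$ is connected and nontrivial, $\rk U \ge 1$; and since $U$ is nilpotent (hence solvable) while $G$ is simple and nonabelian, $U$ is a proper subgroup. By Fact~\ref{fact.Hru} and simplicity, $G$ has no definable subgroups of rank $3$, so any proper definable connected subgroup has rank at most $2$; moreover a connected subgroup of rank exactly $2$ is maximal and is therefore a Borel subgroup, which is nonnilpotent by Proposition~\ref{prop.SimpleRankTwoSubRankFour}. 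As $U$ is nilpotent, this forces $\rk U = 1$. Existence of a containing Borel is then immediate: $U$ is a proper definable connected solvable subgroup, so it lies inside some maximal such subgroup, i.e.\ a Borel.

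The crux is the claim that if $B$ is a Borel subgroup with $U \le B$, then $U \trianglelefteq B$. By Proposition~\ref{prop.SimpleRankTwoSubRankFour}, $B$ is nonnilpotent of rank $2$, and in particular $B$ is nonabelian. Applying the contrapositive of Lemma~\ref{lem.RankTwoAbelian}(2), $B$ cannot contain two distinct unipotent subgroups of rank $1$; hence $U$ is the \emph{unique} unipotent subgroup of rank $1$ in $B$. Since conjugation by any $b \in B$ is an automorphism of $B$ that preserves connectedness, nilpotence, rank, and the absence of a nontrivial decent torus, the subgroup $U^b$ is again a unipotent rank-$1$ subgroup of $B$, so uniqueness gives $U^b = U$. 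Thus $U \trianglelefteq B$. I expect this identification of $U$ as the unique unipotent rank-$1$ subgroup of $B$ to be the main point: everything hinges on leveraging the nonnilpotence of $B$ through Lemma~\ref{lem.RankTwoAbelian}(2), which is what lets us upgrade mere containment to normality without having to analyze the internal structure $B = B' \rtimes T$ directly.

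Finally I would deduce uniqueness. Suppose $U \le B_1$ and $U \le B_2$ with $B_1, B_2$ Borel subgroups. By the claim, $U$ is normal in each, so $\langle B_1, B_2 \rangle \le N(U)$. Now $N(U)^\circ$ is a definable connected subgroup containing the Borel $B_1$; since $B_1$ is a maximal proper definable connected subgroup, either $N(U)^\circ = B_1$ or $N(U)^\circ = G$. The latter would give $N(U) = G$, making $U$ a nontrivial proper normal subgroup of $G$ and contradicting simplicity. Hence $N(U)^\circ = B_1$. As $B_2$ is connected and contained in $N(U)$, we get $B_2 \le N(U)^\circ = B_1$, and since both are connected of rank $2$ this yields $B_2 = B_1$, completing the proof. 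The endgame here is routine; the only real work is the normality claim in the second paragraph.
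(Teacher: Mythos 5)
Your proof is correct and takes essentially the same route as the paper: show $U$ is normal in every Borel subgroup containing it using the structure of connected rank-$2$ groups, then use simplicity to force $N(U)$ to have rank at most $2$, so that the unique Borel containing $U$ is (the connected component of) $N(U)$. Your appeal to Lemma~\ref{lem.RankTwoAbelian}(2) is just an explicit instantiation of what the paper summarizes as ``the structure of rank $2$ groups implies that $U$ is normal in every Borel subgroup containing it.''
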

\begin{proof}
Let $U$ be unipotent. As every Borel subgroup has rank $2$, the structure of rank $2$ groups implies that $U$ is normal in every Borel subgroup containing it. As $G$ is simple, the normalizer of $U$ has rank at most $2$,  so the normalizer is the unique Borel containing $U$.
\end{proof}

We now use the Brauer-Fowler Theorem to find Borel subgroups of a particular form. 

\begin{lemma}\label{lem.BF}
If $G$ has even type, then the centralizer of some strongly real element has rank $2$, and if $G$ has odd type, then the centralizer of any involution has rank $2$. Thus, some Borel subgroup of $G$ has a nontrivial center.
\end{lemma}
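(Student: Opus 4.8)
The plan is to run Brauer--Fowler (Fact~\ref{fact.BrauerFowler}) as the engine and to exploit the self-normalization of Borel subgroups (Proposition~\ref{prop.SimpleRankTwoSubRankFour}) to pin down centralizer ranks. First I would record the basic bounds: since $G$ is simple and nonabelian we have $Z(G)=1$, so for any nontrivial $x\in G$ the centralizer $C(x)$ is a proper definable subgroup and hence has rank at most $2$; moreover, when $\rk C(x)=2$, the group $C^\circ(x)$ is a Borel subgroup. Given an involution $i$, Fact~\ref{fact.BrauerFowler} produces a nontrivial strongly real $r$ with $4\le \rk C(r)+2\,\rk C(i)$. As both centralizers have rank at most $2$, this forces $\rk C(i)\ge 1$, and if $\rk C(i)=1$ then necessarily $\rk C(r)=2$.

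The key auxiliary observation I would isolate is that \emph{any strongly real $r$ with $\rk C(r)=2$ is an involution}. Indeed, set $B:=C^\circ(r)$, a Borel subgroup, and let $t$ be an involution inverting $r$, which exists by strong reality. Since $C(r)=C(r^{-1})$, conjugation by $t$ sends $B=C^\circ(r)$ to $C^\circ(r^{-1})=B$, so $t\in N(B)=B\le C(r)$; thus $t$ centralizes $r$, and combined with $trt=r^{-1}$ this gives $r=r^{-1}$. This is exactly where self-normalization of Borels does the work.

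For the odd type case I would use that involutions are toral (Fact~\ref{fact.torality}, since odd type excludes infinite elementary abelian $2$-groups) and conjugate: every involution lies in a maximal decent torus, these are conjugate, and each is isomorphic to $K^\times$ with a single involution by Lemma~\ref{lem.maxTori}. Now suppose some involution $i$ had $\rk C(i)=1$. Then Brauer--Fowler yields a strongly real $r$ with $\rk C(r)=2$, which by the auxiliary observation is an involution; being conjugate to $i$, it would give $\rk C(i)=\rk C(r)=2$, a contradiction. Hence $\rk C(i)=2$ for every involution.

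For the even type case I would instead fix a rank-$1$ $2$-unipotent subgroup $U\cong K^+$ (characteristic $2$), whose nontrivial elements are commuting involutions, and take an involution $i\in U$; since $i$ commutes with another involution of $U$, it is strongly real. If $\rk C(i)=2$ then $i$ is the required strongly real element, and otherwise $\rk C(i)=1$ and Brauer--Fowler hands us a strongly real $r$ with $\rk C(r)=2$. In both types I then have an element $x$ with $\rk C(x)=2$; setting $B:=C^\circ(x)$, the element $x$ centralizes $B$ and so lies in $N(B)=B$, giving $x\in Z(B)\setminus\{1\}$ and a Borel with nontrivial center. The main obstacle is the odd type uniformity --- upgrading ``some involution has a rank-$2$ centralizer'' to ``every involution does'' --- which I expect to resolve through conjugacy of involutions together with the auxiliary observation, with the self-normalization of Borel subgroups serving as the linchpin throughout.
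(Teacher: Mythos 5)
Your proof is correct, and while it runs on the same Brauer--Fowler engine as the paper, your decisive mechanisms are genuinely different. For even type, the paper simply invokes Corollary~\ref{cor.torality}: a rank-$2$ centralizer forces torality, and in even type no involution is toral, so the inequality pushes the rank-$2$ centralizer onto the strongly real element $r$; you instead manufacture a strongly real involution from a pair of commuting involutions and split into cases. (One blemish there: the assertion that a rank-$1$ $2$-unipotent subgroup is isomorphic to $K^+$ is unjustified---no field is available at this point---but it is also unnecessary, since all you use is a pair of distinct commuting involutions, and these exist because the connected Sylow $2$-subgroup in even type is an infinite nilpotent $2$-group of bounded exponent whose center contains infinitely many involutions.) For odd type, the paper reaches the same contradiction as you but by a heavier route: assuming all involutions have rank-$1$ centralizers, it notes that $C(r)$ is nonabelian, so the inverting involution $j$ has infinite centralizer in $C(r)$, forcing $C^\circ(j)\le C(r)$, and then torality ($j\in C^\circ(j)$, Fact~\ref{fact.torality}) gives $j\in C(r)$ and $r^2=1$. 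Your auxiliary observation---the inverting involution normalizes $C^\circ(r)$, hence lies in it by self-normalization of Borel subgroups (Proposition~\ref{prop.SimpleRankTwoSubRankFour}), hence centralizes $r$---achieves this with no appeal to torality, to the type of $G$, or to the standing rank-$1$ assumption, so it is cleaner and strictly more general. Indeed it proves more than the lemma asks: combined with the paper's even-type step (no involution has a rank-$2$ centralizer in even type, by Corollary~\ref{cor.torality}), your observation shows that no strongly real element can have one either, so Brauer--Fowler would rule out even type on the spot---a conclusion the paper only obtains later, via conjugacy of Borel subgroups and Lemma~\ref{lem.TwoTransTwoPointStab}. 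Your closing step, that an element $x$ with $\rk C(x)=2$ lies in $N(C^\circ(x))=C^\circ(x)$ and hence in $Z(C^\circ(x))$, is exactly the paper's implicit final ``Thus.''
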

\begin{proof}
By Fact~\ref{fact.BrauerFowler}, we find an involution $i$ and a strongly real element $r$ such that $\rk C(r) + 2\cdot\rk C(i) \ge 4$. Thus, $i$ or $r$ must have a centralizer of rank $2$. If $G$ has even type,  Corollary~\ref{cor.torality} shows that no involution has a rank $2$ centralizer, so in this case, $C(r)$ has rank $2$. Now assume $G$ has odd type and that some involution has a centralizer of rank $1$. Clearly $G$ has Pr\"ufer $2$-rank equal to $1$, so Fact~\ref{fact.torality} implies that all involutions are conjugate. Thus, every involution has a rank $1$ centralizer, so $C(r)$ has rank $2$. Let $j$ be an involution inverting $r$, and note that $j$ normalizes $C(r)$. Since $C(r)$ is not abelian, $j$ centralizes some infinite subgroup of $C(r)$, so $C(r)$ contains the connected centralizer of $j$. The connected centralizer of $j$ contains $j$ by Fact~\ref{fact.torality}, so $j$ centralizes $r$. This implies that $r$ is an involution, which contradicts the fact that $\rk C(r)=2$. 
\end{proof}

Thus, $G$ is guaranteed to have some Borel subgroup $B$ with a nontrivial center, and we may invoke Corollary~\ref{cor.SimpleMaxCenterRankFour} to see that $G$  has a $2$-transitive action on $B\backslash G$. From this point of view, the next lemma  is about the $2$-point stabilizers in such an action. 

\begin{lemma}\label{lem.TwoTransTwoPointStab}
Let $B$ be a Borel subgroup of $G$ with a nontrivial center. If $g\notin B$, then $H:=B\cap B^g$ is finite, \emph{nontrivial}, and toral in $B$ with $C(H)$ of rank $2$. Further, $C(H) \cap B$ has rank $1$, so $C(H)$ and $B$ are nonconjugate Borel subgroups.
\end{lemma}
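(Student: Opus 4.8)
The plan is to read $H$ off the $2$-transitive action. By Proposition~\ref{prop.SimpleRankTwoSubRankFour}, $B$ is self-normalizing, so $N(B)\backslash G = B\backslash G$, and Corollary~\ref{cor.SimpleMaxCenterRankFour} makes the action of $G$ on $X := B\backslash G$ $2$-transitive. Put $x := B$ and $y := Bg$; since $g\notin B$ these are distinct points, and $H = B\cap B^g$ is precisely the two-point stabilizer $G_{x,y}$. Finiteness is then a rank count: $X$ is connected of rank $2$, so the ordered pairs of distinct points are generic in $X^2$ of rank $4 = \rk G$ and, by $2$-transitivity, form a single orbit; additivity of rank forces $\rk H = 0$.

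For nontriviality I would argue by contradiction: if $H = 1$, then every two-point stabilizer is trivial and the action is sharply $2$-transitive. The essential observation is that $B$ has an involution. In odd type the involutions of $G$ are toral (Fact~\ref{fact.torality}), so the maximal decent torus $T\cong K^\times$ of $B$ (Lemma~\ref{lem.maxTori}) contains the involution $-1$; in even type $T$ has no $2$-torsion, forcing $\charac K = 2$, whence the unipotent radical $B'\cong K^+$ consists of involutions. Either way $B = G_x$ has an involution, placing us in the ``characteristic $\neq 2$'' regime of sharply $2$-transitive groups, where \cite[Proposition~11.71]{BoNe94} produces a normal complement to $B$ in $G$ --- impossible since $G$ is simple. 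I regard this dichotomy as the conceptual heart of the statement, as it is exactly what excludes the clean sharply $2$-transitive configuration.

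The main technical obstacle is showing $H$ is toral, for which it suffices to see $H\cap B' = 1$. Suppose $1\neq h\in H\cap B'$; then $h$ has order $p := \charac K > 0$. As the torus of $B^g$ carries no $p$-torsion, every $p$-element of $B^g$ lies in $(B')^g$, so $h\in (B')^g$ too. Both rank-$1$ unipotent groups $B'$ and $(B')^g$ then centralize $h$, so by Lemma~\ref{lem.RankTwoAbelian} the rank-$2$ group $C^\circ(h)$ is abelian, contradicting Remark~\ref{rem.SimpleMaxAbelianRankFour} --- unless $B' = (B')^g$, which by Corollary~\ref{cor.normalizerUni} would force $g\in N(B') = B$. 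Hence $H\cap B' = 1$, so $H$ injects into $B/B'\cong T\cong K^\times$ with order prime to $\charac K$; a coprime complement-conjugacy argument then conjugates $H$ into $T$. In particular $H$ is abelian and toral in $B$ (and, symmetrically, in $B^g$).

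Finally I would compute $C(H)$. Being toral, $H$ is centralized by the rank-$1$ torus containing it, so $C(H)\cap B$ has rank at least $1$; it cannot have rank $2$, for that would give $H\le Z(B)$, and then, since every element of $H\le B^g$ has order dividing $|Z(B^g)|$, Lemma~\ref{lem.RankTwoNonnilCenter} would place $H\le Z(B^g)$ as well, contradicting $Z(G_x)\cap Z(G_y) = 1$ from Lemma~\ref{lem.DefPrim}. So $C(H)\cap B$ has rank exactly $1$, and symmetrically $C(H)\cap B^g$ has rank $1$; these are distinct, since their intersection lies in the finite group $H$, so together they generate a subgroup of $C(H)$ of rank at least $2$. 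As $G$ has no rank-$3$ subgroup and $C(H)\neq G$ (else $H\le Z(G) = 1$), we conclude $\rk C(H) = 2$, so $C(H)$ is a Borel meeting $B$ in rank $1$. Nonconjugacy is then forced by comparing centers: $H\le Z(C(H))$ while $H\not\le Z(B)$, so if $C(H)$ were a conjugate of $B$ then $|H|$ would divide $|Z(B)|$ and Lemma~\ref{lem.RankTwoNonnilCenter} would again yield $H\le Z(B)$, a contradiction.
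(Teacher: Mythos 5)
Your proposal is correct, and its first half --- finiteness by the rank count on the $2$-transitive action, and nontriviality by excluding sharp $2$-transitivity via an involution in $B$ and \cite[Proposition~11.71]{BoNe94} --- coincides exactly with the paper's proof. The two technical steps are done differently. For torality, the paper argues element-by-element: a nontrivial $h\in H$ has infinite centralizer in each of $B$ and $B^g$, and these pieces cannot coincide since $B\cap B^g$ is finite, so $\rk C(h)=2$ and Corollary~\ref{cor.torality} (set up for exactly this purpose) makes $h$ toral at once; hence $H$ misses the unipotent radical, embeds in a torus quotient, is cyclic, and lies in a torus of $B$. You instead prove $H\cap B'=1$ directly --- your unipotent-intersection argument via Lemma~\ref{lem.RankTwoAbelian}, Remark~\ref{rem.SimpleMaxAbelianRankFour}, and Corollary~\ref{cor.normalizerUni} is sound, granting $B'\cong K^+$, which does hold because $Z(B)\le T$ forces $B'\cap Z(B)=1$ --- and then invoke coprime complement conjugacy (vanishing of $H^1(H,B')$ by unique $|H|$-divisibility of $K^+$) to push $H$ into a torus. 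That is a genuinely different, and correct, route; its cost is an external cohomological fact that the paper's choice of lemmas avoids. For nonconjugacy the paper is more economical: $C(H)\cap B$ contains a torus of $B$ and is therefore infinite, whereas $B$ meets each conjugate $B^k$, $k\notin B$, in a finite two-point stabilizer. Your center-comparison via Lemmas~\ref{lem.RankTwoNonnilCenter} and~\ref{lem.DefPrim} also works (and explicitly covers the degenerate possibility $C(H)=B$), but it is longer. In sum: same skeleton and same conceptual heart, with correct but distinct mechanisms for the torality and nonconjugacy steps, the paper's being the slicker of the two.
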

\begin{proof}
By Corollary~\ref{cor.SimpleMaxCenterRankFour} and Proposition~\ref{prop.SimpleRankTwoSubRankFour}, the action of $G$ on $X:=B\backslash G$ is $2$-transitive, and by rank considerations, the $2$-point stabilizers in this action are finite. Thus, $H$ is finite. Now, if $H=1$, then $G$ is \emph{sharply} $2$-transitive on $X$. By Proposition~\ref{prop.SimpleRankTwoSubRankFour}, $B$ certainly contains an involution, so we may apply \cite[Proposition~11.71]{BoNe94} to see that $G$ splits, a contradiction.

We now show that $H$ is toral in $B$. Let $h\in H$ be nontrivial. Then $h$ must have an infinite centralizer in each of $B$ and $B^g$. Since $H$ is finite, $C(h)$ must have rank $2$, so $h$ is toral by Corollary~\ref{cor.torality}. In particular, we find that $H$ intersects the unipotent radical of $B$ trivially, so $H$ embeds into a decent torus. Thus, $H$ is cyclic, so $H$ is contained in any torus of $B$ that contains a generator of $H$. 

Finally, we note that $C(H)$ contains two distinct tori coming from $B$ and $B^g$, so $C(H)$ has rank $2$. As we observed early on that the intersection of $B$ with any of its conjugates is finite,  $C(H)$ is not a conjugate of $B$.
\end{proof}

\begin{corollary}
$G$ has odd type.
\end{corollary}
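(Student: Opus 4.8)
The plan is to rule out even type by contradiction, so suppose toward a contradiction that $G$ has even type. By Lemma~\ref{lem.BF} there is a nontrivial strongly real element $r$ with $\rk C(r) = 2$, and since $r$ is strongly real it is inverted by some involution $j$, i.e. $j r j^{-1} = r^{-1}$. My first aim is to record that $r$ is genuinely non-involutory. In even type every involution is $2$-unipotent and hence not toral, so Corollary~\ref{cor.torality} shows that no involution can have a centralizer of rank $2$. As $\rk C(r) = 2$, the element $r$ is not an involution, and together with $r \ne 1$ this gives $r^{-1} \ne r$.

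Next I would analyze $B_0 := C^\circ(r)$. It is a connected definable subgroup of rank $2$, hence a Borel subgroup (a maximal proper definable connected subgroup), and by Proposition~\ref{prop.SimpleRankTwoSubRankFour} it is nonnilpotent. Let $T_0$ be a maximal decent torus of $B_0$; by Lemma~\ref{lem.maxTori} it is self-centralizing, so $C(T_0) = T_0$. Since $B_0 = C^\circ(r) \le C(r)$, the element $r$ centralizes $B_0$, i.e. $r \in C(B_0)$. As $T_0 \le B_0$, anything centralizing $B_0$ centralizes $T_0$, so $C(B_0) \le C(T_0) = T_0 \le B_0$, whence $C(B_0) \le Z(B_0)$; in particular $r \in Z(B_0)$.

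Finally I would play off the two descriptions of how $j$ acts on $B_0$. On one hand, conjugation by $j$ normalizes $B_0$, because $j C(r) j^{-1} = C(r^{-1}) = C(r)$ and conjugation preserves connected components. On the other hand, this conjugation is an automorphism of $B_0$ of finite (indeed order dividing $2$) order, so by Fact~\ref{fact.rankTwoGroups}(3) it is \emph{inner}. But an inner automorphism fixes the center $Z(B_0)$ pointwise, so $j r j^{-1} = r$, contradicting $j r j^{-1} = r^{-1} \ne r$. This contradiction rules out even type, and since Proposition~\ref{prop.SimpleRankTwoSubRankFour} already restricts $G$ to even or odd type, $G$ has odd type. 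The one step that requires care—and the crux of the argument—is locating $r$ inside $Z(B_0)$, so that the inner-automorphism rigidity of the center can be brought to bear; this is precisely what converts the mild information ``$j$ inverts $r$'' into an outright contradiction.
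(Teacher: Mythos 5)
Your proof is correct, but it takes a genuinely different route from the paper's. The paper argues globally: assuming even type, Proposition~\ref{prop.SimpleRankTwoSubRankFour} and Fact~\ref{fact.rankTwoGroups} force every Borel subgroup to be the normalizer of a connected Sylow $2$-subgroup, so conjugacy of Sylow $2$-subgroups makes all Borels conjugate; Lemma~\ref{lem.BF} then supplies a Borel with nontrivial center, and Lemma~\ref{lem.TwoTransTwoPointStab} --- whose proof rests on the $2$-transitive coset action and the sharply $2$-transitive splitting theorem --- produces two \emph{nonconjugate} Borels, a contradiction. You instead argue locally at the strongly real element $r$ furnished by Lemma~\ref{lem.BF}: you place $r$ in $Z(C^\circ(r))$ via the self-centralizing torus, and then invoke the rigidity clause of Fact~\ref{fact.rankTwoGroups} (finite-order automorphisms of a nonnilpotent connected rank-$2$ group are inner) to force the inverting involution $j$ to centralize $r$, which is absurd since $r$ is nontrivial and, by Corollary~\ref{cor.torality}, not an involution in even type. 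Your route avoids both the Sylow conjugacy theorem and Lemma~\ref{lem.TwoTransTwoPointStab} entirely, at the cost of some centralizer bookkeeping; it is, pleasingly, the same inner-automorphism trick the paper itself uses to prove self-normalization of Borels inside Proposition~\ref{prop.SimpleRankTwoSubRankFour}. One small elision: you apply Lemma~\ref{lem.maxTori} to a maximal decent torus $T_0$ of $B_0$, whereas the lemma concerns maximal decent tori of $G$; this is harmless, since $T_0$ has rank $1$, decent tori are connected, and by Remark~\ref{rem.SimpleMaxAbelianRankFour} every decent torus of $G$ has rank at most $1$, so any decent torus containing $T_0$ equals $T_0$ and $T_0$ is indeed maximal in $G$.
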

\begin{proof}
Assume that $G$ has even type. By Proposition~\ref{prop.SimpleRankTwoSubRankFour} and Fact~\ref{fact.rankTwoGroups}, the Borel subgroups of $G$ are precisely the normalizers of connected Sylow $2$-subgroups of $G$, so the conjugacy of connected Sylow $2$-subgroups implies that all Borel subgroups are conjugate. By Lemma~\ref{lem.BF}, $G$ has a Borel subgroup with nontrivial center, so the Lemma~\ref{lem.TwoTransTwoPointStab} provides a contradiction.  
\end{proof}

\subsection{Strongly real elements}
We now freely and frequently use that $G$ has odd type. The end game, which will come soon, exploits the action of $G$ on its set of involutions $I$. Since $G$ must have Pr\"ufer $2$-rank equal to $1$, Fact~\ref{fact.torality} implies that this action is transitive, and our work in the previous subsection, together with Corollary~\ref{cor.SimpleMaxCenterRankFour}, shows that the action is in fact $2$-transitive. However, we will also need some information about strongly real elements of $G$. 

We begin with a lemma. Recall that the $2$-rank of $G$, denoted $\m_2 G$, is the maximal dimension over $\operatorname{GF}(2)$ of an elementary abelian $2$-subgroup.

\begin{lemma}\label{lem.mTwo}
The $2$-rank of $G$ is $1$, so no involution is strongly real.
\end{lemma}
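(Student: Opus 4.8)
The plan is to show that $G$ has $2$-rank exactly $1$, i.e. that $G$ contains no Klein four-group. Since we have just established that $G$ has odd type, $G$ has a nontrivial $2$-torus, and since $G$ has no definable abelian subgroup of rank larger than $1$ (Remark~\ref{rem.SimpleMaxAbelianRankFour}), the Pr\"ufer $2$-rank is forced to be $1$; in particular the involutions all lie in conjugate $2$-tori and are pairwise conjugate (via Fact~\ref{fact.torality}). The content of the lemma is therefore to rule out the existence of a four-group $V = \{1, i, j, k\}$ of commuting involutions. Given the final clause, I would then observe that the absence of a four-group immediately yields that no involution is strongly real: an involution $r$ is strongly real exactly when it is inverted by some involution $j \neq r$, and since $r^j = r^{-1} = r$ for an involution, such a $j$ would commute with $r$, producing a four-group $\langle r, j\rangle$.

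The key step is thus the nonexistence of the four-group, and here I would argue by centralizer rank. Suppose $V \leq G$ is elementary abelian of order $4$. Each involution $i \in V$ has, by Fact~\ref{fact.torality} applied to odd type, a centralizer $C(i)$ containing a $2$-torus and hence infinite; by Corollary~\ref{cor.torality} (or directly from the classification of subgroup ranks), $\rk C(i) \in \{1,2\}$, and since $i$ is toral, $C(i)$ contains a decent torus of rank $1$. I would next examine $C(V) = \bigcap_{i \in V} C(i)$, which contains $V$ and is centralized by three commuting involutions. The aim is to derive a contradiction with Remark~\ref{rem.SimpleMaxAbelianRankFour} or with the self-centralizing property of maximal tori from Lemma~\ref{lem.maxTori}: the three distinct involutions of $V$ lie in (possibly distinct) maximal tori, but any two commuting involutions lying in a common self-centralizing torus $T \cong K^\times$ would force $V \leq T$, and $K^\times$ has a \emph{unique} involution, contradicting $|V| = 4$. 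So the real work is to show each involution of $V$ lies in a common torus, or else to produce a rank-$2$ abelian subgroup from $C(V)$ that violates Remark~\ref{rem.SimpleMaxAbelianRankFour}.

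Concretely, I would pick two distinct commuting involutions $i, j \in V$ and consider $C(i)$. By the odd-type structure established above (and Proposition~\ref{prop.SimpleRankTwoSubRankFour}, since $\rk C(i) = 2$ would make $C^\circ(i)$ a nonnilpotent Borel), $C^\circ(i)$ contains a unique maximal decent torus $T_i \cong K^\times$, which is self-centralizing in $G$ by Lemma~\ref{lem.maxTori}. Now $j$ centralizes $i$, so $j \in C(i)$, and since $j$ is a $2$-element and $C^\circ(i)$ is nonnilpotent of rank $2$, the torality results force $j$ into a maximal torus of $C^\circ(i)$, hence into a conjugate of $T_i$ — and by self-centralization $j \in T_i$. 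But then both $i$ and $j$ lie in the single torus $T_i \cong K^\times$, which contains a unique involution, so $i = j$, the desired contradiction.

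The main obstacle I anticipate is the step placing the second involution $j$ inside the \emph{same} torus as $i$: a priori $j$ need only lie in $C(i)$, and I must use that $C^\circ(i)$ has rank $2$ and nonnilpotent structure to see that all its $2$-elements are toral and sit in a unique self-centralizing torus. If $\rk C(i) = 1$ instead (the case where $i$ has a small centralizer), the torus $T_i$ has rank $1$ and the same self-centralization argument via Lemma~\ref{lem.maxTori} and Corollary~\ref{cor.torality} still applies, but I would need to handle it in parallel; the uniqueness of the involution in $K^\times$ is what ultimately collapses the four-group in either case. The transition to the final clause about strong reality is then purely formal, as noted above.
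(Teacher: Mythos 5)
Your overall strategy---reduce to a pair of commuting involutions $i\neq j$, place $j$ inside $C(i)$, and use the structure of that rank-$2$ group to find a single copy of $K^\times$ containing both involutions, whence $i=j$ since $K^\times$ has a unique involution---is the same reduction the paper makes (the paper is briefer: after noting via Lemma~\ref{lem.BF} and Proposition~\ref{prop.SimpleRankTwoSubRankFour} that $C(i)$ is nonnilpotent, connected, of rank $2$, it simply cites Fact~\ref{fact.rankTwoGroups} for $\m_2 C(i)=1$). But your execution has a genuine error at the decisive step. A nonnilpotent connected group of rank $2$ does \emph{not} contain a unique maximal decent torus: by Fact~\ref{fact.rankTwoGroups} it is $B'\rtimes T$ with $B/Z(B)\cong K^+\rtimes K^\times$, and the maximal tori form an infinite conjugacy class --- the conjugates of $T$ under $B'$ are pairwise distinct precisely because the group is nonnilpotent (a normal $T$ would give $[B',T]\le B'\cap T=1$ and nilpotence). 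Consequently, torality only places $j$ in \emph{some} maximal torus $T'$ of $C(i)$, a conjugate of $T_i$, and your inference ``and by self-centralization $j\in T_i$'' is a non sequitur: self-centralization of $T_i$ says nothing about an element that merely lies in a conjugate of $T_i$. The repair is easy but runs in the opposite direction from your argument: since $i\in Z(C(i))$, $i$ centralizes $T'$, and $T'$ is a maximal decent torus of $G$ (it has rank $1$, the maximum possible for an abelian subgroup by Remark~\ref{rem.SimpleMaxAbelianRankFour}), hence self-centralizing by Lemma~\ref{lem.maxTori}; therefore $i\in T'$, and now $i$ and $j$ lie in the common torus $T'\cong K^\times$, which has a unique involution.

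A second gap is your parallel case $\rk C(i)=1$, which is not in fact handled by ``the same argument'': there $C^\circ(i)$ is a rank-$1$ torus containing $i$, and $j$ need only \emph{normalize} it; a priori $j$ could invert $C^\circ(i)$ while still centralizing its unique involution $i$, so self-centralization yields no contradiction. This case should instead be eliminated outright: Lemma~\ref{lem.BF} states that in odd type \emph{every} involution has a centralizer of rank $2$, so the rank-$1$ case never occurs, and you should cite that rather than attempt a parallel treatment. With these two repairs your proof closes and becomes essentially the paper's argument written out in more detail; the final deduction that no involution is strongly real (an inverting involution $j\neq r$ of an involution $r$ actually commutes with $r$, giving a four-group) is correct and matches the paper.
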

\begin{proof}
If $i$ and $j$ are commuting involutions of $G$, then $j\in C(i)$. By Proposition~\ref{prop.SimpleRankTwoSubRankFour} and Lemma~\ref{lem.BF}, $C(i)$ is nonnilpotent and connected of  rank $2$. By Fact~\ref{fact.rankTwoGroups}, $\m_2 C(i) = 1$, so $i=j$.
\end{proof}

Before we proceed, note that $2$-transitivity of $G$ on $I$ implies that the strongly real elements of $G$ are all conjugate.

\begin{lemma}
Let $r\in G$ be strongly real. Then $C^\circ(r)$ is unipotent of rank $1$ with $r\in C^\circ(r)$, and consequently, $r$ lies in a unique Borel subgroup.
\end{lemma}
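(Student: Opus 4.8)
The plan is to pin down the rank and the structure of $C(r)$ using the involution that inverts $r$, and then to locate $r$ inside the unipotent radical of a Borel. Fix a strongly real $r\neq 1$, so there is an involution $i\neq r$ with $iri^{-1}=r^{-1}$. By Lemma~\ref{lem.mTwo}, $r$ is not an involution, hence $r\neq r^{-1}$ and $i\notin C(r)$; note that $i$ normalizes $C(r)$ and $C^\circ(r)$. Since $G$ has no definable subgroups of rank $3$, we have $\rk C(r)\in\{0,1,2,4\}$, and rank $4$ is impossible as it would force $r\in Z(G)=1$. I would rule out rank $2$ at once: there $C^\circ(r)$ is a connected rank $2$ subgroup, hence a Borel, which is self-normalizing by Proposition~\ref{prop.SimpleRankTwoSubRankFour}; but then $i\in N(C^\circ(r))=C^\circ(r)\le C(r)$, contradicting $i\notin C(r)$.

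To eliminate rank $0$ I would use that, by $2$-transitivity of $G$ on $I$, all nontrivial strongly real elements are conjugate, so this set is exactly $r^G$. If $\rk C(r)=0$ then $r^G$ is generic of rank $4$. On the other hand, a maximal decent torus $T$ is self-centralizing by Lemma~\ref{lem.maxTori}, hence generous, so $\bigcup T^G$ is generic; two generic definable subsets of the connected group $G$ must meet, so some conjugate $r^g$ lies in a conjugate of $T$ and therefore has $\rk C(r^g)\ge 1$, contradicting $\rk C(r^g)=\rk C(r)=0$. Thus $\rk C(r)=1$, and $C^\circ(r)$ is connected of rank $1$, hence abelian and either unipotent or a decent torus $S$ (necessarily maximal, since maximal decent tori have rank $1$). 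In the latter case $S\cong K^\times$ by Lemma~\ref{lem.maxTori} and, by odd type together with the torality of involutions (Fact~\ref{fact.torality}) and the conjugacy of maximal decent tori, $S$ contains an involution $\iota$, which is its unique involution. Since $i$ normalizes $S$ it fixes $\iota$, so $i$ and $\iota$ commute, and Lemma~\ref{lem.mTwo} forces $i=\iota\in S\le C(r)$, again contradicting $i\notin C(r)$. Hence $C^\circ(r)$ is unipotent of rank $1$.

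Next I would show $r\in C^\circ(r)$. As $r$ centralizes $C^\circ(r)$, we have $r\in C(C^\circ(r))\le N(C^\circ(r))$, and by Corollary~\ref{cor.normalizerUni} this normalizer is the unique Borel $B_0$ containing $C^\circ(r)$; moreover $C^\circ(r)$ is the unipotent radical $(B_0)'$. Write $B_0=(B_0)'\rtimes T_0$ with $T_0\cong K^\times$ (Fact~\ref{fact.rankTwoGroups}). Because $i$ normalizes $C^\circ(r)$ we get $i\in B_0$, so $i$ is the unique involution of $B_0$, lying in $T_0$, with $i\notin Z(B_0)$ (otherwise $i$ would centralize $r$). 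Thus $i$ acts on $(B_0)'\cong K^+$ by inversion. Now $r\in C_{B_0}((B_0)')=(B_0)'\cdot Z(B_0)$, so write $r=vz$ with $v\in (B_0)'$ and $z\in Z(B_0)$ central; applying $i$ gives $v^{-1}z=iri^{-1}=r^{-1}=v^{-1}z^{-1}$, whence $z^2=1$. As $z\in Z(B_0)$ while the unique involution $i$ of $B_0$ is not central, $z=1$, so $r=v\in (B_0)'=C^\circ(r)$.

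Finally, for uniqueness of the Borel, $r\in C^\circ(r)$ lies in the unique Borel $B_0$ over $C^\circ(r)$; for any Borel $B_1\ni r$ I would show $C^\circ(r)\le B_1$, forcing $B_1=B_0$ by Corollary~\ref{cor.normalizerUni}. If $r\notin (B_1)'$ then $r$ has nontrivial image in the torus quotient $B_1/(B_1)'$, and the structure of rank $2$ groups shows $r$ is $B_1$-conjugate to an element whose connected centralizer contains a nontrivial decent torus; this contradicts that $C^\circ(r)$, and hence each of its conjugates, is unipotent. Therefore $r\in (B_1)'$, and since $(B_1)'$ is abelian it lies in $C^\circ(r)$, giving $(B_1)'=C^\circ(r)$ and $B_1=B_0$. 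I expect this last step to be the main obstacle, as it requires careful bookkeeping of the semisimple versus unipotent part of $r$ inside an arbitrary Borel—particularly in positive characteristic, where $r$ may be a torsion element—whereas the earlier steps follow cleanly from self-normalization of Borels, generosity of the torus, and the $2$-rank being $1$.
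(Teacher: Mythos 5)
Your first step---pinning $\rk C(r)=1$ and ruling out the decent-torus case---is essentially the paper's own argument (generosity of a maximal torus against rank $0$, self-normalization of Borels against rank $2$, the unique involution of a torus against the toral case). Where you genuinely diverge is in the last two steps. The paper proves $r\in C^\circ(r)$ indirectly: all nontrivial strongly real elements are conjugate, and Lemma~\ref{lem.TwoTransTwoPointStab} produces one convenient strongly real element $s$ lying in the unipotent radical of $C(C(i)\cap C(j))$, which is inverted by $i$; uniqueness of the Borel is then obtained by noting that $C_B(r)$ is infinite for any Borel $B$ containing $r$, so $C^\circ(r)\le B$ and $B=N(C^\circ(r))$. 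Your direct computation inside $B_0=N(C^\circ(r))$ is more self-contained (it avoids Lemma~\ref{lem.TwoTransTwoPointStab} and the conjugacy of strongly real elements), but it contains two errors, one easily repaired and one a genuine gap.

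The repairable one: ``$i$ is the unique involution of $B_0$'' is false. By Fact~\ref{fact.rankTwoGroups}, $B_0/Z(B_0)\cong K^+\rtimes K^\times$, which has a rank-$1$ family of involutions (the elements $(a,-1)$), so $B_0$ has infinitely many involutions, pairwise noncommuting since $\m_2 G=1$; nor need $i$ lie in the chosen complement $T_0$. What your computation actually requires is only that $Z(B_0)$ contain no involution, and that follows from Lemma~\ref{lem.mTwo}: an involution $z\in Z(B_0)$ would commute with $i\in B_0$, forcing $z=i$, whereas $i\notin Z(B_0)$. With that substitution, your derivation $r=vz$, $z^2=1$, $z=1$, $r\in (B_0)'=C^\circ(r)$ is correct.

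The genuine gap is in the uniqueness step. The claim ``if $r\notin (B_1)'$ then $r$ is $B_1$-conjugate to an element whose connected centralizer contains a nontrivial decent torus'' fails precisely when $r\in (B_1)'Z(B_1)\setminus (B_1)'$, i.e.\ when the image of $r$ in $B_1/Z(B_1)\cong K^+\rtimes K^\times$ lies in $K^+$ but $r$ has a nontrivial finite central component; this case is not vacuous a priori, since Borel subgroups here can have nontrivial finite centers (Lemma~\ref{lem.BF} even produces such a Borel). For such an $r$, every $B_1$-conjugate again lies in $(B_1)'Z(B_1)$ and its connected centralizer is exactly $(B_1)'$, which is unipotent, so no contradiction arises. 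The repair is short, because this ``bad'' case is in fact the good case: if the image of $r$ lies in $K^+$, then $(B_1)'$ centralizes $r$, so by rank $C^\circ(r)=(B_1)'\le B_1$, which is exactly the inclusion $C^\circ(r)\le B_1$ you wanted, and then $B_1=N(C^\circ(r))=B_0$ by Corollary~\ref{cor.normalizerUni}; your conjugation-into-a-torus argument is needed only when the image lies outside $K^+$. Alternatively, one can use the paper's shortcut: every element of $K^+\rtimes K^\times$ has infinite centralizer, so pulling back over the finite center, $C_{B_1}(r)$ is infinite, whence $C^\circ(r)=C^\circ_{B_1}(r)\le B_1$ at once. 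Your closing worry about tracking the ``semisimple versus unipotent part'' of $r$ was well placed---the overlooked central component is exactly where your case split leaks.
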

\begin{proof}
First, towards a contradiction, assume that $A:=C^\circ(r)$ has rank $0$. In this case, the set $r^G$ is generic in $G$. However, as mentioned in the proof of Proposition~\ref{prop.SimpleRankTwoSubRankFour}, if $T$ is a maximal decent torus of $G$, then $\bigcup T^G$ is  generic. As $G$ is connected,  $r^G$ and $\bigcup T^G$ must intersect nontrivially. Since the elements of $r^G$ have finite centralizers and the elements of $\bigcup T^G$ do not, we have a contradiction, so $\rk A>0$. Now assume that $A$ has rank $2$. Write $r:=ij$ for involutions $i$ and $j$. Certainly $i$ normalizes $A$, so $i \in A$ by the self-normalization of Borel subgroups. This implies that $i$ commutes with $r$, hence with $j$, and this contradicts the fact that $\m_2 G = 1$. Thus, $\rk A = 1$. 

Now, assume that $A$ is a decent torus. As $i$ normalizes $A$, $i$ must centralize the unique involution of $A$. Since $\m_2 G = 1$, $i\in A$, but the same clearly also holds for $j$. As $i\neq j$, $A$ must be unipotent.

To prove that $r\in C^\circ(r)$, it suffices to prove it for some strongly real $s$. Let $H:=C(i) \cap C(j)$. We know that $C(H)$ contains $i$, and by Lemma~\ref{lem.TwoTransTwoPointStab}, it has rank $2$. As $i$ is not central in $C(H)$, $i$ inverts the unipotent radical of $C(H)$. Thus, every $s$ in this unipotent radical is strongly real, and clearly $s\in C^\circ(s)$.

For the final point, observe that $C^\circ(r)$, hence $r$, must be contained in some Borel subgroup by Proposition~\ref{prop.SimpleRankTwoSubRankFour}, and this Borel must be $N(C^\circ(r))$. Now if an arbitrary Borel $B$ contains $r$, it must be that $C_B(r)$ is infinite. Thus, $C^\circ(r) < B$, so $B = N(C^\circ(r))$.
\end{proof}

The previous lemma yields the following essential ingredient for our proof of Theorem~\ref{thm.A}.

\begin{lemma}\label{lem.stronglyRealBorel}
Let $B$ be a Borel subgroup of $G$.  If some nontrivial element of $B$ is inverted by an involution $i$, then $i\in B$. 
\end{lemma}
\begin{proof}
Suppose that some nontrivial element $r$ of $B$ is inverted by an involution $i$. If $r=i$, there is nothing to show, so we may assume that $r$ is strongly real. In this case, $B$ is the unique Borel subgroup containing $r$,  so $i$ normalizes $B$. By the self-normalization of Borel subgroups, $i\in B$.
\end{proof}

\subsection{The proof of Theorem~\ref{thm.A}}
\begin{proof}[Proof of Theorem~\ref{thm.A}]
We continue to assume that $G$ is a simple group of rank $4$ with a definable subgroup of rank $2$. Thus, all of the results from this section apply to $G$, and we are aiming for a contradiction. Our approach is inspired by \cite[Proposition~11.71]{BoNe94}. We build a point-line geometry. 

Let $\points$ be the set of involutions of $G$. For distinct $i,j\in \points$ define the line through $i$ and $j$ to be $\ell_{ij} := \{ k\in \points : (ij)^k = ji\}$, and set $\lines := \{\ell_{ij} : i,j \in \points \text{ with } i \neq j\}$. A point is incident with a line precisely when it is contained in the line. Note that $\lines$ can be identified with the set of strongly real elements modulo the relation that identifies two strongly real elements if and only if they define the same line. Let us give another characterization of $\ell_{ij}$. Set $r:= ij$, and let $\ell := \ell_{r}$. For $k \in \points$, we claim that $k\in \ell$ if and only if $k \in N(C^\circ(r))$. Clearly every $k$ in $\ell$ is in $N(C^\circ(r))$. Now, as $i\in N(C^\circ(r))$ and $i$ inverts $r$, we see that $N(C^\circ(r))$ does not have a central involution. Thus every involution of $N(C^\circ(r))$ inverts $C^\circ(r)$, which contains $r$, and the claim holds. Additionally, this allows us to see that the (setwise) stabilizer of $\ell$ is $G_\ell = N(C^\circ(r))$.

We now gather some basic information about the geometry. We already know that $G$ acts $2$-transitively on  $\points$, with $\points$ connected of rank $2$, and $G$ acts transitively on $\lines$ as well. Thus, with our above observation that $G_\ell = N(C^\circ(r))$, we find that $\lines$ is also connected of rank $2$, and it is not hard to see that the rank of the point-row  $\points(\ell)$ is $1$. We now claim that two distinct points $i$ and $j$ lie on a unique line, namely $\ell_{ij}$. Suppose that $i,j \in \ell_s$ for $s$ strongly real. Then $i,j \in  N(C^\circ(s))$, so $r:= ij$ is in $N(C^\circ(s))$ as well. Of course, $r$ must have an infinite centralizer in $N(C^\circ(s))$, so $C^\circ(r)  = C^\circ(s)$. Thus, $\ell_{ij} = \ell_s$, so two distinct points lie on a unique line. This can be used to define an equivalence relation on $\points - \{i\}$ by $p\sim q$ if and only if $\ell_{ip} = \ell_{iq}$. As $\rk \points(\ell) = 1$, we find that the line-pencil $\lines(i)$ has rank $1$ as well. We summarize our findings.
\begin{enumerate}
\item Both $\points$ and $\lines$ are connected of rank $2$.
\item Every point-row and every line-pencil has rank $1$. 
\item Two distinct points lie on a unique line.
\end{enumerate}

Now, fix a line $\ell$. Let $\lines_0$ be the set of lines intersecting  $\ell$, and let $\lines_1$ be the set of lines not intersecting $\ell$. We will show that $\lines_0$ and $\lines_1$ both have full rank in $\lines$, and this will be our contradiction. We begin by computing the rank of $\lines_0$. Since distinct points lie on a unique line and every line-pencil has rank $1$, we  conclude that $\rk \lines_0 = \rk \points(\ell) + 1 = 2$. Now let $\points_1$ be the set of points that do not lie on $\ell$. To compute the rank of $\lines_1$, we show that the definable function $\points_1\rightarrow \lines:i\mapsto \ell^i$ has range in $\lines_1$ and is injective.  Let $i$ and $j$ be arbitrary distinct points not lying on $\ell$. We claim that $\ell^i \cap \ell = \emptyset$ and $\ell^i \neq \ell^j$. Write $\ell = \ell_s$ for $s$ strongly real. If $\ell^i$ and $\ell$ intersect, then either they intersect in a unique point centralized by $i$ or the lines are identical. As $\m_2 G = 1$, it must be that $\ell^i=\ell$, so $i \in G_\ell$. As observed above, this implies that $i\in \ell$, so we conclude that $\ell^i \cap \ell = \emptyset$. Next, assume that $\ell^i = \ell^j$. Then $ij \in G_\ell$, so  Lemma~\ref{lem.stronglyRealBorel} implies that $i,j\in G_\ell$. This is again a contradiction, so $\ell^i \neq \ell^j$. In summary, we have a definable injective function from the rank $2$ set $\points_1$ to $\lines_1$, so  $\lines_1$ must also have rank $2$.
\end{proof}

\section{Connected groups of rank $4$}\label{sec.corollaryA}
We now address Corollary~\ref{cor.A}. The case when $F(G)$ has rank $2$ is simply Fact~\ref{fact.rankTwoGroups}. Also, we note that the final statement of Corollary~\ref{cor.A} follows from the fact that simple bad groups do not have involutory automorphisms  (see \cite[Theorem~13.3]{BoNe94}) together with the main result of \cite{BBC07}. 

\begin{setupCA}
Let $G$ be a connected group of rank $4$.
\end{setupCA}

\begin{proposition}
If $\rk F(G) = 0$, then either
\begin{enumerate}
\item $G$ is a quasisimple bad group, or
\item $G$ has a normal quasisimple bad subgroup of rank $3$. 
\end{enumerate}
\end{proposition}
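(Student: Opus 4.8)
The plan is to first extract the global structure forced by $\rk F(G)=0$, and then branch according to whether $G$ has an infinite proper definable connected normal subgroup. The first observation is that $G$ is nonsolvable and that its connected solvable radical $R$ is trivial. Indeed, if $R\neq 1$, then the Fitting subgroup $F(R)$ is a nontrivial (hence positive-rank) normal nilpotent subgroup of $G$, so $\rk F(G)\ge 1$; the same reasoning with $R=G$ shows $G$ is nonsolvable. Since $Z(G)^\circ\le R=1$, the center $Z(G)$ is finite.

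If $G$ has \emph{no} infinite proper definable connected normal subgroup, I would argue as follows. Finite normal subgroups of the connected group $G$ are central, so every proper definable normal subgroup is contained in the finite group $Z(G)$; hence $G/Z(G)$ is simple of rank $4$. As $G/Z(G)$ is simple and nonabelian, $G'Z(G)=G$, and since $G'$ is definable and connected with $Z(G)$ finite we get $G'=G$, so $G$ is quasisimple. Theorem~\ref{thm.A} then shows $G/Z(G)$ is a bad group, and therefore $G$ is a quasisimple bad group, giving case~(1).

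Otherwise, fix an infinite proper definable connected normal subgroup $N$. Because $R=1$, $N$ is nonsolvable; since connected groups of rank at most $2$ are solvable (rank $1$ groups are abelian and Fact~\ref{fact.rankTwoGroups} handles rank $2$), we get $\rk N\ge 3$, and properness forces $\rk N=3$. The connected solvable radical of $N$ is characteristic in $N$, hence normal in $G$, hence trivial, so exactly as above $N$ is quasisimple with $N/Z(N)$ simple of rank $3$. By the classification of simple groups of rank $3$ recalled in the introduction, $N/Z(N)$ is either $\psl_2(K)$ or a bad group, so the whole problem reduces to excluding $N/Z(N)\cong\psl_2(K)$; ruling this out delivers case~(2).

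The main obstacle is precisely this exclusion. First I would show $C_G(N)$ is finite: it is normal with $C_G(N)\cap N=Z(N)$ finite, so $\rk C_G(N)=\rk\bigl(C_G(N)N\bigr)-\rk N\le 4-3=1$, and if $\rk C_G(N)=1$ then $C_G(N)^\circ$ would be a nontrivial connected solvable normal subgroup, contradicting $R=1$. Thus conjugation yields a faithful action of the rank-$4$ group $G/C_G(N)$ on $N$, inducing an action on $N/Z(N)\cong\psl_2(K)$. The crux is the rigidity of $\psl_2(K)$: over an algebraically closed field it has no definable outer automorphisms of positive rank (the diagram $A_1$ has no graph symmetry, $\pgl_2(K)=\psl_2(K)$, and field automorphisms are not definable), so the image of $G$ in $\aut(\psl_2(K))$ lies in $\operatorname{Inn}(\psl_2(K))\cong\psl_2(K)$ of rank $3$, while the kernel of $G\to\aut(\psl_2(K))$ has connected component inside the finite group $C_G(N)$. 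This forces $\rk G\le 3$, the desired contradiction. I expect the justification that $\psl_2(K)$ admits no positive-rank definable outer automorphisms inside a finite-Morley-rank overgroup to be the step requiring the most care, and I would pin it down with an explicit citation to the structure of automorphisms of $\psl_2(K)$.
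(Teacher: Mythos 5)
Your proof is correct in substance but organized differently from the paper's. The paper obtains the key normal quasisimple subgroup in one stroke: since $F^\circ(G)=1$, \cite[Proposition~7.3]{ABC08} produces a component $Q$ (definable, subnormal, quasisimple), which is normal because $G$ is connected. Your case split on whether $G$ has an infinite proper definable connected normal subgroup replaces this appeal to component/generalized-Fitting theory with elementary arguments (trivial connected solvable radical, finite normal subgroups of a connected group are central, definability of $G'$), which is more self-contained but longer; note also that the rank~$3$ classification you invoke is not literally ``recalled in the introduction'' but is derived in the paper's proof on the fly from Fact~\ref{fact.Hru} ($Q$ either has no definable corank~$1$ subgroup, hence is bad, or $Q/Z(Q)$ is of the form $\psl_2$). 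From there the two arguments share the same skeleton: Theorem~\ref{thm.A} disposes of the rank~$4$ case, and the rank~$3$ case reduces to excluding $\psl_2$, which both you and the paper do via rigidity of quasisimple algebraic groups inside groups of finite Morley rank. The paper quotes this as $Q\cong\pssl_2(K)$ and $G=QC(Q)$ (\cite[II,~Corollary~2.26 and Proposition~3.1]{ABC08}) and then contradicts $\rk F(G)=0$ via $C^\circ(Q)\le F(G)$; you run the very same fact in the contrapositive direction (the image of $G$ in the automorphism group is inner, hence of rank at most $3$, while the kernel has finite connected component), so this is a reorganization rather than a new idea. One caveat: your parenthetical justification of rigidity is false in positive characteristic, since the Frobenius map \emph{is} a definable field automorphism and induces a definable outer automorphism of $\psl_2(K)$; the statement you actually need is not ``field automorphisms are not definable'' but that a \emph{connected} group acting definably on $\psl_2(K)$ induces only inner automorphisms, which is precisely \cite[II,~Proposition~3.1]{ABC08} --- the citation you flagged you would supply, and the one the paper uses. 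With that citation in place your argument closes.
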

\begin{proof}
Since $F^\circ(G)$ is trivial, \cite[Proposition~7.3]{ABC08} ensures that $G$ has a component $Q$, i.e. $Q$ is subnormal and quasisimple. Additionally, $Q$ is definable, and as $G$ is connected, $Q$ is normal. Certainly, $Q$ must have rank at least $3$, and $Z(Q)$ is finite. If $Q$ has rank $4$, then $G=Q$, and Theorem~\ref{thm.A} applies to $G/Z(G)$.

Now assume that  $Q$ has rank $3$. Then either $Q$ has no definable corank $1$ subgroups and $Q$ is a bad group, or $Q/Z(Q)$ is of the form $\psl_2$. However, in the latter case, we find that $Q$ is of the form $\pssl_2$ and $G=QC(Q)$, see \cite[II,~Corollary~2.26 and Proposition~3.1]{ABC08}. Certainly $C(Q)$ has rank $1$, so $C^\circ(Q) \le F(G)$ which is a contradiction. Thus, if $Q$ has rank $3$, then $G$ is an extension of a quasisimple bad group of rank $3$. 
\end{proof}

\begin{proposition}
If $\rk F(G) = 1$, then either
\begin{enumerate}
\item $G$ is a quasisimple bad group, or
\item $G = F(G) * Q$ for some quasisimple subgroup $Q$ of rank $3$.
\end{enumerate}
\end{proposition}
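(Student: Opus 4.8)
The plan is to mirror the structure of the rank-0 case that was just proved, but now working above a one-dimensional Fitting subgroup. Let me set up the general reduction and then locate the one nontrivial step.

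\begin{proof}[Proof proposal]
The plan is to run the same component-extraction argument as in the case $\rk F(G)=0$, but now modulo the one-dimensional Fitting subgroup. Set $F:=F(G)$, so $\rk F = 1$ and $F^\circ$ is a definable connected normal nilpotent subgroup of rank $1$. I would first pass to $\modu G := G/F^\circ$, which is connected of rank $3$. The Fitting subgroup of $\modu G$ has rank $0$: if it had positive rank, its preimage would be a normal nilpotent-by-(rank 1) subgroup of $G$ contradicting $\rk F(G)=1$ (this needs the standard fact that a normal subgroup whose image in $G/F^\circ$ is nilpotent, extended by the central-type $F^\circ$, is itself nilpotent, which follows from nilpotency of $F^\circ$ together with \cite[I,~Proposition~7.3]{ABC08} or a direct commutator computation). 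Thus $F^\circ(\modu G)=1$, and exactly as in the rank-$0$ proposition, \cite[Proposition~7.3]{ABC08} produces a definable normal quasisimple component $\modu Q \trianglelefteq \modu G$ of rank $3$, so $\modu Q = \modu G$ and $\modu G$ is quasisimple of rank $3$.

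Next I would lift $\modu Q$ back to $G$. Let $Q$ be the preimage of $\modu Q = \modu G$, so $Q=G$; the real content is to extract an actual quasisimple subgroup of $G$ of rank $3$ splitting off $F^\circ$. Here I would apply the theory of quasisimple-modulo-a-central-connected-subgroup: since $\modu G$ is quasisimple and $F^\circ$ is central in $G$ (note $F^\circ$ is connected nilpotent and normal, and as $\modu G$ is quasisimple it is centerless modulo its finite center, forcing $[G,F^\circ]$ to be trivial or finite, hence $F^\circ \le Z(G)$ by connectedness), the commutator subgroup $Q:=G' = [G,G]$ is the natural candidate. I would show $Q$ is definable (using that $\modu G$ is perfect, so $G = Q F^\circ$), connected, and that $Q\cap F^\circ$ is finite, giving $\rk Q = 3$ and $Q/(Q\cap F^\circ)\cong \modu G$ quasisimple. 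Then $G = F^\circ Q = F * Q$ once one checks $[F,Q]=1$, i.e. that $F$ is central, which follows since $F \le C(F^\circ)$ and $F/F^\circ$ is finite acting trivially on the quasisimple $Q$ modulo finite center.

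Finally I would branch on the structure of the rank-$3$ quasisimple group $\modu G$, exactly as in the rank-$0$ case: either $\modu G$ has no proper definable subgroup of corank $1$, in which case it is a bad group and, pulling back, $G$ itself is a quasisimple bad group (case (a), occurring when $F^\circ \le Z(Q)$ so that $Q=G$ and no genuine splitting happens); or $\modu G/Z(\modu G)$ is of the form $\psl_2(K)$, in which case \cite[II,~Corollary~2.26 and Proposition~3.1]{ABC08} gives $\modu G = \pssl_2(K)$ and the splitting $G = F * Q$ with $Q$ quasisimple of rank $3$ (case (b)). The main obstacle I anticipate is the central-splitting step in the middle paragraph: verifying that $F^\circ$ is genuinely central and that $G' $ gives a clean quasisimple complement of rank $3$ rather than merely a quasisimple section. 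This is precisely the point where one must invoke the structure theory of central extensions of quasisimple groups of finite Morley rank (the ``$G = Q * C(Q)$'' machinery of \cite[II]{ABC08}) to guarantee that the component lifts to an honest definable subgroup and that the central product decomposition holds on the nose.
\end{proof}
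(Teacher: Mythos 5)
Your first step---that $F(\modu{G})$ is finite, where $\modu{G}:=G/F^\circ$---rests on a ``standard fact'' that is false: a normal nilpotent subgroup with nilpotent quotient does not make the ambient group nilpotent ($K^+\rtimes K^\times$ contains the normal nilpotent subgroup $K^+$ with quotient $K^\times$, yet is not nilpotent). The fact does become true when $F^\circ$ is \emph{central} in the preimage, but that is exactly what you have not yet established: your centrality argument in the second paragraph is deduced from the quasisimplicity of $\modu{G}$, which is itself the conclusion of the first step, so the reasoning is circular. Moreover, even granting quasisimplicity of $\modu{G}$, the parenthetical ``forcing $[G,F^\circ]$ to be trivial or finite'' does not follow from $\modu{G}$ being centerless modulo its finite center; normality of $F^\circ$ only gives $[G,F^\circ]\le F^\circ$, and excluding a genuinely nontrivial action of $G$ on $F^\circ$ requires an argument, not an assertion. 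The place where the gap actually bites is the solvable case, which your proposal never excludes: if $G$ were solvable (and the proposition's conclusion fails for solvable $G$), then $\modu{G}$ is a nontrivial connected solvable group of rank $3$, whose Fitting subgroup is automatically infinite (its derived subgroup is connected and nilpotent), so your claimed finiteness of $F(\modu{G})$ is simply wrong in that case.

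This is precisely where the paper does its real work. It first proves $G$ is not solvable, by linearizing the action of a solvable $G$ on $F:=F^\circ(G)$ via \cite[Proposition~4.11]{ABC08} and contrasting the rank bound with $C^\circ(F)\le F$ from \cite[Proposition~7.3]{ABC08}. It then proves centrality of $F$ directly: choosing a noncentral $x\in F$, the orbit $x^G\subseteq F$ has rank $1$, Fact~\ref{fact.Hru} forces $G/N\cong \psl_2(K)$ for the kernel $N$ (rank $2$ or $3$ kernels would make $G$ solvable), with $N^\circ=F$; the connected preimages $A_1,A_2$ of two distinct unipotent subgroups of $G/N$ are rank-$2$ groups with unipotent rank-$1$ quotients, hence nilpotent, hence centralize $F$, and they generate $G$. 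Only then does the component machinery of \cite[Proposition~7.3]{ABC08} apply cleanly. A secondary issue: your final branching ties case (a) to ``$\modu{G}$ bad,'' but the correct dichotomy is whether the component equals $G$, i.e.\ whether $G$ is perfect. A group of the form $G=F\times Q$ with $Q$ a rank-$3$ quasisimple bad group would have $\modu{G}$ bad yet fall under case (b); conversely, in case (a) your intermediate claim that $G'\cap F^\circ$ is finite fails outright, since there $G=G'$ is quasisimple with infinite center, a configuration that \cite[II,~Proposition~3.1]{ABC08} rules out only for $\psl_2$, not for bad groups.
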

\begin{proof}
Set $F:= F^\circ(G)$. We claim that $G$ is not solvable. If $G$ is solvable, we may use \cite[Proposition~4.11]{ABC08} to linearize the action of $G$ on $F$, but as $F$ has rank $1$, the image of $G$ in $\emorph(F)$ has rank at most $1$. However, $G$ is solvable, so $C^\circ(F) \le F$ by \cite[Proposition~7.3]{ABC08}. Thus $G$ is not solvable, so $G/F(G)$ is either a simple bad group or of the form $\psl_2$.

We now show that $F$ is central. Suppose not. Then there is an $x\in F$ for which $x^G$ has rank $1$. Let $N$ be the kernel of $G$ acting on $x^G$. Of course $N$ has rank at most $3$, and if $N$ has rank $2$ or $3$, we  find that $G$ is solvable. Thus, Fact~\ref{fact.Hru} implies that $G/N \cong \psl_2(K)$ for some algebraically closed field $K$, and $N^\circ = F$. Let $A_1$ and $A_2$ be the connected components of the preimages in $G$ of $2$ distinct unipotent subgroups of $G/N$, and note that $G = \langle A_1,A_2\rangle$. Since each $A_i$ is a rank $2$ group with a rank $1$ unipotent quotient, it must be that both are nilpotent. Thus, $F$ is central in both $A_1$ and $A_2$, so $F$ is central in $G$. 

Now, we again appeal to \cite[Proposition~7.3]{ABC08} to see that $G$ must contain some component $Q$, and one finds that $G= F * Q$. It remains to show that $G$ is bad when $Q=G$. Assume $Q=G$. First note that if $G/Z(G)$ is of the form $\psl_2$, then \cite[II, Proposition~3.1]{ABC08} implies that $G$ has a finite center. Thus, it must be that $G/Z(G)$ is a simple bad group. Let $A$ be a proper connected subgroup of $G$. We aim to show that $A$ is nilpotent and conclude that $G$ is a bad group. Let $B:=AF$. If $B=G$, then $A$ is normal in $G$ contradicting the fact that $G$ is quasisimple. Since $G/Z(G)$ is a bad group of rank $3$, $B/F$ must have rank at most $1$. Thus, $B$ is nilpotent, and the same is true of $A$.   
\end{proof}

\section{Actions on sets of rank $2$}
Finally, we address Corollary~\ref{cor.B}.

\begin{proof}[Proof of Corollary~\ref{cor.B}]
Let $G$ satisfy the hypotheses of Corollary~\ref{cor.B}, and let $X$ be a definable set of rank $2$ on which $G$ acts faithfully, definably, transitively, and generically $2$-transitively. Fix $x\in X$. Generic $2$-transitivity implies that $G$ has involutions, so by Corollary~\ref{cor.A}, $Z:=F^\circ(G)$ has rank $1$. Since $Z$ is central and the action is faithful, $G_x \cap Z = 1$, and we see that $B:=ZG_x$ has rank $3$. By Corollary~\ref{cor.A}, $G = Z* Q$ with $Q$ quasisimple of the form $\pssl_2$.

Since $Z$ is normal, the orbits of $Z$ on $X$ determine a quotient $\modu{X}$ of $X$ where the stabilizer of $xZ$ is $B$. Let $N$ be the kernel of the action on $\modu{X}$, and note that $Z$ is contained in the kernel. Since groups of rank $2$ are solvable and $G$ is nonsolvable, the only possibility is that $N$ has rank $1$. By Fact~\ref{fact.Hru},  $G/N$ is of the form $\psl_2$, and the action of $Q/Z(Q)$ on $\modu{X}$ is  equivalent to that of $\psl_2(K)$ on $\proj^1(K)$ for some algebraically closed field $K$. 

Next we show that $Z\cong K^\times$. Set $H:=G^\circ_x$. By generic $2$-transitivity, $G$ is generated by $H$ and any generic conjugate of $H$, so $H$ is not contained in $Q$. Now $H/(H\cap N) \cong HN/N$, and as the latter group is equal to a rank $2$ subgroup of $G/N\cong \psl_2(K)$, the structure of $\psl_2(K)$ implies that $H/(H\cap N)$ is isomorphic to a Borel subgroup of $\psl_2(K)$. Now, $H\cap Q$ is normal in $H$ and of rank $1$, so $H\cap Q$ contains the unipotent radical of $H$. Thus, $H/(H\cap Q) \cong K^\times$ by Lemma~\ref{lem.DivisibleFiniteQuotient}. Now, $H/(H\cap Q) \cong G/Q$, and the latter is isomorphic to $Z/Z\cap Q$. By Corollary~\ref{cor.DivisibleFiniteQuotient}, $Z\cong K^\times$.

It remains to show that $Z(G) = Z$. Let $T$ be a rank $1$ decent torus in $G_x$, and notice that $C(T) = C^\circ(T) =T \times Z$, using Fact~\ref{fact.centTori}. Since $T < G_x$, $T \cap Z(G)  = 1$, and we find that $Z(G)$ must be connected.
\end{proof}

\section*{Acknowledgments}
The author is grateful to Adrien Deloro for several valuable conversations about small groups. The author is also grateful to the referee for many  helpful recommendations that greatly improved the efficiency and clarity of the article. Additionally, the author would like to acknowledge the warm hospitality of Universit\"at M\"unster where  the results of the article where obtained. 

\bibliographystyle{alpha}
\bibliography{WisconsBib}
\end{document}